\pgfplotsset{compat=1.9}
\newtheorem{theorem}{Theorem}[section]
\newtheorem{corollary}{Corollary}[section]
\newtheorem{lemma}{Lemma}[section]
\DeclarePairedDelimiterX{\inp}[2]{\langle}{\rangle}{#1, #2}
\newcommand{\ip}[2]{\left\langle #1 , #2 \right\rangle}
\newcommand{\GLG}[1]{\mathrm{GL}_{#1}}
\newcommand{\OG}[1]{\mathcal{O}_{#1}}
\newcommand{\SM}[2]{\mathrm{St}_{#1,#2}}
\newcommand{\SP}{{\cal X}}
\newcommand{\cM}{{\cal M}}
\newcommand{\cN}{{\cal N}}
\newcommand{\cR}{{\cal R}}
\newcommand{\cS}{{\cal S}}
\newcommand{\cT}{{\cal T}}
\newcommand{\cV}{{\cal V}}
\newcommand{\cX}{{\cal X}}
\newcommand{\R}{\mathbb R}
\newcommand{\Int}[2]{{[#1 \!:\! #2]}}	
\newcommand{\trace}{\mathop{\mathrm{trace}}}
\newcommand{\vspan}{\mathop{\mathrm{span}}}
\newcommand{\rank}{\mathop{\mathrm{rank}}}
\newcommand{\ra}{\rightarrow}
\newcommand{\by}{\times}
\newcommand{\shalf}{{\textstyle\frac{1}{2}} }
\newcommand{\nhalf}{\nicefrac{1}{2}}
\newcommand{\vect}[1]{\mathbf{#1}}
\newcommand{\vzero}{\vect{0}}
\newcommand{\nomiss}{maximal}
\newcommand{\missatp}{not maximal}
\newcommand{\ve}{{\mathbf e}}
\newcommand{\WS}{{\cal X}}
\newcommand{\GLGorb}{\Theta}
\newcommand{\OGorb}{{\mathcal O}}
\newcommand{\vertt}[1]{{\left\vert\kern-0.25ex\left\vert\kern-0.25ex\left\vert #1 
    \right\vert\kern-0.25ex\right\vert\kern-0.25ex\right\vert}}
\newcommand{\GC}{\tilde G}
\newcommand{\HC}{\tilde H}
\title{The Landscape of Matrix Factorization Revisited}
\author{%
  Hossein~Valavi , Sulin Liu, and Peter J. Ramadge\\
  Department of Electrical Engineering, Princeton University\\
  Princeton, NJ 08544 \\
  \texttt{hvalavi, liu, ramadge @princeton.edu}
}
\begin{document}

\maketitle

\begin{abstract}
We revisit the landscape of the simple matrix factorization problem. 
For low-rank matrix factorization, prior work has shown that there exist infinitely many critical points all of which are either global minima or strict saddles. At a strict saddle the minimum eigenvalue of the Hessian is negative. Of interest is whether this minimum eigenvalue 
is uniformly bounded below zero over all strict saddles.
To answer this we consider orbits of critical points under 
the general linear group. For each orbit we identify a representative point, called a canonical point. If a canonical point is a strict saddle, 
so is every point on its orbit. We derive an expression for the minimum eigenvalue of the Hessian at each canonical strict saddle and use this to show that the minimum eigenvalue of the Hessian over the set of strict saddles is not uniformly bounded below zero. 
We also show that a known invariance property of gradient flow ensures the solution of gradient flow only encounters critical points on an invariant manifold $\cM_C$ determined by the initial condition.
We show that, in contrast to the general situation, the minimum eigenvalue of strict saddles in $\cM_\vzero$  is uniformly 
bounded below zero. We obtain an expression for this bound 
in terms of the singular values of the matrix being factorized.
This bound depends on the size of the nonzero singular values and on the separation between distinct nonzero singular values of the matrix.
\end{abstract}

\section{Introduction} \label{sec:intro}

Factor analysis  is a well known problem in machine learning and many methods have been introduced for posing and solving problems of this form. Some of the best known examples include 
Principal Component Analysis (PCA) \cite{hotelling1933analysis,hotelling1936,jolliffe2011principal}, 
Canonical Correlation Analysis (CCA) \cite{hardoon2004canonical}, 
Independent Component Analysis (ICA) \cite{hyvarinen2000independent},
Positive Matrix Factorization (PMF) \cite{paatero1994positive}, 
and Dictionary Learning~\cite{mairal2009online}.
All of these problems share the characteristic of being non-convex 
optimization problems over matrix arguments. 
While some of the problems can be solved using well established tools (e.g. SVD), 
others require the application of iterative solution methods such as gradient descent.
This motivates developing a better understanding the landscape of these problems. This has gained additional momentum recently because of a connection to representation learning in deep networks. 

In the late 1980's, Baldi and Hornik~\cite{baldi1989neural} examined the landscape question in the context of training a one hidden layer low-rank linear neural network (a linear auto-encoder). Their paper provides a characterization of the associated critical points, proves that these are either global minima or (strict) saddle points, and  shows that the global minimum value corresponds to the residual of the projection of the training data onto the subspace generated by the first principal vectors of a covariance matrix associated with the training patterns. 
This connection to PCA had been previously established by Bourlard in  \cite{Bourlard1988}. The results in \cite{baldi1989neural} are proved by vectorizing the relevant matrix differentials and seeking small perturbations around a critical point to make the objective smaller. 

Recently, there has been a surge of interest in characterizing the global landscape of the objective functions used in these types of problems. 
Chen et al.~\cite{chen2018landscape} study the landscape of 
the generalized eigenvalue problem and characterize the landscape by looking at the Hessian of the Lagrangian function by vectorizing the relevant differentials.
Li et al.~\cite{li2019symmetry} examine the landscape problem under the lens of invariant groups. In particular, using the properties of a rotational symmetry group, they  show that the parameter space can be divided into three regions. The first contains all strict saddle points, where the objective has a negative curvature, the second contains neighborhoods of all global minima, and the third is the complement of the first two regions. 
Mohammadi et al. \cite{mohammadi2018stability} examine the equilibrium points of the best rank-one approximation under the gradient flow. 
Ge et al. \cite{ge2017no} study the landscape of the matrix sensing, matrix completion and robust PCA problems, showing that for these problems all local minima are global minima.
Sun et al.~\cite{sun2018geometric,sun2015complete} study the phase retrieval problems and dictionary recovery.  
Boumal \cite{boumal2016nonconvex} studies phase synchronization. 
Much effort has also recently been devoted to studying the related problem of the optimization landscape of deep neural networks under simplified assumptions~
\cite{kawaguchi2016deep,nguyen2017loss,hardt2016identity,ge2017learning}.

Several papers have investigated provable guarantees for the local and global convergence properties of optimization on non-convex problems under gradient flow and gradient descent. A common theme in these papers is the concept of a strict saddle function introduced in \cite{ge2015escaping}. 
The main required properties are that the functions are twice-differentiable and that the minimum eigenvalue of the Hessian is positive at all local minima and negative at all other critical points. For such functions, with high probability, stochastic gradient descent (SGD) converges to a local minimum in a polynomial number of iterations.  
Several subsequent papers show that a wide class of non-convex functions are indeed strict saddle functions. 
For instance, orthogonal tensor decomporition \cite{ge2015escaping}, deep linear neural networks \cite{kawaguchi2016deep}, 
deep linear residual neural networks \cite{hardt2016identity}, 
matrix completeion \cite{ge2016matrix}, 
generalized phase retrieval problem \cite{sun2018geometric}, 
complete dictionary recovery over the sphere \cite{sun2015complete}, low-rank matrix recovery \cite{bhojanapalli2016global}, 
are all examples of problems that satisfy the strict saddle property. 
Lee et al. \cite{lee2016gradient} use the stable manifold theorem to show that for a twice continuously differentiable function with the strict saddle property, almost surely, gradient descent with random initialization converges to a local minimum or negative infinity.
Jin et al.~\cite{jin2017escape} show that for $l$-smooth and $\rho$-Hessian Lipschitz functions with the strict saddle property, with high probability, perturbed gradient descent (PGD) converges to a local minimum in a poly-logarithmic number of iterations.

Other papers have investigated the implicit constraints imposed by gradient flow in training over-parameterized models such as deep neural networks \cite{AroraICML2018a, DuNIPS2018}. Arora et al.~\cite{AroraICML2018a} considers over-parameterized multi-layer linear neural networks and shows that gradient flow implicitly balances the underlying factors. Du et al. \cite{DuNIPS2018} extends this result to fully-connected and convolutional linear sections of multi-layer neural networks.

This paper revisits the landscape of the simple matrix factorization (MF) problem. Prior results indicate that critical points are either global minima or strict saddles~\cite{baldi1989neural}. Our analysis is based on orbits of factorizations under the general linear group. After defining these orbits, we construct a representative point (called a canonical point) on each orbit of critical points. This construction exploits a known connection to PCA/SVD. We then obtain an expression for the minimum eigenvalue of the Hessian at each canonical strict saddle. 
This bound can be moved along a curve in the orbit to show that there is no uniform negative bound on the minimum eigenvalue of the 
Hessian over the orbit. This implies that the minimum eigenvalue of the Hessian is not uniformly bounded below zero over all strict saddles.  
We then analyze how an invariance property of gradient flow impacts the strict saddles that can be encountered. This natural invariant depends on the initial condition for gradient flow. We focus on one interesting form of the invariant. When the initial conditions start in this invariant manifold, the flow restricts the symmetry of the problem to the orthogonal group and this allows us to to give a uniform negative bound on the minimum eigenvalue of the Hessian map at all strict saddles on the invariant manifold. Moreover, we provide exact expressions for the minimum eigenvalue of the Hessian at each strict saddle.
Our results are applicable to the situations $k< r$ and $k>r,$ 
where $k$ denotes the rank of the factorization.

\section{Preliminaries} \label{sec:prel}
For positive integers $m, n$, let $\R^{m\times n}$ denote the set of $m\times n$ real matrices,
$\GLG n \subset \R^{n\times n}$ denote the general linear group of invertible $n\times n$ matrices, 
$\OG n \subset \GLG n$ denote the group of orthogonal $n \times n$ matrices, and for $k \leq m,$ $\SM mk$ denote the subset of $m\times k$ real matrices with orthonormal columns.
For $A, B\in \R^{m\times n}$, $A_{:,k}$ (resp. $A_{k,:}$) denotes the $k$-th column (resp. row) of $A,$ 
$\ip A B$ denotes the standard inner product of $A$ and $B,$ 
and $\| A \|_{F}$ denotes the Frobenius norm of $A.$ 

Let $f \colon \R^{m\by n} \to \R$ be a twice continuously 
differentiable function. The derivative of $f$ with respect to $X$ evaluated at a given point $X_0$ is a linear map from 
$\R^{m\by n}$ to $\R.$ Its action on $H\in \R^{m\by n}$, 
denoted by $D_X f(X_0) [H],$  satisfies
$$
D_X f(X_0) [H] = \lim_{\alpha\ra 0} \frac{f(X_0)+\alpha H) - f(X_0)}{\alpha} .
$$  
The gradient of $f$ at $X_0$, denoted $\nabla_X f(X_0),$ 
is the unique point in $\R^{m\by n}$ such that
$$
D_X f(X_0) [H] = \ip{\nabla_X f(X_0)}{H}. 
$$   
When no confusion is possible we simply write $D f(X_0)$ 
and $\nabla f(X_0).$
The gradient can also be regarded as a function 
$X \mapsto \nabla f(X).$ The derivative of this function 
at $X_0$ is a linear map 
$\nabla^2_X f(X_0)\colon \R^{m\by n} \to \R^{m\by n}.$ 
Its action on $H\in \R^{m\by n}$ is denoted by
$
\nabla^2_X f(X_0)[H].
$
When no confusion is possible we simply write $\nabla^2 f(X_0).$
It is natural to call the linear map $\nabla^2 f(X_0)$ the Hessian map.
The second derivative of $f$ is then defined by 
\begin{equation}\label{eq:D2def}
D^2_X f(X_0)[H,K] = \ip{\nabla^2_Xf(X_0)[K]}{H}. 
\end{equation}
We will always evaluate the second derivative with $K=H$. In this case, we simply write $D^2f(X_0)[H]$. The second derivative is then a scalar valued function from $\R^{m\by n}$ to $\R$. The linear function 
$\nabla^2 f(X_0)[H]$ embedded in the second derivative brings in eigenvalues and eigenvectors associated with the second derivative. 

$X_0$ is a critical point of $f$ if $\nabla J(X_0)=\vzero$. 
The second order Taylor series of $f$ about a critical point $X_0$ 
in the direction of $H$ is
\begin{align}
f(X_0+tH) 
&= f(X_0) +\nhalf  t^2 D^2f(X_0)[H] .\label{eq:sots2}
\end{align}
Together with eigenvalues of $\nabla^2 f(X_0),$ this
can be used to partially classify critical points. 
However, this fails if there is nonzero $H$ with $D^2f(X_0)[H] =0.$
In this event, $X_0$ is termed a degenerate critical point. 
Motivated by the observation that many non-convex optimization problems have degenerate critical points, the concepts of a strict saddle and a strict saddle function have been introduced \cite{ge2015escaping}.  A strict saddle is a critical point for which $\nabla^2f(X_0)$ has a least one negative eigenvalue (this includes local maxima). 

\subsection{Basic Matrix Factorization}
In unconstrained matrix factorization, given $X\in \R^{m \times n}$ 
with $\rank(X)=r$, and a factorization dimension $k$, 
we seek a solution of 
\begin{equation} \label{eq:J}
\min_{(W, S) \in \WS }
    J(W,S) = \nhalf \,  \| X - W S \|_{F}^{2}.
\end{equation}
Here $\WS \triangleq \R^{m\by k} \times \R^{k\by n}$ with
inner product $\ip{(G,H)}{(G^\prime, H^\prime)} =
\ip{G}{G^\prime} + \ip{H}{H^\prime}$ and associated norm 
$\|(G,H)\|_F^2 = \|G\|_F^2 + \|H\|_F^2$.   

When $k\leq r$, problem \eqref{eq:J} has a well known solution derived 
from any compact SVD $X=U\Sigma V^T$.
Let $U_k$ (resp. $V_k$) consist of the first $k$ columns of 
$U$ (resp. $V$) and  $\Sigma_k$ be the top left $k\by k$ 
submatrix of $\Sigma$. 
Then $(W,S)=(U_k\sqrt{\Sigma_k}, \sqrt{\Sigma_k} V_k^T)$ is a 
global minimum of \eqref{eq:J}. However, rather than solving \eqref{eq:J} using the SVD, we are interested in finding a solution using gradient descent methods.  This motivates an interest in the 
global landscape of $J$.

We will work extensively with the gradient, Hessian, 
and second derivative of $J$ defined in \eqref{eq:J}. 
Setting $E \triangleq WS-X,$ we list the easily obtained equations 
for these functions below.
\begin{align}
\nabla J(W,S) 
&=  \begin{pmatrix} E S^T, ~ W^T E \end{pmatrix}\label{eq:gradJ}\\
\nabla^2 J(W,S)[(G,H)] &= (GSS^T + WHS^T +EH^T~ ,~ W^TWH + W^TGS +G^TE ) \label{eq:hessianmap},\\
D^2 J(W,S) [(G,H)] 
&= \|GS\|_F^2  + \|WH\|_F^2 + 2 \trace(  H^TW^TGS+  H^TG^TE) \label{eq:D2J},
\end{align}

A point $(W,S)$ is a critical point of $J$ if $\nabla J(W,S)=\vzero$. 
By \eqref{eq:gradJ} this is equivalent to 
\begin{align}
\label{eq:stpoint}
ES^T=(WS  - X)S^T =\vzero
\quad\textrm{and} \quad
W^T E= W^T(WS  - X) =\vzero.
\end{align}
 
Because $J$ has a continuous symmetry group that leaves its value invariant (see \S\ref{sec:mf-single}), every nonzero critical point of 
$J$ is degenerate.
To see this, let $(W,S)\neq \vzero$ be a critical point, 
and $K\in \R^{k\by k}$ be any matrix with $(WK, -KS)$ nonzero.
Then using \eqref{eq:hessianmap} and \eqref{eq:stpoint} we see that
\begin{align*}
\nabla^2 J(W,S) [(WK, -KS)]
= ( - ES^TK, K^TW^TE)
= \vzero.
\end{align*}
Thus $(WK, -KS)$ is an eigenvector of $\nabla^2 J(W,S)$ with eigenvalue $0$.  
To address this degeneracy, we employ the concept of a strict saddle \cite{ge2015escaping}. 
A critical point $(W,S)$ is a strict saddle if
$\lambda_{\min}(\nabla^2J(W,S) ) <0$. The corresponding eigenvector provides an escape  direction from the neighborhood of the critical point. 
Of particular interest is determining a negative upper bound on $\lambda_{\min}(\nabla^2J(W,S))$ for the strict saddles of $J$.

\subsubsection{Orbits and Basic Orbit Properties}
For $A\in \GLG k$, let $L_A\colon \cX\to \cX$ denote the linear map
$L_A\colon (G,H)\mapsto (GA, A^{-1}H).$
Then for given $(W,S)\in \WS$ let 
\begin{align}
\GLGorb(W,S) \triangleq \{ L_A(W,S) \colon A\in \GLG k\} \label{eq:orbit}
\qquad\textrm{and} \qquad
\OGorb(W,S) \triangleq \{ L_Q(W,S) \colon Q\in \OG k\} .
\end{align}
We call $\GLGorb(W,S)$  the orbit of $(W,S)$ under $\GLG k$, and
$\OGorb(W,S)$) the suborbit of $(W,S)$ under $\OG k$.
The value $J(W,S)$ is constant on each orbit. 
Hence if any point on an orbit is a global minimum, 
all points on the orbit are global minima.
Moreover, the gradient  $\nabla J(W,S)$ must be orthogonal to 
$\Theta(W,S)$ at $(W,S)$. 
To prove this, take the derivative of $(WA,A^{-1}S)$ 
with respect to $A$, and then set $A=I_k$. 
This yields the set of tangents to $\Theta(W,S)$ at $(W,S)$:
\begin{equation}\label{eq:glg_tang}
T_{W,S} = \{ (WK, -KS)\colon  K \in \R^{k \times k} \}.
\end{equation}
Orthogonality is then verified by taking the inner product of any element in \eqref{eq:glg_tang} with \eqref{eq:gradJ}.

Qualitative properties of point neighborhoods are 
also ``preserved'' along the orbit. 
To see this let $(G,H)\in \SP$ and $t>0$.  
Then for any $A \in \GLG k$,
$$
J(W+tG, S+tH) = J(WA + t(GA), A^{-1}S + t( A^{-1}H) ).
$$
So the values of $J$ traced out moving from $(W,S)$ in a line in the direction of $(G,H)$ are the same as those traced out moving 
from $(WA, A^{-1}S)$ along a line in the direction of $(GA, A^{-1}H)$. 
The following lemma (proved in \S\ref{sec:auxlem}) transfers these observations to the derivatives of interest.

\begin{lemma}\label{lem:LAderiv}
For all $(W,S)\in \WS$ and $A \in \GLG k$: 

(a) $\nabla J(L_A(W,S) ) = L_{A^{-T}} \nabla J(W,S)$, 
	
(b) $DJ(L_A(W,S)) [(G,H)] = D J(W,S)[L_{A^{-1}}(G,H)] $,
	
(c) $\nabla^2 J(L_A(W,S))[(G,H)] 
		= L_{A^{-T}}( \nabla^2 J(W,S)[L_{A^{-1}}(G,H)] )$,
		
(d) $D^2 J(L_A(W,S)) [(G, H)] = D^2 J(W,S) [L_{A^{-1}}(G,H)] .$
\end{lemma}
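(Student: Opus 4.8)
The plan is to prove all four parts by a uniform strategy: differentiate the defining relation of each object and then use linearity of $L_A$ together with the explicit formulas \eqref{eq:gradJ}, \eqref{eq:hessianmap}, \eqref{eq:D2J}, rather than manipulating the geometry abstractly. The single observation that drives everything is that $J$ is invariant along orbits, $J(L_A(W,S)) = J(W,S)$ for all $A \in \GLG k$, because $L_A(W,S) = (WA, A^{-1}S)$ and $(WA)(A^{-1}S) = WS$. I would state this first and keep it in hand throughout.

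For part (b), I would start from the chain rule: $D J(L_A(W,S))[(G,H)]$ is, by definition, $\lim_{\alpha \to 0} \alpha^{-1}\big(J(L_A(W,S) + \alpha(G,H)) - J(L_A(W,S))\big)$. Since $L_A$ is linear, $L_A(W,S) + \alpha(G,H) = L_A\big((W,S) + \alpha L_{A^{-1}}(G,H)\big)$, and then invariance of $J$ under $L_A$ collapses the difference quotient to exactly $D J(W,S)[L_{A^{-1}}(G,H)]$. This is essentially just unpacking the line-tracing remark that precedes the lemma, so it should be a couple of lines. Part (d) follows by the identical argument applied to the second-order difference quotient, or more cleanly by differentiating the identity in (b) once more in the direction $(G,H)$ and noting $L_{A^{-1}}$ is a fixed linear map; either way the Jacobian factors $L_{A^{-1}}$ that the chain rule would produce cancel against each other because $J \circ L_A = J$, leaving no stray factor out front — which is why (b) and (d) have no $L_{A^{-T}}$ while (a) and (c) do.

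For part (a), I would use the definition of the gradient: $D J(L_A(W,S))[(G,H)] = \ip{\nabla J(L_A(W,S))}{(G,H)}$. Substituting (b), the left side equals $D J(W,S)[L_{A^{-1}}(G,H)] = \ip{\nabla J(W,S)}{L_{A^{-1}}(G,H)}$. So the task reduces to computing the adjoint of $L_{A^{-1}}$ with respect to the inner product $\ip{(G,H)}{(G',H')} = \ip{G}{G'} + \ip{H}{H'}$. A direct check gives $\ip{L_B(G,H)}{(G',H')} = \ip{GB}{G'} + \ip{B^{-1}H}{H'} = \ip{G}{G'B^T} + \ip{H}{B^{-T}H'} = \ip{(G,H)}{L_{B^{-T}}(G',H')}$, i.e. the adjoint of $L_B$ is $L_{B^{-T}}$. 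With $B = A^{-1}$ this yields $\ip{\nabla J(W,S)}{L_{A^{-1}}(G,H)} = \ip{L_{A^{-T}}\nabla J(W,S)}{(G,H)}$, and since $(G,H)$ is arbitrary we conclude $\nabla J(L_A(W,S)) = L_{A^{-T}}\nabla J(W,S)$. Part (c) then comes from differentiating the identity in (a) with respect to $(W,S)$ in the direction $(G,H)$: the left side, by the chain rule applied to $X \mapsto \nabla J(X)$ composed with the linear map $L_A$, is $\nabla^2 J(L_A(W,S))\big[L_A(G,H)\big]$... wait — more carefully, since $L_{A^{-T}}$ is a constant linear map, $\frac{d}{dt}\nabla J(L_A((W,S)+t(G,H))) = L_{A^{-T}}\,\nabla^2 J(W,S)[(G,H)]$ on one side, and on the other side it is $\nabla^2 J(L_A(W,S))[L_A(G,H)]$; replacing $(G,H)$ by $L_{A^{-1}}(G,H)$ gives exactly (c). As a sanity check one can instead verify (c) directly by plugging $L_A(W,S)=(WA,A^{-1}S)$ and $L_{A^{-1}}(G,H)=(GA^{-1},AH)$ into \eqref{eq:hessianmap} and grinding.

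The main obstacle is bookkeeping rather than conceptual: keeping the exponents on $A$ straight ($A$ versus $A^{-1}$ versus $A^{-T}$) as one passes between the action $L_A$, its inverse $L_{A^{-1}}$, and its adjoint $L_{A^{-T}}$, and being careful that the chain-rule differential of the constant linear map $L_A$ is $L_A$ itself (not its inverse). I would recommend proving (b) and (a) from first principles, then deriving (d) from (b) and (c) from (a) by differentiation, so that each exponent is forced by a single clean computation and the pattern "$L_{A^{-T}}$ appears exactly for the objects that are gradient-like (they live in the dual), and is absent for the scalar-valued derivatives" is transparent.
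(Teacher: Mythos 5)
Your argument is correct, but it runs in the opposite direction from the paper's. The paper computes (a) and (c) directly from the explicit formulas \eqref{eq:gradJ} and \eqref{eq:hessianmap} (noting that $E=WS-X$ is unchanged under $L_A$), and then obtains the scalar identities (b) and (d) from (a) and (c) via the adjoint relation. You instead take the orbit invariance $J\circ L_A=J$ as the sole input: (b) falls out of the difference quotient, (a) follows from (b) by identifying the adjoint of $L_{A^{-1}}$, (c) follows by differentiating the identity in (a) using linearity of $L_A$, and (d) by a second-order version of (b). Your route is more conceptual and more general --- it proves the lemma for \emph{any} smooth orbit-invariant function, not just the particular $J$ of \eqref{eq:J}, and makes transparent why the covariant factor $L_{A^{-T}}$ appears exactly on the gradient-like objects --- at the cost of having to set up the chain-rule and adjoint machinery; the paper's computation is shorter for (a) and checks (c) by brute force, which is essentially your proposed ``sanity check.''

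One slip to fix: your displayed computation correctly shows $\ip{L_B(G,H)}{(G',H')}=\ip{G}{G'B^T}+\ip{H}{B^{-T}H'}$, and since $L_{B^T}(G',H')=(G'B^T,\,B^{-T}H')$ this says the adjoint of $L_B$ is $L_{B^T}$, \emph{not} $L_{B^{-T}}$ as you wrote. Had you applied your stated rule literally with $B=A^{-1}$ you would have landed on $L_{A^{T}}$ and gotten (a) wrong; your specialization $\ip{\nabla J(W,S)}{L_{A^{-1}}(G,H)}=\ip{L_{A^{-T}}\nabla J(W,S)}{(G,H)}$ is nonetheless the correct one, so the proof stands once the label on the adjoint is corrected.
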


For a critical point $(W,S),$ the inertia of $\nabla^2 J(W,S)$ 
is the triple $(i_+, i_-, i_0),$ consisting of the number of its positive, negative, and zero eigenvalues, respectively.
We have already noted that if any point on an orbit is a global minimum, 
so are all points on the orbit. The same holds for strict saddles.
In addition, the inertia of $\nabla^2 J$ is an invariant of an orbit.
The following theorem (proved \S\ref{sec:auxlem}) lists these 
and other orbit properties.

\begin{theorem}\label{thm:orbits}
Let $(W,S)\in \WS$ and $A\in \GLG k$. Then 

(a) If $(W,S)$ is a critical point (global minimum, strict saddle), 
so are all points in $\GLGorb(W,S)$.
	
(b) $\nabla^2 J$ has the same inertia at all points in $\GLGorb(W,S).$ 

(c) $\nabla^2 J$ has the same eigenvalues at all points in $\OGorb(W,S).$ 

(d) For each $A\in \GLG k$, 
$
\textstyle \lambda_{\min} (\nabla^2 J(L_A(W, S) ) )
\leq 
\frac{\lambda_{\min} (\nabla^2 J(W,S) )}{\max\{ \lambda_{\max}(AA^T),	
	\lambda_{\min}^{-1} (AA^T)\} }.
$

\end{theorem}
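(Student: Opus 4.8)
Parts (a)–(c) follow directly from Lemma \ref{lem:LAderiv}. For (a), since $L_{A^{-T}}$ is a linear isomorphism of $\WS$, part (a) of the lemma gives $\nabla J(L_A(W,S)) = \vzero$ iff $\nabla J(W,S) = \vzero$; combined with the already-noted invariance of $J$ on orbits, the global-minimum claim follows, and the strict-saddle claim follows once (b) is established. For (b), I would argue that $\nabla^2 J(L_A(W,S))$ and $\nabla^2 J(W,S)$ are conjugate as linear operators: part (c) of the lemma says $\nabla^2 J(L_A(W,S)) = L_{A^{-T}} \circ \nabla^2 J(W,S) \circ L_{A^{-1}} = L_{A^{-T}} \circ \nabla^2 J(W,S) \circ L_{A^{-T}}^{-1}$, since $(L_{A^{-T}})^{-1} = L_{A^T} = L_{(A^{-1})^{-1}}$... wait, more carefully $L_{A}^{-1} = L_{A^{-1}}$, so $(L_{A^{-T}})^{-1} = L_{(A^{-T})^{-1}} = L_{A^T}$, and $L_{A^{-1}} = L_{A^T}$ only fails in general — so I should instead write $\nabla^2 J(L_A(W,S)) = L_{A^{-T}} \circ \nabla^2 J(W,S) \circ L_{A^{-1}}$ and note this is a similarity transformation precisely when $L_{A^{-1}} = (L_{A^{-T}})^{-1}$, i.e. $L_{A^{-1}} = L_{A^T}$, which holds iff $A^{-1} = A^T$, i.e. $A$ orthogonal. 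This already gives (c): for $A = Q \in \OG k$ the two Hessian maps are similar via $L_{Q^{-T}} = L_Q$, hence have identical eigenvalues (including multiplicities). For general $A$ in (b), conjugation by a nonsingular operator does not preserve eigenvalues, but it does preserve inertia by a Sylvester-type argument: I would verify that $\nabla^2 J(W,S)$ is a self-adjoint operator on $\WS$ (immediate from \eqref{eq:D2def} and symmetry of the second derivative in its two arguments), so its quadratic form $H \mapsto D^2 J(W,S)[H] = \ip{\nabla^2 J(W,S)[H]}{H}$ has a well-defined inertia; then by Lemma \ref{lem:LAderiv}(d), $D^2 J(L_A(W,S))[H] = D^2 J(W,S)[L_{A^{-1}}H]$, and since $L_{A^{-1}}$ is a bijective linear change of variables, Sylvester's law of inertia gives that the two quadratic forms have the same inertia.

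For part (d), the plan is to exploit part (c) of the lemma together with the variational (Rayleigh-quotient) characterization of $\lambda_{\min}$. Write $T = L_A$, so $T^{-1} = L_{A^{-1}}$, and let $\cH = \nabla^2 J(W,S)$, $\cH' = \nabla^2 J(L_A(W,S)) = T^{-T} \circ \cH \circ T^{-1}$ where $T^{-T}$ denotes the adjoint of $T^{-1}$ (this is what $L_{A^{-T}}$ is, as one checks from the inner product on $\WS$). Then for any nonzero $H$,
\begin{equation*}
\frac{\ip{\cH'[H]}{H}}{\ip{H}{H}} = \frac{\ip{\cH[T^{-1}H]}{T^{-1}H}}{\ip{H}{H}} = \frac{\ip{\cH[H']}{H'}}{\ip{H'}{H'}} \cdot \frac{\ip{T^{-1}H}{T^{-1}H}}{\ip{H}{H}},
\end{equation*}
where $H' = T^{-1}H$ ranges over all of $\WS$ as $H$ does. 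Taking $H$ (equivalently $H'$) to be a minimizing eigenvector of $\cH$ and bounding the second factor gives $\lambda_{\min}(\cH') \le \lambda_{\min}(\cH) \cdot \big(\inf_{H' \neq 0} \tfrac{\|H'\|^2}{\|TH'\|^2}\big)$ when $\lambda_{\min}(\cH) < 0$; I would then identify $\inf_{H'} \|H'\|^2/\|TH'\|^2 = 1/\|T\|_{\mathrm{op}}^2 = 1/\lambda_{\max}(T^*T)$, and compute $\lambda_{\max}(T^*T)$ for $T = L_A$. The singular values of the map $(G,H) \mapsto (GA, A^{-1}H)$ on $\WS$ are the singular values of right-multiplication by $A$ together with those of left-multiplication by $A^{-1}$, i.e. the singular values of $A$ (each with multiplicity $m$) and the singular values of $A^{-1}$, which are the reciprocals of those of $A$ (each with multiplicity $n$). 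Hence $\lambda_{\max}(T^*T) = \max\{\lambda_{\max}(AA^T),\, \lambda_{\min}^{-1}(AA^T)\}$, and substituting gives exactly the claimed bound.

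The main obstacle is bookkeeping rather than conceptual: correctly identifying the adjoint of $L_A$ on $\WS$ under the block inner product (so that Lemma \ref{lem:LAderiv}(c) genuinely reads $\cH' = L_A^{-*} \cH L_A^{-1}$), and correctly computing the operator norm $\|L_A\|_{\mathrm{op}}$ — in particular the appearance of $A^{-1}$ in the second slot, which forces the $\max$ over $\lambda_{\max}(AA^T)$ and $\lambda_{\min}^{-1}(AA^T)$ rather than a single term. One subtlety to flag: the inequality direction in (d) requires $\lambda_{\min}(\cH) \le 0$ (true for strict saddles and, by degeneracy of nonzero critical points, for all nonzero critical points), since multiplying a negative number by a factor in $(0,1]$ only makes it larger (closer to $0$), and the Rayleigh quotient argument must be run so that the minimizing direction of $\cH$ is used as the test direction for $\cH'$; I would state the hypothesis explicitly or simply note the bound is vacuous/trivial when $\lambda_{\min}(\cH) \ge 0$.
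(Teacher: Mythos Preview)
Your proposal is correct and follows essentially the same route as the paper: Lemma~\ref{lem:LAderiv} for (a), Sylvester's law of inertia applied to the congruence $D^2J(L_A(W,S))[H]=D^2J(W,S)[L_{A^{-1}}H]$ for (b), specialization to $A\in\OG k$ (where congruence becomes similarity) for (c), and the Rayleigh-quotient/test-direction argument combined with $\|L_A\|_{\mathrm{op}}^2=\max\{\lambda_{\max}(AA^T),\lambda_{\min}^{-1}(AA^T)\}$ for (d). The paper carries out (b) by passing to coordinates in an orthonormal basis of $\WS$ to exhibit the congruence explicitly (and proves the operator-norm formula as a separate Lemma~\ref{lem:INLA}), but this is a presentational rather than a mathematical difference; your observation that the inequality in (d) requires $\lambda_{\min}(\nabla^2 J(W,S))\le 0$ is a point the paper leaves implicit.
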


Note that 
$\max\{ \lambda_{\max}(AA^T), \lambda_{\min}^{-1} (AA^T)\}
	\geq 1$  with equality when $A \in \OG k.$ 
For an orthogonal matrix $A,$ 
$\lambda_{\min}(\nabla^2 J(L_A(W,S)) = \lambda_{\min}(\nabla^2 J(W,S)).$ 
In this case, the bound in (d) is an equality. 

In summary, if an orbit contains a critical point, 
all points in the orbit are critical points; 
if it contains a strict saddle, all points in the orbit are strict saddles, 
and if it contains a global minimum, all points on the orbit 
are global minima. 
In addition, the number of positive, negative and zero eigenvalues 
of $\nabla^2 J$ are invariants of each orbit. 
Using the known result  that all critical points are 
either global minima and strict saddles \cite{baldi1989neural}, 
we see that there are three kinds of obits: 
orbits of global minima, orbits of strict saddles, 
and orbits of noncritical points.

\section{The Landscape of Matrix Factorization} \label{sec:mf-single} 

Our approach is to first construct a representative point, 
called a canonical point, on each orbit of critical points. 
We will use each canonical point to reason about all of 
the points on its orbit. In particular, we examine 
$\lambda_{\min}(\nabla^2 J(W,S))$ over orbits of strict saddles. 
As a by-product we recover the known result that every critical 
point of $J$ is either a global minimum or a strict saddle \cite{baldi1989neural}.

The construction below exploits the known connection 
between matrix factorization and an SVD of the data matrix $X$. 
We will use an SVD of $X$ as a theoretical tool in our 
definitions and proofs. However, we do not intend to 
numerically compute an SVD of $X.$ For clarity we assume $m < n.$ 
This is without loss of generality since symmetric arguments 
apply when $n<m.$
Since $m<n,$ the $m\by m$ matrix $XX^T$ is of interest.
This has $r$ positive eigenvalues and $m-r$ zero eigenvalues. 
Denote these by 
$\sigma_1^2 \geq \sigma_2^2 \geq \cdots \geq \sigma_r^2 >0$ and 
$\sigma^2_{r+1}= \cdots = \sigma_m^2=0. $
Let $u_1,\dots, u_m$ denote a set of $m$ corresponding orthonormal eigenvectors 
with $XX^T u_i = \sigma_j^2 u_i$ for $i\in \Int 1r$, and $XX^T u_i =\vzero$  
for $i\in \Int {r+1}m$.  These orthonormal eigenvectors may not be unique 
(the nonzero eigenvalues may be repeated).
Place the eigenvectors in the columns of $U\in \R^{m\by m}$
and form a compatible full SVD 
\begin{equation}\label{eq:fullsvdX}
\textstyle
X=U\Sigma V^T =\sum_{i=1}^m \sigma_i u_i v_i^T.
\end{equation}
The singular values of $X$ are unique, but in general $U$ and $V$ are not.
It does not matter for our purposes which SVD of $X$ is selected as long as it is used consistently.

\subsection{Orbit Representation: Canonical Points} 

Fix a factorization dimension $k,$ and for $q \in \Int 1{\min\{k,m\}},$
place $q$ selected singular values of $X$, 
denoted by  $\lambda^2_1 \geq \lambda^2_2 \geq \cdots \geq \lambda^2_q$, 
in decreasing order in a diagonal matrix $\Lambda^2 \in \R^{q\by q},$ 
and a corresponding set of $q$ left singular vectors of $X$ 
in the columns of $\bar U \in \R^{m\times q}.$
So $XX^T \bar U = \bar U \Lambda^2$. 
Then $\bar U^T X = \bar U^T U \Sigma V^T = \Lambda \bar V^T,$ 
where $\bar V$ denotes the submatrix of $V$ in \eqref{eq:fullsvdX} 
corresponding to the columns of $\bar U.$ 
Let $V_0 =[ v_{r+1},\dots, v_n]$ and $C_0 \in \R^{(n-r)\by (k-q)}$. 
Now form 
\begin{equation} \label{eq:bsp}
(W_c, S_c)= 
\left ( \begin{bmatrix} \bar U  & \vzero_{m\times(k-q)}\end{bmatrix}, 
\begin{bmatrix} \Lambda {\bar V}^T\\ C_0^T V_0^T \end{bmatrix} \right).
\end{equation}
Any point of the form \eqref{eq:bsp} will be called a canonical point.

\begin{theorem}\label{thm:bsp}
For $1\leq q \leq \min\{k,m\}$, the following hold:

(a) $(W_c, S_c)$ in \eqref{eq:bsp} is a critical point of $J$ 
with 
$J (W_c, S_c) = \nicefrac{1}{2} \left( \sum_{i=1}^r \sigma_i^2 
	- \sum_{j=1}^q \lambda^2_j \right).$
	 
(b) If $(W,S)$ is a critical point of $J$ with $\rank(W)=q$, 
then there exists $(W_c, S_c)$ of the form \eqref{eq:bsp} 
and $A\in \GLG k$ such that $(W, S) =  L_A( W_c , S_c)$. 
\end{theorem}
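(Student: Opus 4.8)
For part (a), the plan is to verify the two critical-point equations \eqref{eq:stpoint} directly for the block matrices in \eqref{eq:bsp}. First I would compute $W_c S_c = \bar U \Lambda \bar V^T$, using that the second block column of $W_c$ is zero, so the $C_0^T V_0^T$ row of $S_c$ contributes nothing. Then $E = W_c S_c - X = \bar U\Lambda\bar V^T - U\Sigma V^T$, which is exactly the part of the SVD of $X$ supported on the left singular vectors \emph{not} selected in $\bar U$. The orthonormality relations $\bar U^T U$ (a partial permutation-type matrix) then make $W_c^T E = \bar U^T E = 0$ immediate, and for $ES_c^T$ one checks that $E$ lives in the orthogonal complement of $\range(\bar U)$ while $\Lambda\bar V^T$ and $C_0^T V_0^T$ are built from rows of $V^T$, so $E V \begin{bmatrix}\Lambda\\ \ast\end{bmatrix}^T$ vanishes on the $\bar V$ block (orthogonality of the selected vs. unselected right singular directions among the top $r$) and on the $V_0$ block ($E$ has no component along $v_{r+1},\dots,v_n$ since those carry zero singular value). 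The value of $J$ is then $\tfrac12\|E\|_F^2 = \tfrac12\sum_{i : u_i \notin \bar U}\sigma_i^2 = \tfrac12\bigl(\sum_{i=1}^r\sigma_i^2 - \sum_{j=1}^q\lambda_j^2\bigr)$.

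For part (b), let $(W,S)$ be a critical point with $\rank(W)=q$. The idea is to first normalize $W$ within its orbit. Take a compact SVD $W = P \Gamma R^T$ with $P\in\SM mq$, $\Gamma\in\R^{q\times q}$ diagonal positive, $R\in\SM kq$; extend $R$ to an orthogonal $\tilde R = [\,R\ \ R^\perp\,]\in\OG k$ and absorb $\tilde R$ (orthogonal, hence in $\GLG k$) so that after acting by $L_{\tilde R}$ we may assume $W = [\,\tilde W\ \ \vzero_{m\times(k-q)}\,]$ with $\tilde W\in\R^{m\times q}$ of full column rank $q$. Next I would use a second, upper-triangular $A\in\GLG q$ (embedded as $\diag(A, I_{k-q})$) to further reduce $\tilde W$; but the cleaner route is: from the critical-point equation $W^T E = 0$ we get $W^T W S = W^T X$, and since $\tilde W$ has full column rank this pins down the top $q$ rows of $S$ as $(\tilde W^T\tilde W)^{-1}\tilde W^T X$, while $ES^T = 0$ combined with this shows $\tilde W\tilde W^+ X = \Pi X$ is the orthogonal projection of $X$ onto $\range(\tilde W)$, and moreover $\range(\tilde W)$ must be spanned by $q$ of the left singular vectors $u_i$ (the standard argument: $\Pi$ commutes with $XX^T$, so $\range(\tilde W)$ is $XX^T$-invariant, hence a sum of eigenspaces, and one can choose an orthonormal eigenbasis $\bar U$ of it). Then there is $B\in\GLG q$ with $\tilde W = \bar U B$; acting by $L_{\diag(B^{-1}, I_{k-q})}$ replaces $\tilde W$ by $\bar U$. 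With $W$ now equal to $[\,\bar U\ \ \vzero\,]$, the top $q$ rows of $S$ become $\bar U^T X = \Lambda\bar V^T$, and the bottom $k-q$ rows are unconstrained by the critical equations except that $E = \bar U\Lambda\bar V^T - X$ must satisfy $ES^T = 0$ on those rows too; writing the bottom block as $C^T$ and noting $E$ is supported on $\spn\{v_1,\dots,v_r\}\setminus\bar V$ plus nothing (zero on $V_0$), one shows the bottom rows must lie in $\range(V_0)$, i.e. have the form $C_0^T V_0^T$. This is exactly \eqref{eq:bsp}, and the composite of the three $L_{(\cdot)}$ maps gives the required $A\in\GLG k$.

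The main obstacle I expect is the bookkeeping in part (b) around the \emph{unselected} directions and the bottom $k-q$ rows of $S$: one must argue carefully that $\range(\tilde W)$ is a genuine sum of eigenspaces of $XX^T$ (handling repeated singular values, where the eigenbasis is not unique — this is where the phrase ``as long as it is used consistently'' matters, and one may need to allow $\bar U$ to be any orthonormal basis of that invariant subspace rather than a fixed set of columns of $U$), and that the residual $E$ has no component along $v_{r+1},\dots,v_n$, forcing the free rows of $S$ into $\range(V_0)$. The SVD-normalization steps and the use of Theorem \ref{thm:orbits}(a) to know the orbit stays within critical points are routine; it is the invariant-subspace identification and the precise description of the null directions that require care.
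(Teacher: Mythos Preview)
Your plan for part (a) is correct and matches the paper's direct verification. Your normalization steps in part (b) are also essentially those of the paper (the paper first permutes columns and then takes a compact SVD of the leading $q$ columns, whereas you take a compact SVD of $W$ directly and extend $R$ to an orthogonal matrix; these lead to the same intermediate form $W=[\hat U\ \vzero]$).

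There is, however, a genuine gap in your handling of the bottom $k-q$ rows of $S$. Once you have reduced to $W=[\bar U\ \vzero]$, the residual is $E=\bar U\Lambda\bar V^T-X=-\sum_{i\le r,\,u_i\notin\bar U}\sigma_i u_iv_i^T$, whose row space is exactly $\spn\{v_i:i\le r,\,u_i\notin\bar U\}$, as you say. But then $ES_b^T=0$ forces the columns of $S_b^T$ into the \emph{orthogonal complement} of that space, which is $\spn(\bar V)+\spn(V_0)$, not merely $\spn(V_0)$. In general one only gets
\[
S_b^T=\bar V\,\bar C + V_0\,C_0,\qquad \bar C\in\R^{q\times(k-q)},\ C_0\in\R^{(n-r)\times(k-q)},
\]
and nothing in the critical-point equations eliminates $\bar C$. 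The paper closes this gap with one additional orbit move: setting
\[
E=\begin{bmatrix} I_q & \vzero\\ -\bar C^T\Lambda^{-1} & I_{k-q}\end{bmatrix}\in\GLG k,
\]
one checks $[\bar U\ \vzero]E=[\bar U\ \vzero]$ while $E^{-1}$ acting on $S$ replaces $S_b$ by $S_b-\bar C^T\bar V^T=C_0^TV_0^T$. Only after this extra $L_E$ do you land on a point of the canonical form \eqref{eq:bsp}. Your anticipated ``obstacle'' is real, but the resolution is not that $S_b$ already lies in $\range(V_0^T)$; it is that the unwanted $\bar V$-component can be absorbed by a further element of $\GLG k$.
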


\begin{proof}
(a) 
We show that $(W_c, S_c)$ satisfies \eqref{eq:stpoint}.
$(W_cS_c-X)S_c^T 
= (\bar U \Lambda {\bar V}^T -U\Sigma V^T )[  \bar V  \Lambda , V_0 C_0] 
=\vzero,$ and
$W_c^T(W_c S_c - X)
= \bigl[ \begin{smallmatrix} {\bar U}^T \\ \vzero_{(k-q)\times m} \end{smallmatrix} \bigr] 
( \bar U \Lambda {\bar V}^T   - U\Sigma V^T )
= \vzero.$
In addition,
$J(W_c, S_c) = \nicefrac{1}{2} \| \bar U \Lambda {\bar V}^T -U\Sigma V^T \|_F^2 = \nicefrac{1}{2} \left( \sum_{i=1}^r \sigma_i^2 - \sum_{j=1}^q \lambda^2_j \right).$\\
(b) Step (i).
Since $\rank(W) =q,$ there is a permutation matrix $P\in \GLG k$ such that the first $q$ columns of $WP$ are linearly independent. 
If $\hat W$ denotes the matrix of these first $q$ columns, then
\begin{equation}\label{eq:step1}
W = \begin{bmatrix} \hat W & \vzero_{m\times (k-q)}\end{bmatrix}] 
\begin{bmatrix} I_q & F \\ \vzero_{(k-q)\times q} & I_{(k-q)} 
	\end{bmatrix} P^T,
\end{equation}
where $F$ is determined by the last $k-q$ columns of $WP$. \\
Step (ii). Let $\hat U \hat \Sigma {\hat V}^T$ be a compact SVD of $\hat W$. 
Noting that $\hat U\in \SM mq$ and $\hat \Sigma, \hat V  \in \GLG q,$ 
we have
\begin{equation} \label{eq:step2}
W = \begin{bmatrix} \hat U  & \vzero_{m\times (k-q)}\end{bmatrix} \begin{bmatrix} \hat \Sigma \hat V^T & \hat \Sigma\hat V^T F \\ \vzero_{(k-q)\times q} & I_{(k-q)} \end{bmatrix} P^T.
\end{equation}
Let $C$ denote the product of the two rightmost matrices in \eqref{eq:step2}.
Since $\hat \Sigma, \hat V  \in \GLG q$, $C\in \GLG k$. Let 
$(W_1, S_1) = L_{C^{-1}} (W, S).$ 
By \eqref{eq:step2}, 
$W_1 = \begin{bmatrix} \hat U & \vzero_{m\times (k-q)}\end{bmatrix},$ 
and by Theorem \ref{thm:orbits}, $(W_1, S_1)$ is a critical point.
Write $S_1^T = \begin{bmatrix} S_a^T & S_b^T \end{bmatrix}$ with $S_a \in \R^{q\by n}$
and $S_b\in \R^{(k-q)\by n}.$ 
Then by \eqref{eq:stpoint},
$$
(\hat U S_a -X) \begin{bmatrix} S_a^T & S_b^T \end{bmatrix} =\vzero
\quad \textrm{and}\quad 
\begin{bmatrix} {\hat U}^T\\ \vzero_{{(k-q)}\times m} \end{bmatrix}  [\hat U   S_a - X]  =\vzero.
$$
The second condition implies  $S_a =  {\hat U}^T X$. Then the first implies
(c-i)
$(\hat U {\hat U}^T - I) XX^T \hat U  =\vzero$ 
and  
(c-ii)
$(\hat U \hat U^T -I )X S_b^T =\vzero.$\\
Step (iii). Condition (c-i) implies that range of $\hat U$ is invariant under $XX^T,$ 
i.e., $XX^T \cR(\hat U) \subseteq \cR(\hat U)$. Since $\cR(\hat U)$ has dimension $q$, there are $q$ orthonormal eigenvectors of $XX^T$ that form a basis for $\cR(\hat U)$. 
Let $\bar U$ be the matrix with this basis as its columns arranged in decreasing order of the corresponding eigenvalue. 
Since every column of $\hat U$ has a representation in this basis,
for some $Q\in \GLG q$, $\hat U = \bar U Q$. 
In addition, $I_k = {\hat U}^T \hat U = Q^T Q$. 
Hence $Q \in \OG k$.
Now rewrite \eqref{eq:step2} as
\begin{equation} \label{eq:step3}
W = \begin{bmatrix} \bar U & \vzero_{m\times (k-q)}\end{bmatrix}
 \begin{bmatrix} Q & \vzero_{} \\ \vzero_{(k-q)\times q} & I_{(k-q)} \end{bmatrix} 
 \begin{bmatrix} \hat \Sigma\hat V^T & \hat \Sigma\hat V^T F \\ \vzero_{(k-q)\times q} & I_{(k-q)} \end{bmatrix} P^T,
\end{equation}
Let $D$ denote the matrix in \eqref{eq:step3} containing $Q$, and let 
$(W_2, S_2) = L_{(DC)^{-1}}(W,S)$. 
Then $(W_2, S_2)$ is a critical point with
$W_2 =W(DC)^{-1}  = \begin{bmatrix} \bar U & \vzero_{m\times (k-q)} \end{bmatrix}$,  
and
$S_2=(DC) S= D \bigl[\begin{smallmatrix} S_a\\ S_b\end{smallmatrix} \bigr]  
= \bigl[ \begin{smallmatrix} Q{\hat U}^T X\\ S_b\end{smallmatrix} \bigr]
=  \bigl[ \begin{smallmatrix} {\bar U}^T X\\ S_b\end{smallmatrix} \bigr]
=  \bigl[ \begin{smallmatrix} \Lambda {\bar V}^T \\ S_b\end{smallmatrix} \bigr]. $
By construction, 
$\Lambda^2$ a diagonal matrix with eigenvalues of $XX^T$ arranged in decreasing order down its diagonal, the columns of $\bar U$ are corresponding eigenvectors of $XX^T$ with $XX^T\bar U = \bar U  \Lambda^2$, and by  (c-ii) $(\bar U \bar U^T -I )X {S}_b^T =\vzero$.\\
Step (iv).
Lemma \ref{lem:S2props} shows that $S_b^T = \bar V \bar C + V_0 C_0$ where $V_0 =[ v_{r+1},\dots, v_n]$,  $\bar C \in \R^{q\by (k-q)}$, and $C_0 \in \R^{(n-r)\by (k-q)}$. Hence 
\begin{equation} \label{eq:bsp0}
(W_2, S_2)= 
\left ( \begin{bmatrix} \bar U  & \vzero_{m\times(k-q)}\end{bmatrix}, 
\begin{bmatrix} \Lambda {\bar V}^T\\ \bar C^T \bar V^T + C_0^T V_0^T \end{bmatrix} \right).
\end{equation}
Step (v). $(W_c, S_c) = \left(
\bigl [ \begin{smallmatrix} \bar U  & \vzero_{m\times(k-q)}\end{smallmatrix} \bigr ], 
\bigl [ \begin{smallmatrix} \Lambda {\bar V}^T\\ C_0^T V_0^T \end{smallmatrix} \bigr ]
 \right)$ 
has the canonical form \eqref{eq:bsp}. 
Moreover,  if
$
E = \bigl[\begin{smallmatrix} I_q & \vzero_{(k-q)\by q}\\ -\bar C^T\Lambda^{-1} & I_{k-q}\end{smallmatrix}\bigr] \in \GLG q$
with
$E^{-1} = \bigl[\begin{smallmatrix} I_q & \vzero_{(k-q)\by q}\\ -\bar C^T\Lambda^{-1} & I_{k-q}\end{smallmatrix}\bigr],
$ 
then
$\bigl[\begin{smallmatrix} \bar U & \vzero \end{smallmatrix}\bigr] E
	= \bigl[\begin{smallmatrix} \bar U & \vzero \end{smallmatrix}\bigr]$ 
and 
$E^{-1} \bigl[ 
	\begin{smallmatrix} \Lambda \bar V^T \\ C_0^TV_0^T 
	\end{smallmatrix} \bigr]
	= 
	\bigl[ \begin{smallmatrix} 
	\Lambda \bar V^T \\ \bar C^T\bar V^T +C_0^TV_0^T\end{smallmatrix}\bigr].
$
Hence $L_E(W_c,S_c) = (W_2, S_2)=L_{(DC)^{-1}}(W,S).$ 
Thus $(W,S) = L_{DCE}(W_c,S_c).$
\end{proof}

\begin{figure}[t]
\centering
\begin{tikzpicture}
     \draw[ smooth, ultra thick, double   distance=20pt] plot coordinates{(-5,2) (-3,1.5) (-1,0) (1,0) (3,1.5) (5,2)};
     \draw [fill=white,white] (5.2,1.64) rectangle (5,1.4);     
     \draw [fill=white,white] (-5.2,1.64) rectangle (-5,1.4);           
     \draw [dotted, thick] (-2.6,1.25) ellipse (0.2cm and 0.4cm);
     \draw [dotted, thick] (2.8,1.4) ellipse (0.2cm and 0.4cm);
    \draw [-., ultra thick] (4.93,2) ellipse (0.1cm and 0.4cm);
    \draw [-., ultra thick] (-4.93,2) ellipse (0.1cm and 0.4cm);
    \draw[->,ultra thick] (5.1,2) -- (6,2.1);
    \draw[->,ultra thick] (-5.1,2) -- (-6,2.1);    
     \filldraw [black] (-2.8,1.4) circle (2pt);
     \draw  (-5,1) node{\small $(W,S) \in \R^{m \times k} \times \R^{k \times n}$};
     \filldraw [blue] (2.95,1.2) circle (2pt);
     \draw [black] (4,1) node{\small $(\hat U, \hat \Sigma \hat V^T S)$};  
     \filldraw [red] (2.65,1.6) circle (2pt);
     \draw [black] (1.9,2) node{\small $(\bar U, \Lambda \bar V^T)$};  
     \filldraw [gray] (-2.45,1) circle (2pt);
     \draw [black] (-4,.4) node{\small $L_{V}(W,S) = (\hat U\hat \Sigma, \hat V^T S)$};       
\end{tikzpicture}
\caption{\small A conceptual view of the orbit $\Theta(W,S)$. $(W,S)$ is a critical point with $\rank(W)=\min\{k,m\}$ and compact SVD $W = \hat U \hat \Sigma \hat V^T$. 
$L_{\hat V}(W,S) = (\hat U\hat \Sigma, \hat V^T S)$ is a critical point on the same sub-orbit under $\OG k$. 
This point is can be transported along the orbit $\Theta (W,S)$ 
using $L_{\hat \Sigma^{-1}}$ to $(\hat U, \hat \Sigma\hat V^T S)$.
There exists an orthogonal $Q$ with $\hat U =\bar UQ$ with $\bar U$ formed from $k$ left singular vectors of $X$. 
This gives a canonical point $(\bar U, \Lambda \bar V^T)$ on the same sub-orbit under $\OG k$ as $(\hat U, \hat \Sigma\hat V^T S)$. }
\label{fig:orbit}
\end{figure}
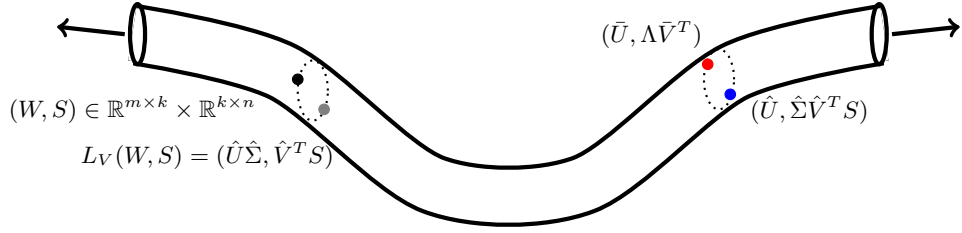

By Theorem \ref{thm:bsp} every critical point with $W$ of positive rank lies on the orbit of a canonical point.
This is depicted schematically in Figure \ref{fig:orbit}.
Note that there is also a family of critical points of the form 
$W= \vzero_{m\by k}$ and $S^T = [C_0^T V_0^T]$ 
with $C_0\in \R^{k\by (n-r)}$.
These are addressed separately below.

The results and proof  of Theorem \ref{thm:bsp} simplify when $q=k$.  
In this case, there is no need for the term $S_b$ and condition (c-ii).  
In contrast, when $q< \min\{k,m\},$ $W$ has a nontrivial null space and
$S$ can be decomposed into the sum of a term $S_a$ in $\cN(W)^\perp$ and a term $S_b$ in $\cN(W)$. The term $S_b$ is redundant since it does not impact the value of $J$, but we need to account for its possible presence.

\subsection{The Critical Points $\mathbf{(\vzero, C_0^T V_0^T)}$}\label{sec:eig_0C0}

We first examine the family of critical points associated with the origin.
These take the form 
\begin{equation}\label{eq:cp@0}
(W,S) = (\vzero, C_0^T V_0^T),
\end{equation}
where $V_0 =[v_{r+1}\ \dots\ v_n] \in \R^{n\by (n-r)}$ and 
$C_0 \in \R^{(n-r)\by k}.$   This analysis is a warm-up for the 
corresponding analysis of the canonical points \eqref{eq:bsp0}. 

Throughout this section $(W,S)$ is a point of the form \eqref{eq:cp@0}.
To verify that $(W,S)$ is a critical point, note that 
$WS=\vzero,$ $W^TE=\vzero$ and $ES^T = X V_0 C_0 =\vzero.$
Hence \eqref{eq:stpoint} is satisfied.
Since $SS^T = C_0^TC_0$ is symmetric positive semidefinite, 
there exists $Z\in \OG k$ and a diagonal matrix 
$\Omega \in \R^{k\by k}$ with diagonal
$\omega_1\geq \cdots \geq \omega_k>0,$ such that 
$C_0^TC_0= Z\Omega Z^T.$ 
We use the columns of $Z$ to define the following matrices. 
For $j\in \Int 1 k,$ let 
$$
G^0_{i,j}= u_i z_j^T,\quad i\in \Int 1 m, \qquad
H^0_{j,i}= z_j v_i^T, \quad i\in \Int 1 n.
$$  
These matrices will identify the eigenvectors and eigenvalues of the Hessian at $(W,S)$.

\begin{theorem} \label{thm:ev0c0} 
(a) For $i\in \Int 1 r$ and $j\in \Int 1 k,$ there exist $\delta_{i,j}, \delta'_{i,j} \in \R$ 
with $\delta_{i,j}\delta'_{i,j}= -1$ such that $(G^0_{i,j}, \delta_{i,j} H^0_{j,i})$ 
and $(G^0_{i,j}, \delta'_{i,j} H^0_{j,i})$ are 
orthogonal eigenvectors of $\nabla^2 J(W, S)$ 
with corresponding eigenvalues
\begin{equation}
\rho_{i,j}  = \textstyle
	\frac{\omega_j}{2} - \sqrt{ \sigma_i^2 +\left(\frac{\omega_j}{2}\right)^2} <0,
		 \quad\textrm{and}\quad
\rho_{i,j}' = \textstyle
	\frac{\omega_j}{2}  + \sqrt{ \sigma_i^2 +\left(\frac{\omega_j}{2}\right)^2} > 0.
		 \label{eq:rho+-}
\end{equation}
(b) $(G^0_{i,j}, \vzero)$ and $(\vzero, H^0_{i,j})$,   
$i \in \Int {r+1} m,$ $j\in \Int{1} k,$ are orthogonal 
eigenvectors of $\nabla^2J(W, S)$ with eigenvalues 
$\omega_j,$ and $0$ respectively.

(c) $(\vzero, H^0_{j,i}),$ $i \in \Int {m+1} n,$ $j\in \Int{1} k,$
is an eigenvector of $\nabla^2J(W, S)$ with eigenvalue $0.$

(d) $\lambda_{\min}(\nabla^2 J(W,S)) = 
	\frac{\omega_k}{2} - \sqrt{ \sigma_1^2 
	+\left(\frac{\omega_k}{2}\right)^2} <0.$
\end{theorem}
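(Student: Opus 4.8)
The plan is to evaluate the Hessian map \eqref{eq:hessianmap} at $(W,S)=(\vzero,C_0^TV_0^T)$ and then diagonalize it block by block. Since $W=\vzero$, $E=WS-X=-X$, and $SS^T=C_0^TC_0=Z\Omega Z^T$, the Hessian collapses to
\[
\nabla^2 J(W,S)[(G,H)] = \bigl(\, G\,Z\Omega Z^T - X H^T,\ -G^T X \,\bigr).
\]
I will combine this with three identities coming from the SVD \eqref{eq:fullsvdX} and from $Z\in\OG k$: namely $z_j^TZ\Omega Z^T=\omega_j z_j^T$; $Xv_i=\sigma_i u_i$ and $u_i^TX=\sigma_i v_i^T$ for $i\in\Int 1m$ (with $\sigma_i=0$ when $i>r$); and $Xv_i=\vzero$ for $i\in\Int{m+1}n$. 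Substituting $(G,H)=(G^0_{i,j},\vzero)$ and $(G,H)=(\vzero,H^0_{j,i})$ into the displayed formula shows that, for each $i\in\Int 1m$ and $j\in\Int 1k$, the two-dimensional subspace $\mathcal P_{i,j}=\mathrm{span}\{(G^0_{i,j},\vzero),(\vzero,H^0_{j,i})\}$ is invariant under $\nabla^2J(W,S)$; since each of $(G^0_{i,j},\vzero)$ and $(\vzero,H^0_{j,i})$ is a rank-one matrix built from unit vectors, these two vectors are orthonormal, and the restriction of $\nabla^2J(W,S)$ to $\mathcal P_{i,j}$ is represented in this basis by the symmetric matrix $\bigl(\begin{smallmatrix}\omega_j & -\sigma_i\\ -\sigma_i & 0\end{smallmatrix}\bigr)$. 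Separately, $(\vzero,H^0_{j,i})$ for $i\in\Int{m+1}n$, $j\in\Int 1k$, maps to $\vzero$.

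Parts (a)--(c) then follow by reading off these blocks. For $i\in\Int 1r$ the block $\bigl(\begin{smallmatrix}\omega_j & -\sigma_i\\ -\sigma_i & 0\end{smallmatrix}\bigr)$ has determinant $-\sigma_i^2<0$, so it has exactly one positive and one negative eigenvalue, namely the two roots $\rho_{i,j}=\tfrac{\omega_j}{2}-\sqrt{\sigma_i^2+(\omega_j/2)^2}<0$ and $\rho'_{i,j}=\tfrac{\omega_j}{2}+\sqrt{\sigma_i^2+(\omega_j/2)^2}>0$ of $\lambda^2-\omega_j\lambda-\sigma_i^2=0$; its eigenvectors, scaled so the first coordinate equals $1$, are $(1,\delta_{i,j})$ and $(1,\delta'_{i,j})$ with $\delta_{i,j}=-\sigma_i/\rho_{i,j}$ and $\delta'_{i,j}=-\sigma_i/\rho'_{i,j}$, and since the product of the two roots is $\rho_{i,j}\rho'_{i,j}=-\sigma_i^2$ we get $\delta_{i,j}\delta'_{i,j}=-1$ (equivalently, the two eigenvectors of the symmetric block are orthogonal). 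Lifting back to $\mathcal P_{i,j}$ gives (a). For $r<i\le m$ the block is $\mathrm{diag}(\omega_j,0)$, so $(G^0_{i,j},\vzero)$ and $(\vzero,H^0_{j,i})$ are eigenvectors with eigenvalues $\omega_j$ and $0$, giving (b); and the remark above about $i\in\Int{m+1}n$ gives (c).

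For (d) I will argue that the eigenvectors produced in (a)--(c) already form an orthogonal basis of $\WS$, so they display the whole spectrum. Orthogonality across distinct index pairs is immediate, because the $G$-parts that appear range over the orthonormal family $\{u_iz_j^T:i\in\Int 1m,\,j\in\Int 1k\}$ and the $H$-parts over $\{z_jv_i^T:i\in\Int 1n,\,j\in\Int 1k\}$, while orthogonality within a pair was noted above; and counting gives $2rk+2(m-r)k+(n-m)k=k(m+n)=\dim\WS$. Hence the complete multiset of eigenvalues is $\{\rho_{i,j},\rho'_{i,j}:i\in\Int 1r,\,j\in\Int 1k\}$ together with copies of $\omega_j$ ($j\in\Int 1k$) and of $0$, and the only negative ones are the $\rho_{i,j}$; therefore $\lambda_{\min}(\nabla^2J(W,S))=\min_{i\in\Int 1r,\,j\in\Int 1k}\rho_{i,j}$. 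From $\rho_{i,j}=\tfrac{\omega_j}{2}-\sqrt{\sigma_i^2+(\omega_j/2)^2}$ it is clear that $\rho_{i,j}$ is strictly decreasing in $\sigma_i$, and from the equivalent form $\rho_{i,j}=-\sigma_i^2\big/\bigl(\tfrac{\omega_j}{2}+\sqrt{\sigma_i^2+(\omega_j/2)^2}\bigr)$ that it is strictly increasing in $\omega_j$; so the minimum occurs at $\sigma_i=\sigma_1$, $\omega_j=\omega_k$, giving exactly the claimed $\tfrac{\omega_k}{2}-\sqrt{\sigma_1^2+(\omega_k/2)^2}$, which is negative since $\sigma_1>0$. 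Every computation here is routine matrix algebra; the one place that genuinely needs care is the bookkeeping in (d) --- checking that the exhibited eigenvectors are mutually orthogonal and that their number equals $\dim\WS$ --- since that is what licenses the conclusion that we have found all eigenvalues and hence that minimizing over the $\rho_{i,j}$ really gives $\lambda_{\min}$.
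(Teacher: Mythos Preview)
Your proof is correct and follows essentially the same approach as the paper: both compute $\nabla^2 J(W,S)[(G,H)]=(GZ\Omega Z^T-XH^T,-G^TX)$ and diagonalize by pairing $G^0_{i,j}$ with $H^0_{j,i}$. Your $2\times 2$ block presentation is a slightly cleaner packaging of the same computation the paper does by directly solving for $\delta$ in $(G^0_{i,j},\delta H^0_{j,i})$, and for (d) you use the rationalized form $\rho_{i,j}=-\sigma_i^2/(\tfrac{\omega_j}{2}+\sqrt{\cdots})$ where the paper differentiates; both are valid. One point where your write-up is actually more careful than the paper's: you explicitly verify that the exhibited eigenvectors are $k(m+n)$ in number and mutually orthogonal, hence span $\WS$, which is what justifies that the $\rho_{i,j}$ are \emph{all} the negative eigenvalues --- the paper's proof of (d) leaves this implicit.
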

\begin{proof}
From \eqref{eq:hessianmap} and the above definitions,
$\nabla^2 J(W,S)[(G,H)] = (G Z \Omega Z^T -XH^T, -G^T X)$.\\
(a) $\nabla^2 J(W,S)[(G^0_{i,j}, \delta H^0_{j,i})] 
= (u_i z_j^T Z \Omega Z^T -\delta Xv_i z_j^T, - z_j u_i^T X )
= ((\omega_j - \delta \sigma_i) G^0_{i,j}, -\sigma_i H^0_{j,i}).$
This has the form $\rho(G^0_{i,j},\delta H^0_{j,i})$ 
if and only if 
\begin{equation}\label{eq:rho1}
\rho = \omega_j -\delta \sigma_i = -\sigma_i/\delta.
\end{equation}
Since $\sigma_i>0,$ this equivalent to
$\delta^2 - \frac{\omega_j}{\sigma_i} \delta - 1 =0,$ yielding real solutions
$\delta_{i,j} = \frac{\omega_j}{\sigma_i} + \frac{1}{\sigma_i}\sqrt{\sigma_i^2 + \left (\frac{\omega_i}{2}\right)^2}$ and 
$\delta'_{i,j} = \frac{\omega_j}{2\sigma_i} -  \frac{1}{\sigma_i}\sqrt{\sigma_i^2 + \left (\frac{\omega_i}{2}\right)^2}.$ Set $a= \frac{\omega_j}{2\sigma_i}$ and
$b = \frac{1}{\sigma_i}\sqrt{\sigma_i^2 + \left (\frac{\omega_i}{2}\right)^2}.$
Then $\delta_{i,j} = a+b$ and $\delta'_{i,j} =a-b$. 
Simple algebra verifies that $\delta_{i,j}\delta'_{i,j} = a^2 - b^2 =-1.$
Orthogonality of the eigenvectors then follows by Lemmas \ref{lem:orthvects} and \ref{lem:aap}.
Substituting $\delta_{i,j}$ and $\delta'_{i,j}$ into \eqref{eq:rho1}
yields \eqref{eq:rho+-}.

(b) For $i\in \Int {r+1} m,$ $\sigma_i=0$. Hence $u_i^T X =\vzero$ and $Xv_i = \vzero.$ 
Then $\nabla^2 J(W,S)[(G^0_{i,j}, \vzero)]= (u_i z_j^T Z \Omega Z^T, - z_j u_i^T X)
= \omega_j ( G^0_{i,j}, \vzero),$ and 
$\nabla^2 J(W,S)[(\vzero, H^0_{j,i})] = (-X v_i z_j^T, \vzero) = 0 (\vzero, H^0_{j,i}).$

(c) For $i\in \Int {m+1} n,$ $j\in \Int 1 k,$
$\nabla^2 J(W,S) [(\vzero, H^0_{j,i})] = (-X v_i z_j^T, \vzero) = 0(\vzero, H^0{j,i}).$

(d) Each negative eigenvalue in \eqref{eq:rho+-} 
has the form $\rho=-\sqrt{x^2 + y^2} +y$ with $y>0$. 
Taking the derivative w.r.t $y$ yields,
$1 -y /\sqrt{x^2+y^2} >0,$ 
i.e., $\rho$ is monotonically increasing in $y.$
Hence the minimum eigenvalue is attained with $i=1$ and $j=k.$ 
\end{proof}

Theorem \ref{sec:eig_cps} part (a) confirms (as expected from \cite{baldi1989neural}) that the critical points \eqref{eq:cp@0}
are all strict saddles. 
The new contribution is the explicit formulas for the eigenvalues
of the Hessian and the insight provided by part (d) into 
how the least eigenvalue changes over this family of critical points. 
Specifically, $\lambda_{\min}(\nabla^2 J(W,S)) = -\sigma_1$
when $\omega_k=0,$ and is monotone increasing as 
$\omega_k$ increases, asymptotically to zero as $\omega_k\ra \infty.$ 
So without a constraint on the size of $\omega_k,$ 
there is no $\gamma<0$ such that 
$\lambda_{\min}(\nabla^2 J(W,S)) < \gamma $ 
over this family of strict saddles.

\subsection{The Canonical Points}  \label{sec:eig_cps}

We now examine the landscape around the canonical points specified 
by \eqref{eq:bsp}. The analysis builds on that given in \S\ref{sec:eig_0C0}. Complete details are given in Appendix \ref{sec:HE@CP}. Here we summarize and discuss the main results.  

We say that a canonical point   
is \nomiss\ if $\lambda_j =\sigma_j$, $j\in \Int 1 q$.
This holds if and only if the $q$ columns of $\bar U$ are 
left singular vectors of $X$ for a set of its $q$ largest singular values.
If this does not hold, then we say that is $(W,S)$ \missatp.
In this case, there exists a least integer $p\in \Int 1 q$ such that 
$\lambda_p <\sigma_p$. 

\begin{theorem} \label{thm:bsp_class}
Let  $1\leq q \leq \min\{k,m\}$ and $(W_c, S_c)$ be a canonical point of the form \eqref{eq:bsp}. If $(W_c,S_c)$ is not maximal, let $p\in \Int 1 q$ denote the least 
integer with $\lambda_p<\sigma_p.$

(a) If $q=m$, or $q=k < m$ and $(W_c,S_c)$ is \nomiss, 
then $(W_c,S_c)$ is a global minimum of $J$. 

(b) If $q=k < m$ and $(W_c, S_c)$ is  \missatp, 
then $(W_c,S_c)$ is a strict saddle with 
	\begin{equation}\label{eq:GCSS_T2}
	\textstyle
	\!\!\!\!
	\lambda_{\min} (\nabla^2 J(W_c,S_c) ) = 
	 - \frac{\sigma_p^2-\lambda_k^2}
	{\frac{\lambda_k^2+1}{2}  +
	\sqrt{\sigma_p^2 + \left(\frac{\lambda_k^2-1}{2}\right)^2} 
	},
	\end{equation}
(c) If $q < \min\{k,m\}$, then $(W_c,S_c)$ is a strict saddle
with $\lambda_{\min} (\nabla^2 J(W_c,S_c) )$ given by
\begin{align}\label{eq:GCSS_T1}
\kern-8pt
\begin{cases}
	\textstyle \frac{\omega_{k-q}}{2} - \sqrt{ \sigma_{q+1}^2 
	+\left(\frac{\omega_{k-q}}{2}\right)^2}, & \!\!\!(W_c,S_c) \textrm{ maximal;}\\
\min \Big \{ \textstyle \frac{\omega_{k-q}}{2} - \sqrt{\sigma_{p}^2 +\left(\frac{\omega_{k-q}}{2}\right)^2},
	\ \shalf \! \left (\lambda_q^2 +1 
	- \sqrt{\left(\lambda_q^2 - 1\right)^2 + 4\sigma_p^2} \right )
 \Big \}, &\textrm{\!\!\!otherwise.}
\end{cases}
\end{align}
\end{theorem}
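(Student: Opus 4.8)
The plan is to reduce everything to a Hessian-map computation at the canonical point $(W_c,S_c)$ in \eqref{eq:bsp}, exactly as was done for the origin family in \S\ref{sec:eig_0C0}, and then read off the eigenvalues. First I would write $E = W_cS_c - X$ explicitly: since $\bar U^T X = \Lambda \bar V^T$ and $W_cS_c = \bar U\Lambda\bar V^T$, the residual is $E = \bar U\Lambda\bar V^T - U\Sigma V^T$, which is supported on the left-singular directions $u_i$, $i\notin\{$indices chosen for $\bar U\}$, together with the $C_0$ block. Plugging $W_c$, $S_c$, $E$ into \eqref{eq:hessianmap} produces a block-structured linear map on $\R^{m\times k}\times\R^{k\times n}$. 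The natural basis to diagonalize against is, as in \S\ref{sec:eig_0C0}, the rank-one tensors $u_i z_j^T$ and $z_j v_\ell^T$, where the $z_j$ now diagonalize $S_cS_c^T$ (equivalently the relevant Gram matrices involving $\Lambda$ and $C_0$, recorded in the auxiliary lemmas in Appendix \ref{sec:HE@CP}). The Hessian map will decouple into (i) a family of $2\times 2$ blocks coupling a $u_i z_j^T$ mode to a $z_j v_i^T$ mode — these yield the $\pm$ pairs with eigenvalues of the form $\frac{y}{2}\pm\sqrt{x^2+(\frac y2)^2}$ familiar from \eqref{eq:rho+-} — and (ii) purely "horizontal'' or "vertical'' modes living in the null space of $W_c$ or in directions annihilated by $X$, contributing nonnegative or zero eigenvalues.

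Next I would split into the three cases according to the relation of $q$ to $k$ and $m$. For (a), maximality ($\lambda_j=\sigma_j$) with $q=m$ or $q=k<m$ means $J(W_c,S_c)$ attains the known SVD-truncation optimum $\tfrac12\sum_{i>k}\sigma_i^2$ (or $0$ when $q=m$), so it is a global minimum; one just checks this value against the closed form in Theorem \ref{thm:bsp}(a). For (b), $q=k<m$ and not maximal: here $W_c$ has full column rank, so there are no horizontal null modes, and the only source of negative curvature is the $2\times 2$ block at the smallest captured singular value $\lambda_k^2$ against the largest missed one $\sigma_p^2$. I would isolate that block, compute its eigenvalues, identify the negative one, and simplify to \eqref{eq:GCSS_T2} — this is the $\rho_{i,j}$ computation of \S\ref{sec:eig_0C0} specialized to $\sigma_i^2\to\sigma_p^2$ and $\omega_j\to \lambda_k^2$, but now with the extra $+1$ coming from $W_c^TW_c = I$ on the captured block (rather than $W_c=\vzero$). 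For (c), $q<\min\{k,m\}$: now $W_c$ has a $(k-q)$-dimensional null space, so in addition to the captured-singular-value blocks there is a whole family of modes exactly analogous to Theorem \ref{thm:ev0c0}(a) built on the $C_0$-eigenvalues $\omega_1\ge\cdots\ge\omega_{k-q}$, contributing negative eigenvalues $\tfrac{\omega_{k-q}}{2}-\sqrt{\sigma_{q+1}^2+(\tfrac{\omega_{k-q}}{2})^2}$ in the maximal case. In the non-maximal subcase I would compare the most-negative eigenvalue from the $C_0$-family (evaluated at the worst missed index $\sigma_p$) against the most-negative eigenvalue from the captured block at $\lambda_q^2$, and take the min — as in \eqref{eq:GCSS_T1}. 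Monotonicity of $y\mapsto \tfrac y2-\sqrt{x^2+(\tfrac y2)^2}$ in $y$ (already used in the proof of Theorem \ref{thm:ev0c0}(d)) and monotonicity in $x$ justify that these are the extremal blocks and that no other mode can undercut them.

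The last step is to confirm completeness: the rank-one tensors I have written down span all of $\R^{m\times k}\times\R^{k\times n}$ (counting dimensions $mk+kn$), they are mutually orthogonal — this needs the orthogonality lemmas (Lemmas \ref{lem:orthvects}, \ref{lem:aap} and their canonical-point analogues in Appendix \ref{sec:HE@CP}), since the two vectors in a $\pm$ pair share the same $u_iz_j^T$ component and one must check the $\delta,\delta'$ combination makes them orthogonal — and each is an eigenvector, so I have the full spectrum and can legitimately take the minimum over the list. I expect the main obstacle to be the bookkeeping in case (c): keeping straight which $z_j$ blocks couple to captured versus missed singular directions, handling the redundant $S_b$-type directions (which contribute only zero eigenvalues but must be accounted for), and verifying that the two candidate minimizers in \eqref{eq:GCSS_T1} genuinely dominate every other $2\times 2$ block — i.e. that replacing $\sigma_{q+1}$ by $\sigma_p$ in the $C_0$-family and evaluating the captured block at the \emph{smallest} captured value $\lambda_q^2$ both correctly push toward the minimum. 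This is exactly where the two monotonicity facts, plus $\sigma_p\ge\sigma_{q+1}$ and $\lambda_q\le\lambda_p<\sigma_p$, get combined, and it is the part of the argument most prone to off-by-one index errors. Everything else is a direct, if lengthy, specialization of the $(\vzero,C_0^TV_0^T)$ analysis.
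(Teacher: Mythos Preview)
Your plan is correct and matches the paper's approach almost exactly: the paper's proof of this theorem simply invokes the appendix computations (\S\ref{sec:eig_frcp} and \S\ref{sec:cp_q<k}), which carry out precisely the rank-one tensor diagonalization you describe, obtain the $2\times 2$ block eigenvalues via Lemmas \ref{lem:g1}--\ref{lem:g3} and \ref{lem:qkC0}, and then select the extremal block by the monotonicity arguments you mention (Theorem \ref{thm:H_mineig}, Corollary \ref{cor:T2_lambda_min} at $a=1$, and equations \eqref{eq:lam_min_q<k}--\eqref{eq:lam_min_q<k_not}). Your anticipated bookkeeping difficulty in case (c) is real and is exactly where the paper spends its effort, splitting into the lifted eigenvalues (Lemma \ref{lemma:relate-evals}) versus the $C_0$-family (Lemma \ref{lem:qkC0}).
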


\begin{proof}
(a) If $q=m,$ then by Theorem  \ref{thm:bsp} 
$J (W_c, S_c) = \sum_{i=1}^r \sigma_i^2 - \sum_{j=1}^m \lambda^2_j  
= - \sum_{i=r+1}^r \sigma_i^2 = 0$. 
If $q=k<m$ and $(W_c,S_c)$ be \nomiss, then $\{\lambda_j\}_{j=1}^k$ is 
a set of $k$ largest singular values of $X.$
Hence 
$J (W_c, S_c) = \sum_{i=1}^m \sigma_i^2 - \sum_{j=1}^k \lambda^2_j 
=  \sum_{i=k+1}^m \sigma_i^2$ achieves its lower bound.

(b) A detailed analysis of this situation is given in \S\ref{sec:eig_frcp}. 
See Corollary \ref{cor:T2_lambda_min} with $a=1$.

(c) A detailed analysis of this situation is given in \S\ref{sec:cp_q<k}. 
See equations \eqref{eq:lam_min_q<k} and \eqref{eq:lam_min_q<k_not}.
\end{proof}

Theorem \ref{thm:bsp_class} confirms that the canonical points \eqref{eq:bsp} are either global minima or strict saddles. 
By Theorem \ref{thm:orbits} the same holds for all critical 
points. This is the main result in \cite{baldi1989neural}.
The new contributions are the explicit formulas for the eigenvalues
of the Hessian (see Appendix \ref{sec:HE@CP} for full details) 
and for the minimum eigenvalue of the Hessian at a canonical strict saddle (parts (b) and (c)).
The latter expressions give insight into how the least eigenvalue 
of the Hessian changes over the family of canonical points. 
For example, if the rank of $W_c$ is $\min\{k,m\}$ 
(no zero columns in $W$), the minimum eigenvalue of the Hessian depends on the separation of the squared values of two distinct nonzero singular values of $X,$ $\sigma_p$ and $\lambda_k.$ In the best case,
$\sigma_p$ is large and $\lambda_k$ is very small and the minimum eigenvalue is approximately $-\sigma_p.$ 
This minimum value increases as $\lambda_k$ increases.
For typical data, this suggests that the worst case is when 
$k$ is large, $\sigma_p=\sigma_k$ and $\lambda_k=\sigma_{k+1}.$
 
In contrast, when the rank of $W_c$ is $q< \min\{k,m\}$ 
(there are zero columns in $W_c$), and $(W_c,S_c)$ is \nomiss,
the minimum eigenvalue depends only the size of a nonzero singular value of $X$ and a singular value of the matrix $C_0^T C_0.$ 
This is given by the first line in \eqref{eq:GCSS_T1}.
Notice the similarity of this equation to the first equation in 
\eqref{eq:rho+-}. The same caveats given there, also apply here.
When $(W_c,S_c)$ is not maximal, there are two ways that negative
eigenvalues can arise and this results in two expressions competing
to provide the least eigenvalue. This is displayed the second line of
\eqref{eq:GCSS_T1}. When $\omega_{k-q}=0,$ the first term reduces to $-\sigma_p$ and this is the minimum of the two terms. As the value
$\omega_{k-q}$ increases so does the first term. Eventually, the second term is the least and this is a constant that 
does not depend on  $\omega_{k-q}.$

Using Theorem \ref{thm:orbits} we can give a negative upper bound on 
$\lambda_{\min}\nabla^2 J$ for any strict saddle $(W,S).$

\begin{corollary} \label{thm:s-pt-ebound}
Let $(W_c,S_c)$ be a canonical strict saddle and $A\in \GLG k.$
Then 
\begin{equation}
\label{eq:GSS_eigbnd}
\lambda_{\min} (\nabla^2 J(L_A(W_c,S_c)) ) \leq 
\frac{\lambda_{\min} (\nabla^2 J(W_c,S_c) )}{\max\{ \lambda_{\max}(AA^T), \lambda_{\min}^{-1} (AA^T)\} } <0. 
\end{equation}
\end{corollary}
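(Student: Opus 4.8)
The plan is to read this off directly from Theorem~\ref{thm:orbits}(d). First I would instantiate that bound at the base point $(W,S)=(W_c,S_c)$ and at the given $A\in\GLG k$; this reproduces verbatim the inequality
\[
\lambda_{\min}(\nabla^2 J(L_A(W_c,S_c)))
\le
\frac{\lambda_{\min}(\nabla^2 J(W_c,S_c))}{\max\{\lambda_{\max}(AA^T),\ \lambda_{\min}^{-1}(AA^T)\}},
\]
so all that remains is to show the right-hand side is strictly negative.

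Second, I would supply the sign information. Since $(W_c,S_c)$ is by hypothesis a canonical strict saddle, the very definition of a strict saddle gives $\lambda_{\min}(\nabla^2 J(W_c,S_c))<0$; if one prefers an explicit witness, Theorem~\ref{thm:bsp_class}(b)--(c) exhibits this quantity as a manifestly negative expression in the singular values. Third, I would record the elementary fact (already noted in the remark following Theorem~\ref{thm:orbits}) that $\max\{\lambda_{\max}(AA^T),\ \lambda_{\min}^{-1}(AA^T)\}\ge 1$ for every $A\in\GLG k$: if $\lambda_{\min}(AA^T)\le 1$ then $\lambda_{\min}^{-1}(AA^T)\ge 1$, and otherwise $\lambda_{\max}(AA^T)\ge\lambda_{\min}(AA^T)>1$. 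A strictly negative numerator divided by a positive denominator that is at least $1$ is again strictly negative, which yields the claimed ``$<0$'' and completes the argument.

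There is essentially no obstacle here; this is precisely why the statement appears as a corollary rather than a theorem. The only point that warrants a moment's care is the direction of the scaling: because the numerator is negative, dividing by a quantity $\ge 1$ moves the value toward zero, so the displayed estimate is in fact a weaker bound than the base-point value rather than a sharper one — but the strict negativity is preserved regardless. For completeness one may observe that both inequalities become equalities exactly when $A\in\OG k$, in which case the denominator equals $1$ and $\lambda_{\min}(\nabla^2 J)$ is constant along the suborbit, consistent with Theorem~\ref{thm:orbits}(c).
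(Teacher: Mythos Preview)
Your proposal is correct and is essentially identical to the paper's own proof: the paper also invokes Theorem~\ref{thm:orbits}(d) at $(W_c,S_c)$ and appeals to Theorem~\ref{thm:bsp_class} for the strict negativity of the numerator, noting that the denominator equals $\vertt{L_A}^2$. Your additional remarks about the denominator being at least $1$ and about equality when $A\in\OG k$ are accurate and merely elaborate what the paper leaves implicit.
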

\begin{proof}
By Theorem \ref{thm:orbits}, dividing the negative upper bound on 
$\lambda_{\min}\nabla^2 J(W_c,S_c)$ in Theorem \ref{thm:bsp_class} 
by  $\vertt{L_A}^2$ gives a negative upper bound on 
$\lambda_{\min}(\nabla^2J(L_A(W_c,S_c)))$. 
\end{proof}

\section{Negative Upper Bound on $\mathbf{\lambda_{\min} (\nabla^2J)}$ Over Strict Saddles}
  
For a \nomiss\ canonical point $(W_c,S_c)$ with $q<\min\{k,m\}$, 
$\lambda_{\min}(\nabla^2 J(W_c,S_c))$ given in the first line of \eqref{eq:GCSS_T1} depends on a singular value of $C_0.$ 
Since there is no a priori bound on the singular values of $C_0$, 
there is no uniform negative upper bound for 
$\lambda_{\min}(\nabla^2J)$ over all canonical strict saddles.
By this we mean that for no $\gamma<0$ is it the case that 
$\lambda_{\min}(\nabla^2J) < \gamma$ 
for all canonical strict saddles. 
Even if $C_0=\vzero$, there is a second issue.
Since $\vertt{A}^2$ is unbounded over $\GLG k,$
the bound in Corollary \ref{thm:s-pt-ebound} can be arbitrarily close
to $0.$. That does not say, however, that a uniform bound does not exist. 
This section examines this issue.

We first prove the negative result that even when $C_0=\vzero$, 
there is no uniform negative upper bound for $\lambda_{\min}(\nabla^2J(W,S))$ over all strict saddles.  

\begin{theorem} \label{thm:nounub}
For given $k < m,$ and any canonical strict saddle 
$(W_c,S_c)$ with $\rank(W_c)=k,$ 
there is no $\gamma <0$ such that 
$\lambda_{\min}(\nabla^2J(W,S)) \leq \gamma$ 
for all $(W,S)\in \GLGorb(W_c,S_c)$.
\end{theorem}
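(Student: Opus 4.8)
The plan is to use the orbit bound from Theorem~\ref{thm:orbits}(d) together with a carefully chosen one-parameter family of group elements $A_t \in \GLG k$ that makes the distortion factor $\vertt{L_{A_t}}^2 = \max\{\lambda_{\max}(A_tA_t^T), \lambda_{\min}^{-1}(A_tA_t^T)\}$ blow up, and then to \emph{verify} that along this family the minimum eigenvalue of the Hessian actually tends to $0$ (not merely that the upper bound does). A natural choice is $A_t = \diag(t, 1, \dots, 1)$ for $t \to \infty$ (or $t\to 0$), for which $\lambda_{\max}(A_tA_t^T) = t^2 \to \infty$, so Theorem~\ref{thm:orbits}(d) gives $\lambda_{\min}(\nabla^2 J(L_{A_t}(W_c,S_c))) \geq \lambda_{\min}(\nabla^2 J(W_c,S_c))/t^2 \to 0^-$. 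Since $L_{A_t}(W_c,S_c) \in \GLGorb(W_c,S_c)$ for every $t>0$ and $\lambda_{\min}(\nabla^2 J(L_{A_t}(W_c,S_c))) < 0$ (by Theorem~\ref{thm:orbits}(a), every point on the orbit of a strict saddle is a strict saddle), the values $\lambda_{\min}(\nabla^2 J(L_{A_t}(W_c,S_c)))$ form a family of strictly negative numbers with supremum $0$, so no $\gamma<0$ can bound them all from above.

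Concretely, the steps in order are: (i) recall from Theorem~\ref{thm:orbits}(a) that every point of $\GLGorb(W_c,S_c)$ is a strict saddle, hence has $\lambda_{\min}(\nabla^2 J) < 0$; (ii) recall from Theorem~\ref{thm:orbits}(d) the inequality $\lambda_{\min}(\nabla^2 J(L_A(W_c,S_c))) \leq \lambda_{\min}(\nabla^2 J(W_c,S_c)) / \max\{\lambda_{\max}(AA^T),\lambda_{\min}^{-1}(AA^T)\}$; (iii) exhibit the family $A_t = \diag(t,1,\dots,1)$, $t \geq 1$, note $A_t \in \GLG k$, compute $A_t A_t^T = \diag(t^2,1,\dots,1)$ so $\lambda_{\max}(A_tA_t^T) = t^2$ and $\lambda_{\min}(A_tA_t^T) = 1$, hence the distortion factor equals $t^2$; (iv) conclude $0 > \lambda_{\min}(\nabla^2 J(L_{A_t}(W_c,S_c))) \geq \lambda_{\min}(\nabla^2 J(W_c,S_c))/t^2 \to 0$ as $t\to\infty$; (v) given any $\gamma<0$, choose $t$ large enough that $\lambda_{\min}(\nabla^2 J(W_c,S_c))/t^2 > \gamma$; then $L_{A_t}(W_c,S_c)$ is a point of $\GLGorb(W_c,S_c)$ whose Hessian has minimum eigenvalue in $(\gamma,0)$, contradicting $\lambda_{\min}(\nabla^2 J) \leq \gamma$ on the whole orbit. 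This establishes the theorem.

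I do not expect a genuine obstacle here, since the two ingredients (all orbit points are strict saddles; the scaling bound in Theorem~\ref{thm:orbits}(d)) are already supplied. The only point requiring a little care is that one must use \emph{both} directions: the strict negativity $\lambda_{\min}(\nabla^2 J) < 0$ on the orbit rules out $\gamma = 0$ being vacuously irrelevant, while the squeeze toward $0$ rules out any $\gamma<0$. One should also double-check the edge case $k=1$: if $k=1$ then $\GLG k = \GLG 1 = \R\setminus\{0\}$ acts by $(W,S)\mapsto(aW, a^{-1}S)$, and $A_t = (t)$ still gives distortion factor $t^2$, so the argument is unaffected; and the hypothesis $\rank(W_c) = k$ is used only to guarantee (via Theorem~\ref{thm:bsp_class}(b) or (c), resp.\ the hypothesis that $(W_c,S_c)$ is a strict saddle) that the orbit in question really is an orbit of strict saddles, so that $\lambda_{\min}(\nabla^2 J(W_c,S_c)) < 0$ and the right-hand side of the bound is a negative quantity divided by $t^2$. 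If one prefers an even more self-contained argument, one may alternatively cite the explicit expression for $\lambda_{\min}(\nabla^2 J(W_c,S_c))$ in Theorem~\ref{thm:bsp_class}(b), but this is not needed — the sign information plus the scaling bound suffice.
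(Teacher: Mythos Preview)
There is a genuine gap. In step~(iv) you write
\[
0 > \lambda_{\min}\bigl(\nabla^2 J(L_{A_t}(W_c,S_c))\bigr) \;\geq\; \frac{\lambda_{\min}\bigl(\nabla^2 J(W_c,S_c)\bigr)}{t^2},
\]
but Theorem~\ref{thm:orbits}(d) gives the inequality in the \emph{opposite} direction: it is an \emph{upper} bound on $\lambda_{\min}(\nabla^2 J(L_A(W_c,S_c)))$, namely $\lambda_{\min}(\nabla^2 J(L_A(W_c,S_c))) \leq \lambda_{\min}(\nabla^2 J(W_c,S_c))/M$, not a lower bound. You correctly quoted the $\leq$ in step~(ii), then silently flipped it in~(iv). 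Knowing only that the upper bound $\mu/t^2$ tends to $0^-$ does not force the quantity itself toward~$0$; a priori $\lambda_{\min}(\nabla^2 J(L_{A_t}(W_c,S_c)))$ could sit at some fixed $\gamma_0<0$ for all~$t$ and still satisfy $\gamma_0 \leq \mu/t^2$ for large~$t$. (Applying (d) in the reverse direction, with $A_t^{-1}$, only yields the useless lower bound $\mu\, t^2 \leq \lambda_{\min}(\nabla^2 J(L_{A_t}(W_c,S_c)))$.) You in fact flagged exactly this issue in your opening paragraph (``not merely that the upper bound does''), but then did not supply the missing verification.

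The paper's proof closes this gap by using not a bound but an \emph{equality}: it takes the scalar curve $A = aI_k$, $a>0$, and invokes Corollary~\ref{cor:T2_lambda_min}, which gives an exact closed-form expression for $\lambda_{\min}\bigl(\nabla^2 J(W_c a, a^{-1}S_c)\bigr)$. From that formula the denominator visibly tends to $+\infty$ as $a\to 0^+$ or $a\to\infty$, so the minimum eigenvalue genuinely tends to~$0^-$. Your final remark that ``the sign information plus the scaling bound suffice'' is therefore incorrect; one really does need the explicit eigenvalue computation (or some other two-sided control) along the curve.
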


\begin{proof}
Consider the curve of strict saddles $\{(W_c a, a^{-1} S_c), a>0\}$ 
in $\GLGorb(W_c,S_c)$. 
Corollary \ref{cor:T2_lambda_min} gives an expression for 
$\lambda_{\min} (\nabla^2J(W_ca, a^{-1} S_c)).$
The expression indicates that   
$\lambda_{\min} (\nabla^2J(W_ca, a^{-1} S_c))$ can be made 
arbitrary close to $0$ by making $a$ sufficiently 
small or sufficiently large.
\end{proof}

\subsection{A Uniform Negative Bound for $\mathbf{\lambda_{\min} (\nabla^2J)}$ 
Over Strict Saddles in $\cM_\vzero$} \label{sec:manifold}

We now restrict attention to an interesting 
subset of $\WS$ and show that there is a uniform negative 
upper bound for the minimum eigenvalue of the Hessian at all critical points in this subset.

Let $\cM_C \triangleq \{ (W,S)\colon W^TW-SS^T =C\},$ 
where $C\in \R^{ k\by k}$ is a symmetric matrix. 
$\cM_C$ is of interest for several reasons.
First, the factorization problem \eqref{eq:J} permits imbalance between 
$W$ and $S$ in the sense  that $A\in \GLG k$ can make $WA$ very large (resp. small) while making $A^{-1} S$ very small (resp. large) without changing the value of the objective. However, if $(W,S)\in \cM_C$, the difference between the norms of $W$ and $S$ is bounded: $\|W\|_F^2 - \|S\|_F^2 = \trace(C)$. 
In particular, if $C=\vzero$, $\|W\|_F^2= \|S\|_F^2$.  
This is referred to as a balance condition \cite{AroraICML2018a, DuNIPS2018}.
Second, the term $C_0^T V_0^T$  in \eqref{eq:bsp} is redundant 
and we have no a priori bound on its value.  
We show that for critical points in $\cM_\vzero$, $C_0=\vzero.$ 
Third, it is known that the $\cM_C$ is invariant under gradient flow. 
An initial value for $(W,S)$ specifies $C$, and the gradient flow o.d.e. 
$(W_t, S_t) = -\nabla J(W_t, S_t))$ ensures $(W_t, S_t) \in \cM_C$ 
for $t\geq 0$ \cite[Theorem 1]{AroraICML2018a}.
Lemma \ref{lem:invariance} gives a self-contained proof of this result.

Motivated by the above, we now focus on critical points 
in $\cM_\vzero$.  
Each critical point in $\cM_\vzero$ must be in the orbit of 
some canonical point. We show below that the orbit of a 
canonical point intersects $\cM_\vzero$ 
if and only if $\Lambda$ is invertible and $C_0=\vzero$.

\begin{theorem} \label{thm:M0intersection}
For a canonical point $(W_c, S_c)$ of the form \eqref{eq:bsp},

(a) There exists $A \in \GLG k$ such that $L_A(W_c, S_c) \in \cM_\vzero$ 
if and only if $\Lambda$ is invertible and $C_0=\vzero$. 

(b) If  $L_A(W_c, S_c) \in \cM_\vzero$, then for each $Q\in \OG k$, 
$L_{AQ}(W_c, S_c) \in \cM_\vzero$.
\end{theorem}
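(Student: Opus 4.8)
The plan is to prove the two directions of part (a) separately — the ``if'' direction by exhibiting one explicit $A$, the ``only if'' direction by a rank computation whose engine is the balance condition $W^TW = SS^T$ — and then to read off (b) from the way $L_Q$ transforms $W^TW-SS^T$ when $Q$ is orthogonal.

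\emph{``If'' direction of (a).} Assume $\Lambda$ is invertible and $C_0=\vzero$, so that $(W_c,S_c)$ is the canonical point \eqref{eq:bsp} with its $C_0$ block zero. I would take the block-diagonal matrix $A=\diag(\Lambda^{1/2},I_{k-q})$, which lies in $\GLG k$ precisely because $\Lambda$ is invertible. Then $W_cA=\bigl[\bar U\Lambda^{1/2}\ \ \vzero\bigr]$ and $A^{-1}S_c=\bigl[\begin{smallmatrix}\Lambda^{1/2}\bar V^T\\ \vzero\end{smallmatrix}\bigr]$, and using $\bar U^T\bar U=I_q$ and $\bar V^T\bar V=I_q$ (both are submatrices of the orthogonal factors of an SVD of $X$) a one-line computation gives $(W_cA)^T(W_cA)=\diag(\Lambda,\vzero)=(A^{-1}S_c)(A^{-1}S_c)^T$, i.e.\ $L_A(W_c,S_c)\in\cM_\vzero$.

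\emph{``Only if'' direction of (a).} Suppose $L_A(W_c,S_c)\in\cM_\vzero$ for some $A\in\GLG k$. Put $W=W_cA$ and $S=A^{-1}S_c$, so that $W^TW=SS^T=:P$ and $WS=W_cS_c=\bar U\Lambda\bar V^T$. The key point is that balance forces the factorization $WS$ to have the full rank of its left factor: from $W^TW=SS^T$ we get $\range(S)=\range(SS^T)=\range(W^TW)=\range(W^T)=\cN(W)^\perp$, and because $W$ is injective on $\cN(W)^\perp$ this yields $\rank(WS)=\dim W(\range(S))=\rank(W)$. (Equivalently, diagonalizing $P=Z\Gamma Z^T$ one reads off $W=\tilde U\Gamma_1^{1/2}Z_1^T$ and $S=Z_1\Gamma_1^{1/2}\tilde V^T$ with $\tilde U,\tilde V$ having orthonormal columns, $Z_1$ the first $\rank(P)$ columns of $Z$, and $\Gamma_1$ the positive diagonal block, so $WS=\tilde U\Gamma_1\tilde V^T$.) Since $A$ is invertible, $\rank(W)=\rank(W_c)=q$, and since $WS=\bar U\Lambda\bar V^T$ has rank $\rank(\Lambda)$, we conclude $\rank(\Lambda)=q$, so $\Lambda$ is invertible. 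Finally, balance also gives $\rank(S)=\rank(SS^T)=\rank(W^TW)=\rank(W)=q$, hence $\rank(S_c)=q$; but with $\Lambda$ invertible the rows of $\Lambda\bar V^T$ lie in $\spn\{v_1,\dots,v_r\}$ while the rows of $C_0^TV_0^T$ lie in the orthogonal complement $\spn\{v_{r+1},\dots,v_n\}$, so $\rank(S_c)=\rank(\Lambda\bar V^T)+\rank(C_0^TV_0^T)=q+\rank(C_0)$, which forces $C_0=\vzero$.

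\emph{Part (b) and the main obstacle.} For $Q\in\OG k$ we have $L_Q(W,S)=(WQ,Q^TS)$ and $L_{AQ}=L_Q\circ L_A$, so $(WQ)^T(WQ)-(Q^TS)(Q^TS)^T=Q^T(W^TW-SS^T)Q$; hence $L_Q$ maps $\cM_C$ to $\cM_{Q^TCQ}$, and in particular $\cM_\vzero$ to itself. Applying this with $(W,S)=L_A(W_c,S_c)\in\cM_\vzero$ gives $L_{AQ}(W_c,S_c)=L_Q(L_A(W_c,S_c))\in\cM_\vzero$. The only step carrying real content is the ``only if'' direction of (a): once one knows that $W^TW=SS^T$ forces $\rank(WS)=\rank(W)$ — so that the rank $q$ of the canonical point is recoverable from $WS=\bar U\Lambda\bar V^T$ — the conclusion that $\Lambda$ is invertible is immediate, and the remaining rank bookkeeping, the explicit $A$ in the ``if'' direction, and part (b) are all routine.
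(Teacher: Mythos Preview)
Your proof is correct. The ``if'' direction and part (b) are essentially the same as the paper's, but your ``only if'' argument takes a genuinely different route.

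The paper reduces $L_A(W_c,S_c)\in\cM_\vzero$ to the single equation $AA^T(W_c^TW_c)AA^T=S_cS_c^T$ (obtained by conjugating $A^TW_c^TW_cA=A^{-1}S_cS_c^TA^{-T}$ with $A$), then writes $R=AA^T$ in $2\times 2$ block form and reads off $R_1^2=\Lambda^2$, $R_1R_3=\vzero$, $R_3^TR_3=C_0^TC_0$; positive definiteness of $R$ forces $R_1=\Lambda\succ 0$, then $R_3=\vzero$, then $C_0=\vzero$. This is a short, purely algebraic block computation.

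Your argument instead extracts a structural consequence of balance that is interesting in its own right: $W^TW=SS^T$ implies $\range(S)=\cN(W)^\perp$, hence $\rank(WS)=\rank(W)=\rank(S)$. Since the product $WS$ is invariant along the orbit, this immediately gives $\rank(\Lambda)=\rank(\bar U\Lambda\bar V^T)=\rank(W_c)=q$, and then the orthogonal row decomposition of $S_c$ handles $C_0$. What you gain is a cleaner explanation of \emph{why} balance rules out the degenerate directions (zero diagonal entries of $\Lambda$ and the redundant $C_0$ block): both would make the visible product $WS$ lose rank relative to the individual factors, which balance forbids. What the paper's approach buys is that it never leaves the level of $W_c^TW_c$ and $S_cS_c^T$, so no appeal to the SVD structure of $S_c$'s row space is needed.
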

\begin{proof}
Let $(W_c, S_c)$ be a canonical point and $A \in \GLG k$. 
$L_A(W_c, S_c) \in \cM_\vzero$ if and only if 
$A^TW_c^T W_c A = A^{-1}S_c S_c^T A^{-T}.$ 
Since $A \in \GLG k$, this is equivalent to 
\begin{equation} \label{eq:m0constraint}
AA^T(W_c^T W_c) AA^T = S_c S_c^T.
\end{equation}
(If) Assume $\Lambda\in \GLG q$ and $C_0=\vzero$. 
For $A =\bigl[\begin{smallmatrix} \sqrt{\Lambda} & \vzero\\ \vzero & I_{k-q}\end{smallmatrix}\bigr]
\in \GLG k,$ 
$AA^TW_c^TW_cAA^T  = 
\bigl[ \begin{smallmatrix} \Lambda & \vzero\\ \vzero & I_{k-q}\end{smallmatrix} \bigr]
\bigl[ \begin{smallmatrix} I_q& \vzero\\ \vzero & \vzero\end{smallmatrix} \bigr]
\bigl[ \begin{smallmatrix} \Lambda & \vzero\\ \vzero & I_{k-q}\end{smallmatrix} \bigr]
= \bigl[ \begin{smallmatrix} \Lambda^2 & \vzero\\ \vzero & \vzero\end{smallmatrix} \bigr]
= S_cS_c^T.
$ 
Hence $L_A(W_c, S_c)  \in \cM_\vzero.$

(Only If) 
There exists $A \in \GLG k,$ with $L_A(W_c,S_c)\in \cM_\vzero.$ 
So $A$ satisfies \eqref{eq:m0constraint}. 
In general,
$W_c^T  W_c = 
\bigl[\begin{smallmatrix} I_q & \vzero\\ \vzero & \vzero \end{smallmatrix}\bigr]$
and
$S_c S_c^T 
= \bigl[\begin{smallmatrix} \Lambda^2 & \vzero \\ \vzero & C_0^TC_0 
\end{smallmatrix}\bigr].
$
Let $R=AA^T \succ \vzero$ and write 
$
R = \bigl[\begin{smallmatrix}
R_1 & R_3 \\
R_3^T & R_2
\end{smallmatrix}\bigr]. 
$
By \eqref{eq:m0constraint}, $R$ satisfies 
$R \bigl[\begin{smallmatrix} I_q & \vzero\\ \vzero & \vzero \end{smallmatrix}\bigr] 
R
=
\bigl[\begin{smallmatrix}
R_1^2 & R_1 R_3 \\
R_3^T R_1 & R_3^T R_3
\end{smallmatrix}\bigr] 
\!=\! 
\bigl[\begin{smallmatrix} \Lambda^2 & \vzero \\ 
\vzero & C_0^T C_0 \end{smallmatrix}\bigr].$
It follows that $R_1^2 = \Lambda^2$, 
$R_1R_3 = \vzero $, and 
$R_3^T R_3 = C_0^TC_0$.
Since $ \Lambda^2$ is diagonal and non-negative, 
the first requirement gives $R_1 = \Lambda$.
Since $R\succ \vzero$, we must have $R_1\succ \vzero$, and hence 
$\Lambda\in \GLG q$. 
The second requirement then gives $R_3 = \vzero$, and
the third implies $C_0^T  C_0=\vzero.$ This gives $\|C_0\|_F^2=0$ and hence $C_0=\vzero$.

(b) If $A\in \GLG k$ satisfies \eqref{eq:m0constraint} and $Q \in \OG k,$
then $(AQ)(AQ)^T = AA^T$. So $AQ$ satisfies \eqref{eq:m0constraint}.
\end{proof}

For the rest of this section we only consider canonical points with  $C_0=\vzero$ and $\Lambda\in \GLG q$. 
The second condition requires $q\leq \min\{k,r\}$.
This allows for factorization with $k \leq r,$ and with $k > r.$
Under these assumptions we can apply $L_{\Lambda^{\nhalf}}$ 
to map the canonical point in \eqref{eq:bsp} into $\cM_\vzero$. 
This yields
\begin{equation}\label{eq:bc_ccp}
(W_0,S_0) = \left (
\begin{bmatrix} \bar U\Lambda^{\nhalf}  & \vzero_{m\by(k-q)}\end{bmatrix} , 
\begin{bmatrix} \Lambda^{\nhalf} \bar V^T \\ \vzero_{(k-q)\by n} \end{bmatrix} \right ).
\end{equation} 

\begin{corollary}\label{cor:canpM0}
The set of critical points in $\cM_\vzero$ is 
$\{\OGorb(W_0,S_0)\colon  (W_0,S_0) \textrm{ has the form \eqref{eq:bc_ccp} }\}$.
\end{corollary}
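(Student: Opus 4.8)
The plan is to prove the two inclusions of the asserted set equality separately, leaning on Theorems~\ref{thm:bsp}, \ref{thm:orbits} and~\ref{thm:M0intersection}, and finishing with one piece of elementary linear algebra. As a fixed setup I would choose a canonical point $(W_c,S_c)$ of the form \eqref{eq:bsp} with $C_0=\vzero$ and $\Lambda\in\GLG q$, set $A_0=\bigl[\begin{smallmatrix}\Lambda^{\nhalf}&\vzero\\\vzero&I_{k-q}\end{smallmatrix}\bigr]$ (the matrix used in the ``If'' part of the proof of Theorem~\ref{thm:M0intersection}(a)), and note that $L_{A_0}(W_c,S_c)$ is precisely the point $(W_0,S_0)$ of \eqref{eq:bc_ccp}. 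For the easy inclusion: for every $Q\in\OG k$ we have $L_Q(W_0,S_0)=L_{A_0Q}(W_c,S_c)$, which is a critical point by Theorem~\ref{thm:bsp}(a) together with Theorem~\ref{thm:orbits}(a), and which lies in $\cM_\vzero$ because $A_0$ satisfies \eqref{eq:m0constraint} and hence so does $A_0Q$ (Theorem~\ref{thm:M0intersection}(b)). Thus every $\OGorb(W_0,S_0)$ with $(W_0,S_0)$ of the form \eqref{eq:bc_ccp} consists of critical points in $\cM_\vzero$.

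For the reverse inclusion I would take an arbitrary critical point $(W,S)\in\cM_\vzero$. If $W=\vzero$, then $(W,S)=(\vzero,C_0^TV_0^T)$ as in \eqref{eq:cp@0}, and the balance identity $W^TW-SS^T=\vzero$ forces $SS^T=\vzero$, so $(W,S)=(\vzero,\vzero)$, the degenerate $q=0$ instance of \eqref{eq:bc_ccp}. Otherwise $\rank(W)=q\ge 1$, and Theorem~\ref{thm:bsp}(b) produces a canonical point and $A\in\GLG k$ with $(W,S)=L_A(W_c,S_c)$; since $L_A(W_c,S_c)\in\cM_\vzero$, Theorem~\ref{thm:M0intersection}(a) forces $C_0=\vzero$ and $\Lambda\in\GLG q$, so with $(W_0,S_0)$ as above I get $(W,S)=L_B(W_0,S_0)$ with $B=A_0^{-1}A\in\GLG k$. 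The reverse inclusion then reduces to the single assertion: there is $Q\in\OG k$ with $L_B(W_0,S_0)=L_Q(W_0,S_0)$.

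This reduction is the step I expect to be the main obstacle, and it is the only place the balance constraint does real work. The key fact is: if $(W_0,S_0)$ has the form \eqref{eq:bc_ccp} and $L_B(W_0,S_0)\in\cM_\vzero$, then $L_B(W_0,S_0)=L_Q(W_0,S_0)$ for some $Q\in\OG k$. I would prove it as follows. Put $P=W_0^TW_0=S_0S_0^T=\bigl[\begin{smallmatrix}\Lambda&\vzero\\\vzero&\vzero\end{smallmatrix}\bigr]$; from $W=W_0B$, $S=B^{-1}S_0$ and the memberships of $(W,S)$ and $(W_0,S_0)$ in $\cM_\vzero$ one obtains $B^TPB=W^TW=SS^T=B^{-1}PB^{-T}$, hence $(BB^T)P(BB^T)=P$. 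Comparing the blocks (of sizes $q$ and $k-q$) of this identity and repeatedly using $\Lambda\succ\vzero$ shows that $BB^T=\bigl[\begin{smallmatrix}I_q&\vzero\\\vzero&*\end{smallmatrix}\bigr]$: the off-diagonal block vanishes, and the leading block $R_1\succ\vzero$ satisfies $R_1\Lambda R_1=\Lambda$, so $\Lambda^{\nhalf}R_1\Lambda^{-\nhalf}$ is orthogonal with all eigenvalues positive, hence equals $I_q$, giving $R_1=I_q$. Therefore the first $q$ rows of $B$ are orthonormal and extend to a matrix $Q\in\OG k$ agreeing with $B$ in those rows. Since the trailing $k-q$ columns of $W_0$ and the trailing $k-q$ rows of $S_0$ are zero, $W_0B$ and $Q^{-1}S_0=Q^TS_0$ depend only on the first $q$ rows of $B$ and of $Q$, so $W=W_0B=W_0Q$; and a second use of the balance identity, now in the form $SS^T=W^TW=Q^TPQ$ combined with the block relation read off from $BB^{-1}=I$, pins down the first $q$ columns of $B^{-1}$ as the transpose of the first $q$ rows of $B$ (again by the $\Lambda\succ\vzero$ cancellation), whence $S=B^{-1}S_0=Q^{-1}S_0$. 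Thus $(W,S)=L_Q(W_0,S_0)\in\OGorb(W_0,S_0)$, which closes the argument.

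In short, everything except the third paragraph is bookkeeping with the cited theorems; the substance, and the hard part, is the elementary but slightly fiddly fact that the balance constraint collapses $\GLGorb(W_0,S_0)\cap\cM_\vzero$ to the single suborbit $\OGorb(W_0,S_0)$.
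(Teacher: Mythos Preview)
Your argument is correct and matches the paper's intent. The paper's own proof is a one-liner citing \eqref{eq:bc_ccp} and Theorem~\ref{thm:M0intersection}(b), which as stated only yields the easy inclusion $\OGorb(W_0,S_0)\subseteq\cM_\vzero$; the reverse inclusion is left to the reader, with the needed block structure of $AA^T$ implicit in the proof of Theorem~\ref{thm:M0intersection}(a). Your third paragraph supplies precisely this missing piece---the verification that the balance constraint collapses $\GLGorb(W_0,S_0)\cap\cM_\vzero$ to $\OGorb(W_0,S_0)$---so your proof is not a different approach but a rigorous completion of the paper's sketch.

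One minor simplification: once you have $BB^T=\bigl[\begin{smallmatrix}I_q&0\\0&*\end{smallmatrix}\bigr]$, write $B=\bigl[\begin{smallmatrix}B_1\\B_2\end{smallmatrix}\bigr]$ in row blocks; then $B_1B_1^T=I_q$ and $B_2B_1^T=0$ give $BB_1^T=\bigl[\begin{smallmatrix}I_q\\0\end{smallmatrix}\bigr]$, so $B_1^T=B^{-1}\bigl[\begin{smallmatrix}I_q\\0\end{smallmatrix}\bigr]$ is exactly the first $q$ columns of $B^{-1}$. This pins down $B^{-1}S_0=B_1^T\Lambda^{\nhalf}\bar V^T=Q^TS_0$ without a second appeal to the balance identity.
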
 
\begin{proof}
This follows from \eqref{eq:bc_ccp} and part (b) of Theorem \eqref{thm:M0intersection}.
\end{proof}

By Corollary \ref{cor:canpM0} and Theorem \ref{thm:bsp}, to determine the landscape around a critical point on $\cM_\vzero$ we need only examine the landscape around 
the point $(W_0,S_0)$ in \eqref{eq:bc_ccp}. Each such $(W_0,S_0)$ lies in the orbit of a companion canonical point $(W_c,S_c)$ sharing the same $\bar U$ and $\Lambda.$

\begin{theorem} \label{thm:acp}
Consider the critical point $(W_0,S_0)\in \cM_\vzero$ given by \eqref{eq:bc_ccp}.
If $(W_0,S_0)$ is not maximal, let $p\in \Int 1 q$ denote the least 
integer with $\lambda_p<\sigma_p.$

(a) If $q=\min\{k,r\}$ and $(W_0,S_0)$ is \nomiss, then
it is a global minimum. 

(b) If $q=\min\{k,r\}$ and $(W_0,S_0)$ is \missatp, then 
$\lambda_{\min}(\nabla^2 J(W_0, S_0) )  =  - ( \sigma_p - \lambda_k ).$

(c) If $q<\min\{k,r\}$, then 
$
\lambda_{\min}(\nabla^2 J(W_0, S_0) ) 
= 
\begin{cases}
 -\sigma_{q+1}& \textrm{if } (W_0,S_0) \textrm{ is \nomiss};\\ 
 - \sigma_p , &\textrm{if } (W_0,S_0) \textrm{ \missatp.} 
\end{cases}
$
\end{theorem}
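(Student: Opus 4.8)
The plan is to reduce each case to the already-available eigenvalue formulas for canonical points (Theorem \ref{thm:bsp_class}), exploiting the fact that $(W_0,S_0)$ in \eqref{eq:bc_ccp} is obtained from a companion canonical point $(W_c,S_c)$ with $C_0=\vzero$ by the transformation $L_{A}$ with $A=\bigl[\begin{smallmatrix}\sqrt{\Lambda}&\vzero\\ \vzero&I_{k-q}\end{smallmatrix}\bigr]$. First I would observe that part (a) is immediate: with $C_0=\vzero$ the companion canonical point is maximal with $q=\min\{k,r\}$, so it is a global minimum by Theorem \ref{thm:bsp_class}(a) (here $\Lambda$ invertible forces $q\le r$, and $q=\min\{k,r\}$ covers both $k\le r$ and $k>r$), and membership of a global minimum is an orbit invariant by Theorem \ref{thm:orbits}(a). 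So the substance is in parts (b) and (c), where I must show that moving from $(W_c,S_c)$ to $(W_0,S_0)$ along the orbit replaces the rather opaque expressions in \eqref{eq:GCSS_T2} and \eqref{eq:GCSS_T1} by the clean values $-(\sigma_p-\lambda_k)$, $-\sigma_{q+1}$, and $-\sigma_p$.

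For part (b), the companion canonical point has $\rank(W_c)=q=k<m$ (note $\Lambda$ invertible and $q=\min\{k,r\}$ with the relevant regime $k\le r$), so Theorem \ref{thm:bsp_class}(b) applies; but that formula is stated for the specific canonical representative with $W_c$ having orthonormal columns, whereas $(W_0,S_0)$ has columns of norm $\sqrt{\lambda_j}$. The right tool is Corollary \ref{cor:T2_lambda_min} referenced in the proof of Theorem \ref{thm:bsp_class}(b) — evaluated not at $a=1$ but at the imbalance parameter corresponding to $A=\sqrt{\Lambda}$ along the curve $\{(W_c a, a^{-1}S_c)\}$ that Theorem \ref{thm:nounub} already uses. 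I would substitute the value of $a$ that lands on $\cM_\vzero$ into that expression and check that the denominator collapses: one expects the $\frac{\lambda_k^2+1}{2}+\sqrt{\sigma_p^2+(\frac{\lambda_k^2-1}{2})^2}$-type denominator to become $\sigma_p+\lambda_k$ after rescaling, leaving $-(\sigma_p^2-\lambda_k^2)/(\sigma_p+\lambda_k)=-(\sigma_p-\lambda_k)$. Equivalently, one can read off the two roots $\frac{\omega}{2}\pm\sqrt{\sigma_p^2+(\omega/2)^2}$ from \eqref{eq:rho+-}-style reasoning with the balanced value $\omega=$ (the relevant diagonal entry of $W_0^TW_0=S_0S_0^T=\Lambda$), i.e. $\omega=\lambda_k^2$... actually the balanced scaling makes the analogue of $\omega_j$ equal to $\lambda_k^2$ and the off-diagonal coupling $\sigma_p\lambda_k$, and the quadratic $\rho^2-(\lambda_k^2-\sigma_p^2)\rho-\ldots$ factors to give roots $\pm(\sigma_p-\lambda_k)$ and $\pm(\sigma_p+\lambda_k)$; the negative one of smallest magnitude is $-(\sigma_p-\lambda_k)$.

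For part (c) the companion point has $\rank(W_c)=q<\min\{k,m\}$, so Theorem \ref{thm:bsp_class}(c) and equations \eqref{eq:GCSS_T1} apply, but now with $C_0=\vzero$: since $S_c S_c^T=\bigl[\begin{smallmatrix}\Lambda^2&\vzero\\\vzero&C_0^TC_0\end{smallmatrix}\bigr]=\bigl[\begin{smallmatrix}\Lambda^2&\vzero\\\vzero&\vzero\end{smallmatrix}\bigr]$, all the $\omega_j$ of the $C_0$-block vanish, so in particular $\omega_{k-q}=0$. Setting $\omega_{k-q}=0$ in the first line of \eqref{eq:GCSS_T1} gives exactly $-\sigma_{q+1}$ (maximal case), and in the second line the first branch reduces to $-\sigma_p$ while the second branch $\shalf(\lambda_q^2+1-\sqrt{(\lambda_q^2-1)^2+4\sigma_p^2})$ must be shown to be $\ge -\sigma_p$ so the min is $-\sigma_p$; but the companion canonical point's formula has $\Lambda$-columns, and one must again pass to the $\cM_\vzero$ representative. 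The cleanest route is to recompute the Hessian eigenvalues directly at $(W_0,S_0)$ by the same diagonalization used in Appendix \ref{sec:HE@CP}: the $W$-block contributes $W_0^TW_0=\Lambda$ and the coupling to the error $E=\bar U\Lambda\bar V^T-X$ in directions $(u_i,v_i)$ for $i$ outside the selected set contributes singular value $\sigma_i$, so for each such $i$ one gets a $2\times2$ block whose eigenvalues solve $\rho^2-\lambda_j\rho-\sigma_i^2\lambda_j/\lambda_j=\ldots$ — again the balanced case where the quadratic factors and the negative root is simply $-\sigma_i$ (for the zero-$\Lambda$-weight directions coming from the $k-q$ empty columns) or $\lambda_j-\sigma_i<0$ (mixing a used direction $j$ with $\sigma_p$). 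The minimum over all these is $-\sigma_{q+1}$ if maximal and $-\sigma_p$ otherwise, after checking $\sigma_p-\lambda_p\le\sigma_{p}$ and that $p$ is indeed where the largest gap $\sigma_p$ (not merely the largest gap $\sigma_p-\lambda_p$) is achieved among strict-saddle directions.

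\textbf{Main obstacle.} The routine-looking but genuinely fiddly step is the bookkeeping in part (c): confirming that after setting $\omega_{k-q}=0$ and rescaling to $\cM_\vzero$, the competing second branch of \eqref{eq:GCSS_T1} never beats $-\sigma_p$, and more importantly that the least eigenvalue over all the $2\times2$ Hessian blocks is governed by the index $p$ (the first place $\lambda_p<\sigma_p$) rather than by some later index — this requires the monotonicity observation from the proof of Theorem \ref{thm:ev0c0}(d) applied to the function $\omega\mapsto \frac{\omega}{2}-\sqrt{\sigma^2+(\omega/2)^2}$ together with the ordering $\sigma_1\ge\cdots\ge\sigma_r$, and care that in the non-maximal, non-\nomiss\ case the "$-\sigma_{q+1}$ from the empty columns" and "$-\sigma_p$ from the mismatched used column" are compared correctly (one has $p\le q<q+1$, so $\sigma_p\ge\sigma_{q+1}$, and hence $-\sigma_p\le-\sigma_{q+1}$, making $-\sigma_p$ the minimum — consistent with the stated formula).
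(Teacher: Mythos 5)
Your plan follows essentially the same route as the paper's proof: part (a) by orbit invariance from the companion canonical point, and parts (b)--(c) by redoing the $2\times 2$ block eigen-computation of Appendix \ref{sec:HE@CP} directly at the balanced point, where $W_0^TW_0=S_0S_0^T=\Lambda$ makes the quadratics factor and yields the clean eigenvalues $\lambda_j-\sigma_i$ (for omitted $u_i$ paired with a used column $j\le q$) and $-\sigma_i$ (for omitted $u_i$ paired with a zero column $j>q$). Two points in your write-up need repair. First, your primary route for (b) --- substituting ``the value of $a$ that lands on $\cM_\vzero$'' into Corollary \ref{cor:T2_lambda_min} --- is not available: that corollary lives on the scalar curve $(W_ca,a^{-1}S_c)$, and no scalar $a$ places that curve in $\cM_\vzero$ unless all the $\lambda_j$ coincide (one would need $a^2I_k=a^{-2}\Lambda^2$); the rescaling into $\cM_\vzero$ is by the diagonal matrix $\Lambda^{\nhalf}$, so you must fall back on your ``equivalently'' block computation, which is exactly what the paper does. (Also, the minimum eigenvalue is the negative root of \emph{largest} magnitude, $\lambda_k-\sigma_p$, not ``smallest magnitude''; and in part (c) the value $-\sigma_p$ comes from the zero-column block at index $(p,q+1)$, not from a ``mismatched used column,'' which only contributes $-(\sigma_p-\lambda_j)>-\sigma_p$.) Second, after exhibiting negative eigenvalues at $(W_0,S_0)$ you must certify there are no others; the paper does this cheaply via Theorem \ref{thm:orbits}(b) --- inertia is constant on the orbit and the full spectrum at the companion canonical point is already known --- whereas your plan implicitly requires carrying the complete diagonalization of all $k(m+n)$ directions through at the balanced point. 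Either works, but that step should be made explicit rather than left as bookkeeping.
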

\begin{proof}
Let $(W_0,S_0)$ be in the orbit of the canonical point $(W_c,S_c).$

(a) Since $(W_0,S_0)$ is maximal so is $(W_c,S_c).$
Then by Theorem \eqref{thm:bsp_class}, $(W_c,S_c)$ is a global minimum.
Hence $(W_0,S_0)$ is a global minimum. 

(b) $q=\min\{k,r\}$ and $(W_0, S_0)$ \missatp\ implies $q=k < r$. 
By Lemma \ref{lem:g2}, for each $i\in \Int 1r$ and $j\in \Int 1k,$ with 
$u_i^T\bar U=\vzero$ and $\lambda_j<\sigma_i,$ 
$\nabla^2J(W_c,S_c)$ has a negative eigenvalue. 
For each such pair $i,j$ we show that $\nabla^2 J(W_0,S_0)$ 
also has a negative eigenvalue.
Let  $G_{i,j} =  u_i \ve_j^T$ and $H_{j,i}= \ve_j v_i^T.$
Since $\bar U^T u_i =\vzero$ and $\bar V^T v_i =\vzero,$ we have
$H_{j,i} S_0 = \vzero$ and $W_0^TG_{i,j}=\vzero.$ 
Using \eqref{eq:D2J} and $W_0^TW_0 = S_0S_0^T =\Lambda$ we have 
$\nabla^2 J(W_0,S_0)[(G_{i,j},H_{j,i})] 
 = (u_i\ve_j^T\Lambda 
+ (W_0S_0 -X) v_i\ve_j^T,
\Lambda \ve_j v_i^T  
+ \ve_j u_i (W_0S_0-X))
= (\lambda_j -\sigma_i) (G_{i,j}, H_{j,i}).
$
Thus $-(\sigma_i - \lambda_j)$ is a negative eigenvalue of $\nabla^2 J(W_0,S_0)$.
By Theorem \ref{thm:orbits} part (b), these are the only negative eigenvalues 
of $\nabla^2 J(W_0,S_0).$ 
Hence $\lambda_{\min}(\nabla^2 J(W_0, S_0) )  =  - ( \sigma_p - \lambda_k)$.

(c) Let $q<\min\{k,r\}$ and $(W_c,S_c)$ be \nomiss. 
Lemma \ref{lem:qkC0} shows that 
for $i\in \Int 1r,$ $j\in \Int {q+1} k$ with $u_i^T \bar U=\vzero,$ 
$(G_{i,j}, H_{j,i})$ is an eigenvector of $\nabla^2 J(W_c,S_c)$ 
with eigenvalue $-\sigma_i.$  Moreover, these are the only negative 
eigenvalues.
The same proof for the same pairs $i,j,$ shows that $(G_{i,j}, H_{j,i})$ 
is also an eigenvector of $\nabla^2J(W_0,S_0)$ with eigenvalue 
$-\sigma_i.$  
By Theorem \ref{thm:orbits} part (b) these are the only negative 
eigenvalues of $\nabla^2 J(W_0,S_0)$. Hence in this case, 
$\lambda_{\min}(\nabla^2 J(W_0,S_0) ) = - \sigma_{q+1}$.

Now suppose $(W_c,S_c)$ is \missatp.
Then $\nabla^2J(W_c,S_c)$ has two groups of negative eigenvalues
and these are the only negative eigenvalues. 
Group 1:
By Lemma \ref{lem:g2} and Lemma \ref{lemma:relate-evals}, for each 
$i\in \Int 1 r$ and $j\in \Int 1q,$ with $u_i^T\bar U = \vzero$ and $\sigma_i <\lambda_j$, 
$\nabla^2 J(W_c,S_c)$ has a negative eigenvalue.
Group 2:
By Lemma \ref{lem:qkC0}, for each 
$i\in \Int 1 r$ and $j\in \Int {q+1} k,$ with $u_i^T\bar U = \vzero,$ 
$\nabla^2 J(W_c,S_c)$ has a negative eigenvalue. 
We show that for each pair of indices in each group,
$\nabla^2J(W_0,S_0)$ has a negative eigenvalue.
First, by adapting the result in part (b) to the current situation, we see that $-(\sigma_i - \lambda_j)$ is a negative eigenvalue of 
$\nabla^2 J(W_0,S_0)$ for all pairs of indices $i,j$ in group 1. 
Second, the first result in part (c) shows that $-\sigma_i$ is a negative 
eigenvalue of $\nabla^2 J(W_0,S_0)$ for all pairs indices $i,j$ in group 2.   
By Theorem \ref{thm:orbits} part (b), these are all of negative eigenvalues of $\nabla^2 J(W_0,S_0).$ The least eigenvalue is attained in the second group by selecting $i=p$ and $j=q+1$.
Hence in this case, 
$\lambda_{\min}(\nabla^2 J(W_0,S_0) ) = - \sigma_{p}$.
\end{proof}

Theorem \ref{thm:acp} provides an expression for 
$\lambda_{\min}(\nabla^2J (W_0,S_0))$ at each strict saddle 
$(W_0,S_0)\in \cM_\vzero$. If $q=k<r$ and $(W_0,S_0)$ is 
\nomiss, this value is a difference of two singular values. 
For example, if the first $k+1$ singular values are distinct, 
then the largest possible value is $\Delta_k =  - \min_{1\leq j\leq k} \sigma_j - \sigma_{j+1}.$
If the singular values are not distinct, the corresponding value is 
based on differences of the consecutive {\em distinct} values assumed by the $\sigma_j$.
When $q< r <k,$ part (b) of the Theorem indicates that largest bound is 
$-\sigma_r$ independent of $k.$

\begin{corollary}\label{cor:ubnd}
There exists $\gamma<0$ such that 
$\lambda_{\min}(\nabla^2J(W_0,S_0)) <\gamma$  for all strict saddles in 
$(W_0,S_0) \in \cM_\vzero.$
\end{corollary}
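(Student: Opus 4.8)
The plan is to reduce the statement to a finiteness argument over the canonical representatives of critical points in $\cM_\vzero$. By Corollary~\ref{cor:canpM0}, every critical point of $J$ in $\cM_\vzero$ lies on a suborbit $\OGorb(W_0,S_0)$ with $(W_0,S_0)$ of the form \eqref{eq:bc_ccp}, and by Theorem~\ref{thm:orbits}(c) the Hessian $\nabla^2 J$ has the same eigenvalues at every point of that suborbit. Consequently it suffices to exhibit a single $\gamma<0$ with $\lambda_{\min}(\nabla^2 J(W_0,S_0))<\gamma$ for every strict saddle $(W_0,S_0)$ of the form \eqref{eq:bc_ccp}.

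Next I would invoke Theorem~\ref{thm:acp} to read off the possible values of $\lambda_{\min}(\nabla^2 J(W_0,S_0))$ at such a point. Excluding the global-minimum case (a), the value is one of $-(\sigma_p-\lambda_k)$ in case (b), or $-\sigma_{q+1}$ (maximal) or $-\sigma_p$ (not maximal) in case (c). The crucial bookkeeping is that every singular value appearing here is nonzero: in case (b) we have $q=\min\{k,r\}=k\le r$, so $p\le k\le r$ and $\sigma_p>0$, while $C_0=\vzero$ forces $\Lambda\in\GLG q$, so each $\lambda_j$ is a \emph{positive} singular value of $X$, hence $\lambda_k\in\{\sigma_1,\dots,\sigma_r\}$ and $\lambda_k<\sigma_p$ makes $\sigma_p-\lambda_k$ a strictly positive gap between two of the nonzero singular values of $X$; in case (c) we have $q<\min\{k,r\}\le r$, so $q+1\le r$ and $p\le q<r$, whence $\sigma_{q+1},\sigma_p\ge\sigma_r>0$.

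Finally, let $\delta=\min\{\sigma_i-\sigma_j : 1\le i<j\le r,\ \sigma_i>\sigma_j\}$ (with the convention $\delta=+\infty$ if no two nonzero singular values are distinct) and put $\gamma=-\min\{\sigma_r,\delta\}<0$. For a case-(b) strict saddle, $\lambda_{\min}(\nabla^2 J(W_0,S_0))=-(\sigma_p-\lambda_k)\le-\delta\le\gamma$; for a case-(c) strict saddle, $\lambda_{\min}(\nabla^2 J(W_0,S_0))$ equals $-\sigma_{q+1}$ or $-\sigma_p$, each of which is $\le-\sigma_r\le\gamma$. Since these are all the strict-saddle possibilities, $\gamma$ works, and it depends only on the smallest nonzero singular value of $X$ and the minimal gap between distinct nonzero singular values, matching the description in the abstract.

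There is essentially no deep obstacle once Theorem~\ref{thm:acp} is in hand; the one point requiring care is the index bookkeeping in the second paragraph — confirming that the indices produced by Theorem~\ref{thm:acp} always refer to nonzero singular values, so that the finite set of candidate values contains no zeros — together with the degenerate case in which all nonzero singular values coincide, where the "not maximal" alternatives are vacuous (every selection is then maximal) and $\gamma=-\sigma_r$ already suffices.
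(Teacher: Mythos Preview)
Your proposal is correct and follows the same approach as the paper: the paper's proof is the single sentence ``The minimum eigenvalues in Theorem~\ref{thm:acp} can only take a finite set of negative values,'' and your argument simply makes this explicit by verifying that each value produced by Theorem~\ref{thm:acp} is drawn from the finite list $\{\sigma_1,\dots,\sigma_r\}\cup\{\sigma_i-\sigma_j:\sigma_i>\sigma_j>0\}$. The only cosmetic point is that your explicit $\gamma=-\min\{\sigma_r,\delta\}$ yields $\lambda_{\min}\le\gamma$ rather than the strict inequality in the statement; replacing $\gamma$ by, say, $\gamma/2$ fixes this trivially.
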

\begin{proof}
The minimum eigenvalues in Theorem \ref{thm:acp} 
can only take a finite set of negative values.
\end{proof}

For $q<k \leq r$ and a generic distribution of singular values, 
we expect $\Delta_k$ to be the larger bound when $k$ is small
and $-\sigma_k$ to be the larger bound for larger $k$.

\section{Conclusion}\label{sec:conclusion}
Our main contribution is to provide a more complete 
understanding of the landscape of simple matrix factorization. 
Our approach considers the orbits of critical points under the general linear group, and represents each orbit by a canonical point.
Prior work tells us that a critical points of \eqref{eq:J} are either
global minima or a strict saddles \cite{baldi1989neural}. 
We go beyond that result to determine the eigenvalues and eigenvectors of the Hessian at each canonical point. 
This determines the number of negative eigenvalues and 
leads to an expression for minimum eigenvalue of the Hessian 
at each canonical point. The latter expression allows us to show that
the minimal eigenvalue was not uniformly bounded below zero 
over all strict saddles. There are two reasons for this. 
First, the matrix $C_0$ that appears, for example, in the family of critical points $(\vzero,C_0^T V_0^T)$ can push the minimum 
eigenvalue of the Hessian towards $0.$
Second, moving a strict saddle along its orbit under $\GLG k$
can also push the minimum eigenvalue of the Hessian towards $0$. However, we show that constraining attention to a particular manifold 
$\cM_\vzero,$ ensures the least eigenvalue of the Hessian at strict saddles is uniformly bounded below zero. We prove this by characterizing the critical points on $\cM_\vzero$ and using this to obtain an explicit expression for $\lambda_{\min}(\nabla^2 J(W,S)$ 
for each strict saddle $(W,S)$ in $\cM_\vzero$. 

The manifold $\cM_\vzero$ is special in that points on the manifold satisfy the so-called balance condition $\|W\|_F^2= \|S\|_F^2.$
Moreover, it is known that $\cM_\vzero$ is invariant under gradient flow \cite{AroraICML2018a, DuNIPS2018}. However, despite its special characteristics, continuity ensures that our results above degrade gracefully as one deviates from this particular manifold.

Finally, our development has used the natural setting of the problem, made no assumptions of symmetry or artificially created symmetry, and has avoided vectorization of the relevant differentials. We believe that this yields greater clarity and insight. For example, it permits us determine eigenvectors with a simple interpretable structure at every canonical critical point. We also posit that this approach is more amenable to generalization to related problems.

\appendix

\section{Auxiliary Lemmas and Proofs}\label{sec:auxlem}

\begin{proof}[{\bf Proof of Lemma \ref{lem:LAderiv}}]
(a) $\nabla J(L_A(W,S)) = (ES^TA^{-T}, A^TW^TE) = L_{A^{-T}} \nabla J(W,S).$ \\
(b) $DJ(L_A(W,S))[(G,H)] = \ip{L_{A^{-T}}(\nabla J(W,S))}{(G,H)} 
= \ip{(\nabla J(W,S))}{L_{A^{-1}}(G,H)}.$\\
(c) $\nabla^2 J(L_A(W,S)) [(G,H)] 
= ((GA^{-1}) SS^T A^{-T} + W(AH)S^T A^{-T} + EH^T, A^TW^TW(AH) +A^TW^T(GA^{-1}) S +G^TE)
= L_{A^{-T}} \left ( \nabla^2 J(W,S)[L_{A^{-1}}(G,H)]\right ).$\\
(d) By definition, $D^2J(L_A(W,S))[(G,H)] = \ip{\nabla^2(L_A(W,S))[(G,H)]}{(G,H)}.$
Using part (c), the RHS of the previous equation can be written as 
$\ip{L_{A^{-T}}(\nabla^2(W,S)[L_{A^{-1}}(G,H)] )}{(G,H)}
= \ip{\nabla^2(W,S)[L_{A^{-1}}(G,H)]}{L_{A^{-1}}(G,H)}
=D^2 J(W,S) [L_{A^{-1}} (G,H)].$
\end{proof}

\begin{proof}[{\bf Proof of Theorem \ref{thm:orbits}}]
(a) (i) If $\nabla J(W,S)=\vzero$, then by part (a) of Lemma \ref{lem:LAderiv} and the linearity of $L_{A^{-T}}$, $\nabla J(L_A(W,S))=\vzero$.  
(ii) Let $\nabla^2J(W,S)[(G,H)] = \lambda (G,H)$ with $\lambda<0.$
Then $D^2(W,S)[(G,H)]=\ip{\nabla^2(W,S)[(G,H)]}{(G,H)} < 0$.
By part (d) of Lemma \ref{lem:LAderiv}, 
$D^2J(L_A(W,S)) [L_A(G,H)] = D^2(W,S)[(G,H)] <0.$
Thus $\nabla^2 J(L_A(W,S))$ also has a negative eigenvalue.\\
(b) Fix an orthonormal basis $\{(G_i, H_i)\}_{i=1}^{k(m+n)}$ for $\WS.$
With respect to this basis, each $(G,H)\in \WS$ has a unique coordinate vector 
$g=\phi(G,H)\in \R^{k(m+n)},$
and each linear map $L_A\colon \WS\to \WS,$ with $A\in \GLG k,$ 
has a unique matrix representation $M(L_A)\in \R^{k(m+n)}.$ 
We then have $g_A = M(L_A) g$ where $g_A = \phi(L_A(G,H)).$  
Inner products are preserved using coordinates: 
$\textstyle \ip{(G,H)}{(G',H')} 
= \ip{g}{g'}.$
It is also easy to verify that $\ip{L_{A^T}(G,H)}{(G',H')} 
= \ip{(G,H)}{L_A(G',H')}.$ We then have
\begin{align*}
\ip{L_{A^T}(G,H)}{(G',H')} &= \ip{M(L_{A^T})g}{g'} = g^T M(L_{A^T})^T g', 
\quad \textrm{and} \\
\ip{(G,H)}{L_A(G',H')} &= \ip{g}{M(L_A)g'} = g^T M(L_A) g'.
\end{align*}
These expressions must be equal for all $g, g'$. Thus $M(L_{A^T})^T = M(L_A).$
Since $\nabla^2 J(W,S)$ is a linear map on $\WS$, it has a matrix $P$ in the given basis. Similarly, $\nabla^2 J(L_A(W,S))$ has a matrix $Q$.
Since $P$ and $Q$ represent Hessian maps,  both are symmetric matrices. 
By part (c) of Lemma \ref{lem:LAderiv},
$\nabla^2 J(L_A(W,S))[(G,H)] = L_{A^{-T}}( \nabla^2 J(W,S)[L_{A^{-1}}(G,H)] ).$
Letting $g=\phi(G,H),$ and writing this equation using coordinates yields
$Q g = M (L_{A^{-1}})^T P M(A^{-1}) g.$ 
Since this must hold for all $g$, we conclude that 
$Q = M (L_{A^{-1}})^T P M(L_{A^{-1}}).$ 
Thus the matrices $P$ and $Q$ are congruent. 
The result then follows by Sylvester's theorem 
of inertia \cite[Theorem 4.5.8]{Horn2013}.

(c) Let $(G,H)$ be an eigenvector of $\nabla^2 J(W,S)$ 
with eigenvalue $\lambda$. By Lemma \ref{lem:LAderiv} part (c), 
\begin{equation*}
\nabla^2J(L_A(W, S)) [L_A(G,H)] = \lambda L_{A^{-T}}( (G,H) ) 
	= \lambda (G A^{-T}, A^T H).
\end{equation*}
If $A\in \OG k$ then $A^{-1}=A^T$. 
Hence $\nabla^2J(W,S) [L_A(G,H)] = \lambda L_A(G ,H)$. 
Thus $\lambda$ is also an eigenvalue of $\nabla^2J(WA,A^{-1}S)$. 
A symmetric argument proves the converse result.

(d) Let $\lambda = \lambda_{\min}(\nabla^2 J(W,S))$ have eigenvector $(G,H)$. 
By Lemma \ref{lem:LAderiv} part (c), 
\begin{align*}
D^2 J(L_A(W,S))[L_A(G,H)] 
& = \lambda \ip{(G A^{-T}, A^T H)}{(GA,A^{-1}H)}
 =\lambda \trace( G^T G + HH^T ).
\end{align*}
Dividing the above equation by the squared norm of $(GA,A^{-1}H)$ yields 
\begin{align*}
\lambda_{\min} (\nabla^2 J(L_A (W, S) ) )
\leq  \lambda
\frac{\trace(G^T G + HH^T)}{\trace(A^TG^TGA + A^{-1} HH^TA^{-T})}
\leq 
\lambda /\vertt{L_A}^2 ,
\end{align*}
where $\vertt{L_A}$ denotes the induced norm of $L_A.$
The result then follows by Lemma \ref{lem:INLA}.
\end{proof}

\begin{lemma}\label{lem:INLA}
$\vertt{L_A} = \max\{\lambda_{\max}^{\nhalf} (AA^T) , \lambda_{\min}^{-\nhalf} (AA^T) \}$. 
\end{lemma}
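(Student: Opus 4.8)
The plan is to compute the operator norm of $L_A$ directly from its definition $L_A(G,H) = (GA, A^{-1}H)$ with respect to the inner product on $\WS$ for which $\|(G,H)\|_F^2 = \|G\|_F^2 + \|H\|_F^2$. First I would observe that the two blocks decouple: $\|L_A(G,H)\|_F^2 = \|GA\|_F^2 + \|A^{-1}H\|_F^2$, so it suffices to bound $\|GA\|_F$ in terms of $\|G\|_F$ and $\|A^{-1}H\|_F$ in terms of $\|H\|_F$ separately, and then check that the extreme cases for the two blocks can be attained simultaneously (they can, since $G$ and $H$ are independent arguments).

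Next I would recall the standard fact that for any $G$, $\|GA\|_F^2 = \trace(A^T G^T G A) = \trace(G^T G\, AA^T) \le \lambda_{\max}(AA^T)\|G\|_F^2$, with equality when the rows of $G$ lie in the top eigenspace of $AA^T$; hence $\sup_{G\neq 0}\|GA\|_F/\|G\|_F = \lambda_{\max}^{\nhalf}(AA^T)$. Applying the same identity with $A$ replaced by $A^{-1}$ gives $\sup_{H\neq 0}\|A^{-1}H\|_F/\|H\|_F = \lambda_{\max}^{\nhalf}(A^{-1}A^{-T}) = \lambda_{\max}^{\nhalf}((AA^T)^{-1}) = \lambda_{\min}^{-\nhalf}(AA^T)$, using that the eigenvalues of $(AA^T)^{-1}$ are the reciprocals of those of $AA^T$.

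Finally I would combine the two: $\|L_A(G,H)\|_F^2 \le \lambda_{\max}(AA^T)\|G\|_F^2 + \lambda_{\min}^{-1}(AA^T)\|H\|_F^2 \le \max\{\lambda_{\max}(AA^T), \lambda_{\min}^{-1}(AA^T)\}(\|G\|_F^2 + \|H\|_F^2)$, which gives the upper bound $\vertt{L_A} \le \max\{\lambda_{\max}^{\nhalf}(AA^T), \lambda_{\min}^{-\nhalf}(AA^T)\}$. For the matching lower bound, I would pick whichever of the two maxima dominates and choose a test point supported entirely in the corresponding block: if $\lambda_{\max}(AA^T) \ge \lambda_{\min}^{-1}(AA^T)$, take $(G,0)$ with the rows of $G$ an eigenvector of $AA^T$ for $\lambda_{\max}$; otherwise take $(0,H)$ with the rows of $H$ an eigenvector of $(AA^T)^{-1}$ for its top eigenvalue. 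Either choice forces equality, completing the proof. I do not anticipate a real obstacle here; the only point requiring a little care is making sure the supremum over the two blocks is genuinely achieved by a single $(G,H)$ rather than only in a limit, which is handled by the explicit eigenvector construction.
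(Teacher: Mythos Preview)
Your proof is correct and follows essentially the same route as the paper: both split $\|L_A(G,H)\|_F^2$ into the decoupled blocks $\|GA\|_F^2$ and $\|A^{-1}H\|_F^2$, bound each by the extreme eigenvalue of $AA^T$ (resp.\ $(AA^T)^{-1}$), and attain the max via an eigenvector test point concentrated in one block. One small slip: for the $H$-block you want the \emph{columns} of $H$ (not its rows) to lie in the top eigenspace of $(AA^T)^{-1}$, since $H\in\R^{k\times n}$ and $A^{-1}$ acts on the left.
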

\begin{proof}
$\vertt{L_A} = \max_{\|(G,H)\|_F =1} \|(GA, A^{-1} H)\|_F$.
Let $u$ (resp. $v$) be a unit norm eigenvector of $AA^T$ (resp. $(AA^T)^{-1}$) 
with eigenvalue
$\lambda_{\max} \triangleq \lambda_{\max}(AA^T)$ 
(resp. $\mu_{\max} \triangleq \lambda_{\min}^{-1}(AA^T)$). 
Let $(G,H)$ satisfy $\|(G,H)\|_F =1$ and maximize 
\begin{equation}\label{eq:objLA}
\|(G,H)\|_F^2 = \trace(GAA^TG^T) + \trace(H^T (AA^T)^{-1} H).
\end{equation}
If a nonzero row of $G$ is replaced by a scaled version of $u^T$ with the same norm,
the constraint remains satisfied and the objective can increase. 
The same holds if a nonzero column of $H$ is replaced by a suitably scaled 
version of $v$. 
Hence there is an optimal $(G,H)$ of the form 
$G=\sum \alpha_i \ve_i u^T$, $H=\sum \beta_j v \ve_j^T$,
$\sum \alpha_i^2 +\sum \beta_j^2 =1$, with optimal value
$(\sum \alpha_i^2) \lambda_{\max} + (\sum \beta_j^2) \mu_{\max}$. 
This value is achieved by $(G^\star, H^\star)= (\alpha \ve_1u^T,
\beta v\ve_1^T)$ with $\alpha^2+\beta^2 =1$.  
Thus the optimal value of \eqref{eq:objLA} is
$
\min_{\alpha^2+ \beta^2=1} ~\alpha^2  \lambda_{\max}  + \beta^2 \mu_{\max}
= \max\{\lambda_{\max}, \mu_{\max} \}$. So $\vertt{L_A} =   \max\{\sqrt{\lambda_{\max}}, \sqrt{\mu_{\max} } \}$.
\end{proof}

The following lemma concerns $S_b\in \R^{(k-q)\by n}$ 
introduced in the proof of Theorem \ref{thm:bsp}. 

\begin{lemma}\label{lem:S2props}
Let the columns of $\bar U$ be a set of $q$ left singular vectors of $X,$
the columns of $\bar V$ be a matching set of right singular 
vectors of $X,$  $\Lambda$ be the diagonal matrix with the 
singular values corresponding to $\bar U$ on the diagonal, 
and $V_0 \triangleq  [v_{r+1}\ \dots\ v_n].$
Then 

(a) $\Sigma V^T V_0=\vzero$ and $\Lambda \bar V^T V_0 = \vzero.$

(b) $(\bar U \bar U^T -I )X {S}_b^T =\vzero \ \Leftrightarrow\ 
S_b^T= \bar V \bar C +V_0C_0$ 
for some $\bar C \in \R^{q\by (k-q)}$, $C_0 \in \R^{(n-r)\by(k-q) }$.
\end{lemma}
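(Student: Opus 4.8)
The plan is to prove the two parts of Lemma~\ref{lem:S2props} by direct computation with the SVD of $X$, using only orthonormality of the singular vectors.

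\medskip

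\textbf{Part (a).} First I would recall that the columns $v_{r+1},\dots,v_n$ of $V_0$ are right singular vectors of $X$ associated with zero singular values, hence they are orthogonal to every $v_i$ with $i\in\Int 1r$. Writing $X=U\Sigma V^T=\sum_{i=1}^r\sigma_i u_i v_i^T$ as in \eqref{eq:fullsvdX}, the matrix $\Sigma V^T$ has rows that are multiples of $v_i^T$ for $i\in\Int 1r$ (and zero rows otherwise), so $\Sigma V^T V_0 = \vzero$ follows immediately from $v_i^T v_j = 0$ for $i \le r < j$. Next, since $\bar V$ consists of a subset of those $v_i$ with $i\in\Int 1q\subseteq\Int 1r$ (the columns matching the chosen left singular vectors $\bar U$ for the nonzero singular values, as $\Lambda\bar V^T = \bar U^T X$ with $\Lambda$ the corresponding singular values), we get $\bar V^T V_0=\vzero$ and hence $\Lambda\bar V^T V_0=\vzero$. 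This part is routine.

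\medskip

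\textbf{Part (b).} For the equivalence, the key observation is that $\bar U\bar U^T$ is the orthogonal projector onto $\cR(\bar U)$, so $\bar U\bar U^T - I$ is (minus) the projector onto $\cR(\bar U)^\perp$; thus $(\bar U\bar U^T - I)X S_b^T = \vzero$ says exactly that every column of $X S_b^T$ lies in $\cR(\bar U)$. Since $\cR(\bar U) = \vspan\{\bar u_1,\dots,\bar u_q\}$ is spanned by $q$ of the left singular vectors $u_i$ ($i$ ranging over the index set defining $\bar U$), and $X v_i = \sigma_i u_i$, I would expand $S_b^T$ in the orthonormal basis $\{v_1,\dots,v_n\}$ of $\R^n$ as $S_b^T = \sum_{i=1}^n v_i b_i^T$ for suitable row vectors $b_i^T$, giving $X S_b^T = \sum_{i\in\Int 1r}\sigma_i u_i b_i^T$. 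The condition $(\bar U\bar U^T-I)XS_b^T=\vzero$ then forces, for each $i\in\Int 1r$ with $u_i\notin\cR(\bar U)$ (equivalently $u_i^T\bar U = \vzero$) and $\sigma_i\neq 0$, that $b_i^T = \vzero$; the components with $u_i\in\cR(\bar U)$ are unconstrained and collect into a term $\bar V\bar C$, while the components with $i\in\Int{r+1}n$ (zero singular values, so they drop out of $XS_b^T$ automatically) are unconstrained and collect into $V_0 C_0$. Conversely, if $S_b^T=\bar V\bar C+V_0 C_0$ then $X S_b^T = X\bar V\bar C + XV_0 C_0 = \bar U\Lambda\bar C + \vzero\in\cR(\bar U)$ (using $X\bar V=\bar U\Lambda$ and $XV_0=\vzero$), so the projection condition holds.

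\medskip

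I expect the only mildly delicate point to be bookkeeping: being careful that $\bar U$'s columns are left singular vectors for the \emph{selected} (nonzero, since $\Lambda$ is built from $\lambda_j^2>0$) singular values, so that $X\bar V = \bar U\Lambda$ holds with $\Lambda$ invertible, and correctly partitioning the index set $\Int 1n$ into (i) indices whose $u_i$ lie in $\cR(\bar U)$, (ii) indices $i\le r$ with $u_i\perp\cR(\bar U)$, and (iii) indices $i>r$. Once that partition is fixed, both directions are a one-line computation with the SVD. There is no substantive obstacle here; the lemma is a linear-algebra fact in service of Step~(iv) of Theorem~\ref{thm:bsp}.
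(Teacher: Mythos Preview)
Your approach is essentially the same as the paper's: both directions of (b) are handled by writing $XS_b^T$ as a linear combination of the $u_i$ via the SVD and reading off which coefficients must vanish, and (a) is a direct orthogonality check. One small inaccuracy to flag: you assume throughout that the selected singular values $\lambda_j$ are all positive (``nonzero, since $\Lambda$ is built from $\lambda_j^2>0$''), but the lemma's hypotheses do not require this, and the paper's setup explicitly allows some $\lambda_j=0$ (cf.\ the partition of $\Int 1k$ into $\cS$ and $\cT$ in \S\ref{sec:eig_frcp}). If some $\lambda_j=0$, the corresponding $\bar v_j$ is one of $v_{r+1},\dots,v_m$ and hence a column of $V_0$, so your intermediate claim $\bar V^T V_0=\vzero$ fails. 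The conclusion $\Lambda\bar V^T V_0=\vzero$ still holds, since the offending rows are killed by the zero diagonal entry of $\Lambda$; the paper's proof of (a) handles this case-split explicitly. Your argument for (b) is unaffected by this issue, since the terms with $\lambda_j=0$ can be absorbed into either $\bar V\bar C$ or $V_0C_0$. So: correct approach, same as the paper, with a trivially repairable bookkeeping slip in (a).
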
  

\begin{proof}
(a) For $i\in \Int {m+1} n,$ $\Sigma V^T v_i =0$ since $v_i$ is not a column in $V.$
For $i\in \Int {r+1} m,$ $\sigma_i=0$. 
If $v_i$ is not a column of $V,$ $\Sigma \bar V^T v_i =0.$
If $v_i$ is a column in $V,$  then $\sigma_i=0$ 
and $\Sigma V^T v_i = \Sigma \ve_i = \sigma_i \ve_i = 0.$
The proof for $\bar V$ is almost identical.

(b) (If) $(\bar U \bar U^T -I)X = \bar U \Lambda \bar V^T - U\Sigma V^T$.
Hence $(\bar U \bar U^T -I )X {S}_b^T = (\bar U \Lambda \bar V^T - U\Sigma V^T)
\bar V \bar C + (\bar U \Lambda \bar V^T - U\Sigma V^T)V_0C_0.$
Since the columns in $\bar V$ are a subset of the columns in $V,$ 
the first term is zero. The second term is zero by part (a).\\
(Only If)
$(\bar U \bar U^T -I )X {S}_b^T =\vzero$ implies  
for some $\bar B \in \R^{q\times (k-q)}$, $XS_b = \bar U \bar B$. 
So $U\Sigma V^T S_b^T = \bar U B$. 
Let $1\leq i\leq r$ with $u_i$ not a column of $\bar U$. 
Since $1\leq i\leq r$, $\sigma_i > 0$.
Multiplying both sides of the previous equation by 
$u_i^T$ yields $\sigma_i v_i^T S_b^T = u_i^T \bar U B =\vzero$.  
Hence for every $v_i$ with a nonzero singular value
that is not a column in $\bar V$, $v_i^T S_b^T = 0$. 
Thus the columns of $S_b^T$ lie in the span of the $v_i$ that are either columns of $\bar V$ or of $V_0$. 
Hence there exist $\bar C, C_0$ such that $S_b^T = \bar V \bar C + V_0 C_0$. 
\end{proof}

\section{Hessian Eigenvalues and Eigenvectors}
\label{sec:HE@CP}

Let $\{z_j\}_{j=1}^k$ denote an orthonormal basis in $\R^k,$
$G_{i,j} = u_i z_j^T$, $i \in \Int 1 m$, 
and $H_{j,i} = z_j v_i^T$, $i \in \Int 1 n$. 
Then let
$\cV= \{ (G_{i,j},  \vzero)\colon  j\in \Int 1k, i\in \Int 1 m\} \cup   
\{ (\vzero ,H_{j,i})\colon j\in \Int 1 k, i\in \Int 1 n\}.$

\begin{lemma}\label{lem:orthvects}
$\cV$ is a set of $k(m+n)$ orthonormal vectors in $\WS$. 
\end{lemma}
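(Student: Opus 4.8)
The plan is to verify the two defining properties of an orthonormal set directly: that there are exactly $k(m+n)$ vectors, and that any two of them have inner product equal to the appropriate Kronecker delta. The counting is immediate: the first family $\{(G_{i,j},\vzero)\}$ is indexed by $j\in\Int 1k$ and $i\in\Int 1m$, giving $km$ vectors, and the second family $\{(\vzero,H_{j,i})\}$ is indexed by $j\in\Int 1k$ and $i\in\Int 1n$, giving $kn$ vectors, for a total of $k(m+n)$; since every vector in the first family has zero second component and every vector in the second family has zero first component, no vector is listed twice, so it remains only to check orthonormality.

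For orthonormality I would use the block structure of the inner product on $\WS$, namely $\ip{(G,H)}{(G',H')}=\ip{G}{G'}+\ip{H}{H'}$. A vector of the form $(G_{i,j},\vzero)$ and one of the form $(\vzero,H_{j',i'})$ are automatically orthogonal, since one inner product is between something and $\vzero$ and the other likewise. So the work reduces to two within-family computations. For the first family, $\ip{(G_{i,j},\vzero)}{(G_{i',j'},\vzero)} = \ip{u_iz_j^T}{u_{i'}z_{j'}^T} = \trace(z_j u_i^T u_{i'} z_{j'}^T) = (u_i^Tu_{i'})(z_{j'}^Tz_j) = \delta_{ii'}\delta_{jj'}$, using that $\{u_i\}$ is an orthonormal set (the columns of $U$) and $\{z_j\}$ is an orthonormal basis of $\R^k$. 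The computation for the second family, $\ip{(\vzero,H_{j,i})}{(\vzero,H_{j',i'})} = \ip{z_jv_i^T}{z_{j'}v_{i'}^T} = (z_j^Tz_{j'})(v_i^Tv_{i'}) = \delta_{jj'}\delta_{ii'}$, is identical in structure, again using orthonormality of $\{z_j\}$ and of $\{v_i\}$ (the columns of $V$). This shows $\cV$ consists of $k(m+n)$ pairwise orthogonal unit vectors in $\WS$.

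There is no real obstacle here; the only point requiring a word of care is that $U$ and $V$ are square orthogonal matrices from the full SVD \eqref{eq:fullsvdX}, so their full sets of columns $\{u_i\}_{i=1}^m$ and $\{v_i\}_{i=1}^n$ are genuinely orthonormal (not merely the first $r$ of them), which is exactly what makes the Kronecker deltas come out right for all indices. I would also remark, since it is used implicitly later, that a set of $k(m+n)$ orthonormal vectors in the $k(m+n)$-dimensional space $\WS$ is automatically an orthonormal basis, so $\cV$ can be used as the reference basis for coordinate arguments such as the one in the proof of Theorem \ref{thm:orbits}(b).
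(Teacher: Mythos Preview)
Your proof is correct and follows essentially the same approach as the paper's own proof: count the vectors, observe that cross-family orthogonality is automatic from the block structure of the inner product, and verify within-family orthonormality via $\ip{u_iz_j^T}{u_{i'}z_{j'}^T}=(u_i^Tu_{i'})(z_{j'}^Tz_j)$ and the analogous computation for the $H$'s. Your version is somewhat more detailed than the paper's (which just writes out one trace computation and says the rest is similar), and your added remarks about the full SVD and about $\cV$ being a basis are accurate but not strictly needed for the lemma as stated.
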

\begin{proof}
It is clear that $\cV$ contains $k(m+n)$ vectors and that every vector 
in the first subset is orthogonal to every vector in the second subset. 
Each of the two subsets forming $\cV$ is orthonormal. For example,
$\ip{(G_{i,j}, \vzero)}{(G_{i',j'},\vzero)} = \trace(z_{j'}^T z_j u_i^T u_{i'})
= 1$ if $i=i', j=j',$ and is $0$ otherwise.
A similar equation proves the same holds for the second subset.
\end{proof}

\begin{lemma}\label{lem:aap}
Let $\alpha, \alpha' \in \R$ with $\alpha \alpha'= -1$. 
Then any pair of vectors $(G_{i,j}, \vzero)$ and $(\vzero, H_{s,t})$ in $\cV$
can be replaced by the pair of vectors $(G_{i,j}, \alpha H_{s,t})$ and 
$(G_{i,j}, \alpha' H_{s,t})$ without changing the orthogonality of the elements 
in the modified set $\cV^\prime$.
\end{lemma}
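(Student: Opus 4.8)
The plan is to verify orthonormality directly using the explicit form of the vectors and the orthonormality of $\cV$ established in Lemma \ref{lem:orthvects}. The only vectors whose relationships change when we replace $(G_{i,j}, \vzero)$ and $(\vzero, H_{s,t})$ by $(G_{i,j}, \alpha H_{s,t})$ and $(G_{i,j}, \alpha' H_{s,t})$ are: (i) the two new vectors among themselves, and (ii) each new vector against every unchanged element of $\cV$. All other inner products are inherited unchanged from $\cV$. So there are only these two families of inner products to check.

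First I would handle the self-pair. Using bilinearity of the inner product on $\WS$ and the fact that $(G_{i,j},\vzero) \perp (\vzero, H_{s,t})$ with both of unit norm, we get
\begin{align*}
\ip{(G_{i,j}, \alpha H_{s,t})}{(G_{i,j}, \alpha' H_{s,t})}
&= \|G_{i,j}\|_F^2 + \alpha\alpha' \|H_{s,t}\|_F^2
= 1 + \alpha\alpha' = 0,
\end{align*}
since $\alpha\alpha' = -1$. (Note the new vectors are not unit norm — they have norm $\sqrt{1+\alpha^2}$ and $\sqrt{1+\alpha'^2}$ — so strictly this produces an orthogonal, not orthonormal, replacement; I would either normalize them or, more likely, state the claim as preserving orthogonality, which matches the lemma's wording ``without changing the orthogonality of the elements''.)

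Second I would check a new vector, say $(G_{i,j}, \alpha H_{s,t})$, against an arbitrary unchanged basis element $w \in \cV$. If $w = (G_{i',j'}, \vzero)$ with $(i',j') \neq (i,j)$, then $\ip{(G_{i,j},\alpha H_{s,t})}{w} = \ip{G_{i,j}}{G_{i',j'}} = 0$ by Lemma \ref{lem:orthvects}. If $w = (\vzero, H_{s',t'})$ with $(s',t') \neq (s,t)$, then $\ip{(G_{i,j}, \alpha H_{s,t})}{w} = \alpha \ip{H_{s,t}}{H_{s',t'}} = 0$, again by Lemma \ref{lem:orthvects}. The same computation applies verbatim to $(G_{i,j}, \alpha' H_{s,t})$. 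This exhausts all cases, so $\cV'$ is an orthogonal set of $k(m+n)$ vectors. I do not anticipate any real obstacle here — the only subtlety is the normalization bookkeeping noted above, and keeping careful track of which index pairs are excluded when invoking Lemma \ref{lem:orthvects}.
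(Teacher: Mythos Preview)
Your proof is correct and follows the same overall structure as the paper's: verify that the two new vectors are mutually orthogonal via $\alpha\alpha'=-1$, then check that each new vector is orthogonal to every unchanged element of $\cV$. The only difference is in the second step: you argue case-by-case against each type of basis element using Lemma~\ref{lem:orthvects}, whereas the paper observes that the two new vectors, being orthogonal and nonzero, span the same two-dimensional subspace as $(G_{i,j},\vzero)$ and $(\vzero,H_{s,t})$, and hence inherit orthogonality to the rest of $\cV$ automatically. Your direct verification is perfectly valid; the span argument is just a bit more economical.
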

\begin{proof}
The new vectors are orthogonal:
$\ip{(G_{i,j}, \alpha H_{s,t})}{(G_{i,j}, \alpha' H_{s,t})}
= \trace(z_j u_i^T u_i z_j) +\alpha \alpha' \trace(v_t z_s^T z_s v_t^T) =0.$ 
So 
$\vspan\left ( (G_{i,j}, \alpha H_{s,t}), (G_{i,j}, \alpha' H_{s,t}) \right ) = 
\vspan \left ( (G_{i,j}, \vzero), (\vzero, H_{s,t})\right ).$ 
Hence $(G_{i,j}, \alpha H_{s,t}), (G_{i,j}, \alpha' H_{s,t})$ are orthogonal
to all other elements of $\cV$.
\end{proof}

\subsection{A Full Rank Canonical Point}\label{sec:eig_frcp}
We now determine the eigenvalues and eigenvectors of the Hessian at a full rank canonical point $(W_c,S_c).$ Here $W_c\in \R^{m\by k}$ has rank $k$. Hence $k<m$.
It will be convenient to derive a slightly more general result by considering the 
curve $\{ (W_c a, a^{-1} S_c)\colon a\in \R, a\neq 0\} \subset \GLGorb(W_c,S_c).$
Each point $(W,S)$ on this curve is a critical point. 
The following three lemmas obtain expressions for the $k(m+n)$ 
eigenvalues and corresponding orthogonal eigenvectors of $\nabla^2 J(W, S)$.
To simplify the exposition we will assume $r\leq m\leq n$. 
Symmetric arguments cover the case $n<m$.
For $j\in \Int 1 k,$ we set $G_{i,j} = u_i \ve_j^T,$ $i\in \Int 1 m,$ 
and $H_{j,i} = \ve_j v_i^T,$ $i\in \Int 1 n.$

\begin{lemma}\label{lem:g1}
For $i\in \Int {m+1} n$ and $j\in \Int 1k,$ $(\vzero, H_{j,i})$ is an eigenvector 
of $\nabla^2J(aW_c,a^{-1}S_c)$ with eigenvalue $\rho_0=a^2$.
\end{lemma}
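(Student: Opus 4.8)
The plan is to directly compute $\nabla^2 J(aW_c, a^{-1}S_c)[(\vzero, H_{j,i})]$ using the Hessian formula \eqref{eq:hessianmap} and verify that the result equals $a^2(\vzero, H_{j,i})$ for $i\in\Int{m+1}n$. Recall that in this index range the vector $v_i$ is not a column of $V$ in the SVD \eqref{eq:fullsvdX}, so the key orthogonality facts are $Xv_i=\vzero$ (since $X=\sum_{\ell=1}^m\sigma_\ell u_\ell v_\ell^T$ and the $v_\ell$ are orthonormal) and $v_i^TS_c^T=\vzero$, because every row of $S_c=\bigl[\begin{smallmatrix}\Lambda\bar V^T\\ C_0^TV_0^T\end{smallmatrix}\bigr]$ is a combination of rows $v_\ell^T$ with $\ell\in\Int 1r$ or $\ell\in\Int{r+1}n\cap\{\text{columns of }V_0\}$, all of which are orthonormal columns of $V$ and hence orthogonal to $v_i$ for $i>m$.

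First I would write $(W,S)=(aW_c,a^{-1}S_c)$ and set $E=WS-X=W_cS_c-X$ (the scaling cancels in the product). Then I would substitute $(G,H)=(\vzero, H_{j,i})$ with $H_{j,i}=\ve_j v_i^T$ into \eqref{eq:hessianmap}, which gives
\[
\nabla^2 J(W,S)[(\vzero,H_{j,i})] = \bigl(W H_{j,i} S^T + E H_{j,i}^T,\ W^TW H_{j,i}\bigr).
\]
For the first component: $WH_{j,i}S^T = a W_c \ve_j v_i^T a^{-1} S_c^T = W_c\ve_j (S_c v_i)^T$, and $S_c v_i = \vzero$ by the orthogonality fact above; similarly $EH_{j,i}^T = (W_cS_c - X)v_i \ve_j^T$, and both $S_c v_i=\vzero$ and $Xv_i=\vzero$, so this term vanishes. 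Hence the first component is $\vzero$, matching the first component of $a^2(\vzero,H_{j,i})$. For the second component: $W^TW H_{j,i} = a^2 W_c^TW_c\ve_j v_i^T$. Since $W_c = [\bar U\ \ \vzero_{m\times(k-q)}]$ has orthonormal columns in its first $q$ slots and zeros elsewhere, $W_c^TW_c = \bigl[\begin{smallmatrix}I_q&\vzero\\\vzero&\vzero\end{smallmatrix}\bigr]$, which is \emph{not} the identity when $q<k$ — this is the one place requiring a small observation.

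The main (minor) obstacle is therefore handling the case $q<k$: one must confine the index $j$ appropriately or reconcile the statement with the projection $W_c^TW_c$. Looking at the lemma as stated, it is situated in \S\ref{sec:eig_frcp} under the standing assumption that $(W_c,S_c)$ is a \emph{full rank} canonical point, i.e. $\rank(W_c)=k$, which forces $q=k$ and hence $W_c^TW_c = I_k$. Under that assumption $W_c^TW_c\ve_j = \ve_j$, so the second component becomes $a^2\ve_j v_i^T = a^2 H_{j,i}$, completing the verification that $(\vzero,H_{j,i})$ is an eigenvector with eigenvalue $\rho_0=a^2$. Finally, orthonormality of the collection $\{(\vzero,H_{j,i})\}$ (and its orthogonality to the other eigenvectors derived in the companion lemmas) follows from Lemma \ref{lem:orthvects}, since $H_{j,i}=\ve_j v_i^T$ with $\{\ve_j\}$ and $\{v_i\}$ orthonormal. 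I would close by noting that the same computation works verbatim in the symmetric case $n<m$ after swapping the roles of the two factors.
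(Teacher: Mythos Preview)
Your proof is correct and follows the same direct computation as the paper's own argument. One small caveat: your opening claim that $v_i^T S_c^T=\vzero$ for the general canonical form $S_c=\bigl[\begin{smallmatrix}\Lambda\bar V^T\\ C_0^TV_0^T\end{smallmatrix}\bigr]$ is false when $C_0\neq\vzero$ (for $i>m$ the vector $v_i$ is itself a column of $V_0$), but as you correctly observe later, this lemma sits in \S\ref{sec:eig_frcp} under the full-rank assumption $q=k$, so $S_c=\Lambda\bar V^T$ and the needed orthogonality $\bar V^T v_i=\vzero$ holds.
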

\begin{proof}
$\nabla^2J(aW_c,a^{-1}S_c)[(\vzero, H_{j,i})]
= (W_cH_{j,i}S_c^T +(W_cS_c-X)(H_{j,i})^T, \ 
	a^2 W_c^TW_c H_{j,i} )
=  (W_c \ve_j v_i^T \bar V \Lambda + (W_cS_c-X)v_i \ve_j^T ,
    a^2 \ve_j v_i^T )
 = a^2 (\vzero, H_{j,i}).$
\end{proof}

Now consider vectors constructed from $G_{i,j}$ and $H_{j,i}$
for $i\in \Int 1m$ with $u_i$ not a column of $\bar U.$
The following lemma separates this into two parts; 
first $i\in \Int 1r,$ then $i\in \Int {r+1}m.$

\begin{lemma}\label{lem:g2}
For $j\in \Int 1k,$ and $i\in \Int 1 m$ with $u_i$ not a column in $\bar U,$ 
the following hold:
(a)
If $i\in \Int 1 r,$ there exist $\alpha_{i,j}, \alpha'_{i,j} \in \R$ with 
$\alpha_{i,j}\alpha'_{i,j}= -1$  such that $(G_{i,j}, \alpha_{i,j} H_{j,i})$ 
and $(G_{i,j}, \alpha'_{i,j} H_{j,i})$ 
are eigenvectors of $\nabla^2 J(W_c a, a^{-1}S_c)$ with corresponding eigenvalues
\begin{equation}
\rho_{i,j} \textstyle =
	\frac{1}{2} \left (\frac{\lambda_j^2 +a^4}{a^2} 
		- \sqrt{\left(\frac{\lambda_j^2 - a^4}{a^2}\right)^2 + 4\sigma_i^2}
		 \right )\quad 
		 \begin{cases}
		>0,  &\textrm{if } \lambda_j > \sigma_i;\\
		= 0, &\textrm{if } \lambda_j = \sigma_i;\\
		<0, &\textrm{if } \lambda_j < \sigma_i.
		\end{cases}
		 \label{eq:rho+}
\end{equation}
\begin{equation}
	\rho'_{i,j} \textstyle =
	 \frac{1}{2} \left (\frac{\lambda_j^2 +a^4}{a^2} 
		+ \sqrt{\left(\frac{\lambda_j^2 - a^4}{a^2}\right)^2 + 4\sigma_i^2} 
		\right )
	 \quad >0. 
\end{equation}
(b) Alternatively, if $i\in \Int {r+1} m,$ 
the pair of vectors $(G_{i,j},  \vzero)$ and $(\vzero, H_{j,i})$, 
are eigenvectors of $\nabla^2J(aW_c,a^{-1}S_c)$ with eigenvalues 
$\rho_j = \lambda_j^2/a^2 \geq 0,$ and $\rho_0=a^2>0,$ respectively.
\end{lemma}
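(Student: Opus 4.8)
The plan is to substitute the explicit point $(W,S)=(W_c a,\,a^{-1}S_c)$ into the Hessian-map formula \eqref{eq:hessianmap} and read off its action on the proposed eigenvectors. Since we are in the full-rank case $q=k$, the canonical form \eqref{eq:bsp} reduces to $W_c=\bar U$, $S_c=\Lambda\bar V^T$, so $W=a\bar U$, $S=a^{-1}\Lambda\bar V^T$, and hence $E=WS-X=\bar U\Lambda\bar V^T-X=(\bar U\bar U^T-I)X$, $SS^T=a^{-2}\Lambda^2$, and $W^TW=a^2 I_k$. I would first record the SVD identities $Xv_i=\sigma_i u_i$ and $X^Tu_i=\sigma_i v_i$ for $i\in\Int 1 m$, together with the observation that when $u_i$ is not a column of $\bar U$ the matching $v_i$ is not a column of $\bar V$, so that $\bar U^Tu_i=\vzero$ and $\bar V^Tv_i=\vzero$; these combine to give $Ev_i=-\sigma_i u_i$, $u_i^TE=-\sigma_i v_i^T$, and $\ve_j^T\Lambda^2=\lambda_j^2\ve_j^T$.

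For part (a) I would feed $(G,H)=(G_{i,j},\alpha H_{j,i})=(u_i\ve_j^T,\ \alpha\ve_j v_i^T)$ into \eqref{eq:hessianmap}. The cross terms $WHS^T$ and $W^TGS$ vanish because each carries a factor $v_i^T\bar V$ or $\bar U^Tu_i$, and the rest collapses, via the identities above, to $\nabla^2 J(W,S)[(G_{i,j},\alpha H_{j,i})]=\bigl((a^{-2}\lambda_j^2-\alpha\sigma_i)\,G_{i,j},\ (a^2\alpha-\sigma_i)\,H_{j,i}\bigr)$. This equals $\rho\,(G_{i,j},\alpha H_{j,i})$ precisely when $\rho=a^{-2}\lambda_j^2-\alpha\sigma_i$ and $\rho\alpha=a^2\alpha-\sigma_i$; eliminating $\rho$ and using $\sigma_i>0$ turns the pair of conditions into the single quadratic $\alpha^2+\bigl((a^4-\lambda_j^2)/(a^2\sigma_i)\bigr)\,\alpha-1=0$, which has two distinct real roots $\alpha_{i,j},\alpha'_{i,j}$ with product $-1$ by Vieta. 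Substituting each root back into $\rho=a^{-2}\lambda_j^2-\alpha\sigma_i$ and simplifying --- the one manipulation that needs a little care, using $\sigma_i\sqrt{\mathrm{disc}}=\sqrt{((\lambda_j^2-a^4)/a^2)^2+4\sigma_i^2}$ --- would yield exactly the two values $\rho_{i,j}$ and $\rho'_{i,j}$ in the statement (labelling $\alpha_{i,j}$ as the root attached to $\rho_{i,j}$). The trichotomy for $\rho_{i,j}$ then follows by comparing squares, using $(\lambda_j^2+a^4)^2-(\lambda_j^2-a^4)^2=4a^4\lambda_j^2$: $\rho_{i,j}$ is positive, zero, or negative according as $\lambda_j>\sigma_i$, $\lambda_j=\sigma_i$, or $\lambda_j<\sigma_i$, while $\rho'_{i,j}>0$ as the sum of a positive and a nonnegative quantity.

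For part (b) I would run the same computation with $\sigma_i=0$ (so in addition $Xv_i=\vzero$ and $X^Tu_i=\vzero$). Then $(G_{i,j},\vzero)$ fed into \eqref{eq:hessianmap} produces only $G_{i,j}SS^T=a^{-2}\lambda_j^2\,G_{i,j}$ in the first slot and $\vzero$ in the second, so it is an eigenvector with eigenvalue $\rho_j=\lambda_j^2/a^2$; and $(\vzero,H_{j,i})$ produces only $W^TW H_{j,i}=a^2 H_{j,i}$, so it is an eigenvector with eigenvalue $\rho_0=a^2$. Finally, for orthogonality I would invoke the basis lemmas: with $z_j=\ve_j$, Lemma \ref{lem:orthvects} says the family of all $(G_{i,j},\vzero)$ and $(\vzero,H_{j,i})$ is orthonormal, and Lemma \ref{lem:aap}, applied to the pair indexed by $(j,i)$ with $\alpha_{i,j}\alpha'_{i,j}=-1$, says that replacing that pair by $(G_{i,j},\alpha_{i,j}H_{j,i})$ and $(G_{i,j},\alpha'_{i,j}H_{j,i})$ keeps the whole family orthogonal (which, together with Lemma \ref{lem:g1}, covers all the eigenvectors under discussion). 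I do not expect a genuine obstacle: the argument is a substitution into \eqref{eq:hessianmap} plus one quadratic, and the only thing to watch carefully is keeping the powers of $a$ and the sign inside $\alpha\sigma_i$ straight when massaging the root formula into the advertised closed form.
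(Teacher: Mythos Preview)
Your proposal is correct and essentially identical to the paper's proof: both substitute $(W,S)=(a\bar U,\,a^{-1}\Lambda\bar V^T)$ into \eqref{eq:hessianmap}, use $\bar U^Tu_i=\vzero$, $\bar V^Tv_i=\vzero$ to kill the cross terms, arrive at the same pair $\rho=a^{-2}\lambda_j^2-\alpha\sigma_i$, $\rho=a^2-\sigma_i/\alpha$, solve the resulting quadratic in $\alpha$ (the paper writes it as $\sigma_i\alpha^2-\tfrac{\lambda_j^2-a^4}{a^2}\alpha-\sigma_i=0$, your normalized form is the same), and deduce the sign of $\rho_{i,j}$ from $c^2-d^2=4(\lambda_j^2-\sigma_i^2)$, which is exactly your comparison of squares. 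Part (b) and the orthogonality remarks via Lemmas~\ref{lem:orthvects} and~\ref{lem:aap} also match.
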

\begin{proof}
(a) Under the stated assumptions,
$\sigma_i > 0,$ $u_i^T\bar U = \vzero$ and $v_i^T\bar V =\vzero.$ 
Let $(G,H) = (u_i\ve_j^T, \alpha \ve_j v_i^T)$ with $\alpha \in \R.$
For $(W,S)=(W_c a, a^{-1}S_c)$ we seek $\alpha, \rho\in \R$ such that $(G,H)$
is an eigenvector of $\nabla^2 J$ with eigenvalue $\rho$. 
Using \eqref{eq:hessianmap}, this is equivalent to
\begin{align}\label{eq:Heigvec_0}
\!\!\!\! GSS^T + WHS^T + (WS -X) H^T = \rho G, \quad
W^TWH + W^TGS +G^T (WS - X) =\rho H. 
\end{align}
In the present context these equations become
$u_i \ve_j^T (a^{-2} \Lambda^2 ) -\alpha X v_i \ve_j^T
=  (a^{-2}\lambda_j^2 - \alpha \sigma_i ) u_i \ve_j^T = \rho u_i \ve_j^T,$ and
$\alpha a^2 \ve_j v_i^T - \ve_j u_i^TX
=  \alpha (a^2-\sigma_i/\alpha )\ve_j v_i^T= \rho \ve_jv_i^T.$
Solving for $\rho$ we obtain 
\begin{equation}\label{eq:rho_alpha_eqs}
\rho = a^{-2}\lambda_j^2 - \alpha \sigma_i \quad\textrm{and}\quad 
\rho = a^2- \sigma_i/\alpha.
\end{equation}
Thus $\nabla^2 J(W_ca,a^{-1}S_c)[(G,H)]=\rho~(G,H)$ 
if and only if $\alpha$ is a real root of the equation
\begin{equation}\label{eq:quad_alpha}
\sigma_i \alpha^2 - \frac{\lambda_j^2- a^4}{a^2}~\alpha - \sigma_i = 0.
\end{equation}   
This equation has two real roots $\alpha^+$ and $\alpha^{-}$ with
$\alpha^\pm \textstyle=\frac{1}{2\sigma_i} \left ( \frac{\lambda_j^2 -a^4}{a^2} \pm 
\sqrt{\left ( \frac{\lambda_j^2 - a^4}{a^2}\right )^2 + 4 \sigma_i^2}~ \right )
	\label{eq:alpha_pm}.
	$
Using the first equation for $\rho$ above and this result
we find
\begin{align}
\!\!\!\!\!
\rho(\alpha^+) &=\textstyle
	\frac{1}{2} \left (\frac{\lambda_j^2 +a^4}{a^2} 
		- \sqrt{\left(\frac{\lambda_j^2 - a^4}{a^2}\right)^2 + 4\sigma_i^2}
		 \right ), \ \textrm{eigenvector: }(u_i\ve_j, \alpha^{+} \ve_j v_i^T), \label{eq:alpha+}\\
\!\!\!\!\!
\rho(\alpha^-) &= \textstyle
	\frac{1}{2} \left (\frac{\lambda_j^2 +a^4}{a^2} 
		+ \sqrt{\left(\frac{\lambda_j^2 - a^4}{a^2}\right)^2 + 4\sigma_i^2} 
		\right ),\  \textrm{eigenvector: } (u_i\ve_j, \alpha^- \ve_j v_i^T).\label{eq:alpha-}
\end{align}
It is readily checked from \eqref{eq:alpha_pm} that $\alpha^+\alpha^- = -1$. 
Hence these eigenvectors have the form $(G_{i,j}, \alpha_{i,j} H_{i,j})$ 
and $(G_{i,j}, \alpha'_{i,j} H_{i,j})$ 
with
$\alpha_{i,j}=\alpha^+$, $\alpha_{i,j}^\prime = \alpha^{-}$ and 
$\alpha_{i,j}\alpha_{i,j}^\prime = -1$.
Let 
\begin{equation}\label{eq:def_cd}
\textstyle
c=\frac{\lambda_j^2 + a^4}{a^2},
\quad  \textrm{ and } \quad
d= \sqrt{\left(\frac{\lambda_j^2 - a^4}{a^2}\right)^2 + 4\sigma_i^2}. 
\end{equation}
Then $c,d > 0,$ $\rho(\alpha^+) = \nhalf (c - d),$ and $\rho(\alpha^-)= \nhalf(c+d)$. 
Clearly $\rho(\alpha^{-}) >0.$
Simple algebra verifies that $c^2-d^2 = -4(\sigma_i^2 - \lambda_j^2).$  
This gives the sign classifications of $\rho_{i,j}$ in \eqref{eq:rho+}.

(b) 
In this case, $\sigma_i=0.$ Hence $u_i^T X =\vzero$ 
and $X v_i =\vzero.$ 
In addition, $u_i^T \bar U =\vzero$ and $v_i^T \bar V = \vzero$.
Hence $u_i^T WS = u_i^T\bar U\Lambda \bar V^T =0.$ 
Thus
$\nabla^2J(aW_c,a^{-1}S_c)[(G_{i,j}, \vzero)]
= (u_i\ve_j^T a^{-2} \Lambda^2, \vzero)
=  a^{-2}\lambda_j^2 (G_{i,j} ,\vzero),$ and
$\nabla^2J(aW_c,a^{-1}S_c)[(\vzero, H_{j,i})]
= (\vzero,  a^2 \ve_j v_i^T)
=  a^2 (\vzero, H_{j,i} ).$
\end{proof}

Lemma \ref{lem:g1} and Lemma \ref{lem:g2}  
have identified $(m+n)k -2k^2$ eigenvectors.
The remaining $2k^2$ eigenvectors are found by 
considering indices for which 
$u_i$ is a column of $\bar  U$.

For each column $\bar u_j$ of $\bar U,$ $j\in \Int 1k,$  there exists $i \in \Int 1m$
such that $\bar u_j = u_i,$ $\bar v_j = v_i,$ and $\lambda_j = \sigma_i.$
If $i\in \Int 1r,$ then $\lambda_j = \sigma_i>0;$ otherwise $i\in \Int {r+1} m$ and
$\lambda_j =\sigma_i=0$. We can partition the index set $\Int 1k$ accordingly into 
$\cS,$ with $s\in \cS$ if $\lambda_s>0$, and $\cT,$ with $t\in \cT$ if $\lambda_t=0.$
We assume $\cS$ and $\cT$ are nonempty. But the case when one of $\cS$ or $\cT$ is empty, is also covered by the result below.

\begin{lemma}\label{lem:g3}
(a) For each $j\in \Int 1 k$ and $s\in \cS,$ there exist $\beta_{j,s}, \beta'_{j,s} \in \R$ 
with $\beta_{j,s}\beta'_{j,s}= -1,$ 
such that $(\bar u_j \ve_s^T, \beta_{j,s} \ve_j \bar v_s^T)$ and 
$(\bar u_j \ve_s^T, \beta'_{j,s} \ve_j \bar v_s^T)$ 
are eigenvectors of $\nabla^2 J(W_ca,a^{-1}S_c)$ 
with corresponding eigenvalues $\rho_{j,s} = 0$ and 
$\rho'_{j,s} = {\lambda_s^2}/{a^2} + a^2 > 0,$ respectively.

(b) For each $j\in \Int 1 k$ and $t\in \cT,$
$(\bar u_j \ve_t, \vzero),$ and $(\vzero, \ve_j \bar v_t^T),$ 
are eigenvectors of $\nabla^2J(aW_c,a^{-1}S_c)$ with eigenvalues 
$0$ and $\rho_0=a^2>0,$ respectively.
\end{lemma}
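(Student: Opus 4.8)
The plan is to verify the claimed eigenpairs by substituting each candidate vector directly into the Hessian-map formula \eqref{eq:hessianmap} at $(W,S)=(aW_c,a^{-1}S_c)$, exactly as in the proof of Lemma~\ref{lem:g2}. Since the canonical point is full rank, $q=k$, so $W_c=\bar U$ with $\bar U^T\bar U=I_k$ and $S_c=\Lambda\bar V^T$ with $\bar V^T\bar V=I_k$. First I would record the identities that trivialize the bookkeeping: $W^TW=a^2 I_k$, $SS^T=a^{-2}\Lambda^2$, $WS=\bar U\Lambda\bar V^T$, and --- writing $\bar u_j=u_i$, so that $\bar v_j=v_i$ and $\lambda_j=\sigma_i$ --- the SVD relations $X\bar v_j=\lambda_j\bar u_j$ and $\bar u_j^TX=\lambda_j\bar v_j^T$. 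The crucial point is that these last two hold verbatim whether or not $\lambda_j=0$, and every cancellation below is an instance of them.

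For part~(a) fix $s\in\cS$ (so $\lambda_s>0$) and $j\in\Int 1k$, and put $(G,H)=(\bar u_j\ve_s^T,\ \beta\,\ve_j\bar v_s^T)$ into \eqref{eq:hessianmap}. Using $\bar U^T\bar u_j=\ve_j$, $\bar v_s^T\bar V=\ve_s^T$, $\ve_s^T\Lambda=\lambda_s\ve_s^T$, and $X\bar v_s=\lambda_s\bar u_s$, the term $(WS-X)H^T$ vanishes and the first coordinate collapses to $(a^{-2}\lambda_s^2+\beta\lambda_s)\,G$; using instead $\bar u_j^TX=\lambda_j\bar v_j^T$, the term $G^T(WS-X)$ vanishes and the second coordinate collapses to $(a^2\beta+\lambda_s)\,\ve_j\bar v_s^T$. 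Hence $(G,H)$ is an eigenvector with eigenvalue $\rho$ exactly when $\rho=a^{-2}\lambda_s^2+\beta\lambda_s=a^2+\lambda_s/\beta$; clearing the denominator gives a quadratic in $\beta$ whose leading and constant coefficients are negatives of each other, so its two real roots $\beta_{j,s},\beta'_{j,s}$ satisfy $\beta_{j,s}\beta'_{j,s}=-1$. I would then pin down one root by inspection, $\beta_{j,s}=-\lambda_s/a^2$, which gives $\rho_{j,s}=0$; this is no accident, since the resulting vector is precisely the orbit tangent $(WK,-KS)$ with $K=\ve_j\ve_s^T$, already known to lie in $\ker\nabla^2J(W,S)$. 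The conjugate root $\beta'_{j,s}=a^2/\lambda_s$ then gives $\rho'_{j,s}=\lambda_s^2/a^2+a^2>0$. Orthogonality of the two eigenvectors follows immediately from Lemma~\ref{lem:aap} once $\beta_{j,s}\beta'_{j,s}=-1$ is in hand.

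For part~(b) fix $t\in\cT$; then $\lambda_t=0$ and the matching left singular vector has zero singular value, so $X\bar v_t=\vzero$ and $\bar u_t^TX=\vzero$. Substituting $(G,H)=(\bar u_j\ve_t^T,\vzero)$ into \eqref{eq:hessianmap}, the first coordinate is $a^{-2}\lambda_t^2\,G=\vzero$ and the second is $W^TGS+G^T(WS-X)=\lambda_t\,\ve_j\bar v_t^T+\ve_t(\lambda_j\bar v_j^T-\lambda_j\bar v_j^T)=\vzero$, so the eigenvalue is $0$. Substituting $(G,H)=(\vzero,\ve_j\bar v_t^T)$, the $WHS^T$ term carries the factor $\lambda_t=0$ and the $(WS-X)H^T$ term reduces to $(\lambda_t\bar u_t-X\bar v_t)\ve_j^T=\vzero$, leaving $W^TWH=a^2H$, so the eigenvalue is $a^2=\rho_0$. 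Both of these vectors already belong to the orthonormal set $\cV$, so no additional orthogonality check is needed.

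Finally I would do the count: parts (a) and (b) together produce $2k\,|\cS|+2k\,|\cT|=2k^2$ eigenvectors; these are pairwise orthogonal and orthogonal to the eigenvectors of Lemmas~\ref{lem:g1}--\ref{lem:g2}, because the underlying $\cV$-vectors involve disjoint index sets, so together with those lemmas they form a complete orthonormal eigenbasis of $\nabla^2J(aW_c,a^{-1}S_c)$ on $\WS$. The only step requiring genuine care is checking the two cross-term cancellations through the SVD identities above --- in particular making sure they survive the degenerate case $\lambda=0$ appearing in part~(b) --- together with the final accounting of orthogonality and completeness; there is no deeper obstacle beyond this bookkeeping.
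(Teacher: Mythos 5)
Your proof is correct and follows essentially the same route as the paper: substitute the candidate vectors into \eqref{eq:hessianmap}, reduce to the two scalar conditions $\rho=a^{-2}\lambda_s^2+\beta\lambda_s=a^2+\lambda_s/\beta$, and solve the resulting quadratic in $\beta$. The only (harmless) difference is that the paper applies the quadratic formula and simplifies the discriminant to obtain $\beta^{\pm}$ explicitly, whereas you read off the root product $-1$ from the coefficients and identify the zero-eigenvalue root $\beta_{j,s}=-\lambda_s/a^2$ by inspection as the orbit-tangent direction $(WK,-KS)$ with $K=\ve_j\ve_s^T$ --- a nice observation consistent with \S 2.1.
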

\begin{proof}
Recall that $W_c=\bar U,$ $S_c= \Lambda \bar V^T,$ $W_c^TW_c = I_k,$ 
and $S_cS_c^T=\Lambda^2.$ 
In addition, $\bar u_j^T X = \lambda_j \bar v_j,$ $X\bar v_j = \lambda_j u_j,$ 
$\bar u_j^T (W_cS_c-X) =\vzero,$ and $(W_cS_c-X) \bar v_j =\vzero,$ $j\in \Int 1k$. 

(a) Consider $(G, H) = (\bar u_j \ve_s^T, \beta \ve_j \bar v_s^T),$ 
for $ j\in \Int 1k$, $s\in \cS,$ and $\beta \neq 0.$
By \eqref{eq:hessianmap}, $(G, H)$ is an eigenvector of 
$\nabla^2J(W_ca,a^{-1}S_c)$ with eigenvalue $\rho$ if and only if
\begin{align*}
&\bar u_j \ve_s^T (a^{-2} \Lambda^2) 
	+ \beta \bar U \ve_j \bar v_s^T \bar V \Lambda
	+ \beta (\bar U \Lambda \bar V^T -X )\bar v_s \ve_j^T
	=  (a^{-2}\lambda_s^2 + \beta \lambda_s ) \bar u_j \ve_s^T
	= \rho \bar u_j \ve_s^T, \\
&\beta a^2 \ve_j \bar v_s^T 
	+ \bar U^T \bar u_j \ve_s^T \Lambda \bar V^T 
	+ \ve_s \bar u_j^T(\bar U \Lambda \bar V - X)
	=  (\beta a^2 + \lambda_s ) \ve_j \bar v_s^T
	= \rho \ve_j \bar v_s^T .
\end{align*}
Solving these equations for $\rho$ gives
$ \rho = a^{-2}\lambda_s^2 + \beta \lambda_s 
= a^2 + \lambda_s/\beta.$
These are the equations in \eqref{eq:rho_alpha_eqs} except that 
$\sigma_i$ has been replaced by $-\lambda_s$ and $\alpha$ by $\beta$. 
After these adjustments to \eqref{eq:quad_alpha} we find two real roots 
$\beta^+$ and $\beta^{-}$ with,
\begin{equation}\textstyle
\beta^\pm 
= -\frac{1}{2\lambda_s} \left ( \frac{\lambda_s^2 -a^4}{a^2} \pm 
\sqrt{\left ( \frac{\lambda_s^2 - a^4}{a^2}\right )^2 + 4 \lambda_s^2}~ \right ).
	\label{eq:beta_pm}
\end{equation}
Note that $s\in \cS$ and hence $\lambda_s>0$.
Let $d^\prime$ denote the square root term in \eqref{eq:beta_pm}.
Then 
$
d^\prime = \frac{(\lambda_s^4 -2 a^4 \lambda_s^2 + a^8 
	+ 4\lambda_s^2 a^4)^{\nhalf} }{a^2}
	=  \frac{\lambda_s^2 + a^4}{a^2}.
$
Hence
$\beta^+ = - \nicefrac{\lambda_s}{a^2},$ 
and $\beta^-  = \nicefrac{a^2}{\lambda_s}.$
Note that $\beta^+\beta^- = -1.$
The expressions for $\beta^\pm$ yield the following eigenvalues 
and corresponding eigenvectors
\begin{align}
\rho(\beta^+) &= 0, &&\textrm{eigenvector: } 
	(\bar u_j\ve_s^T, -\nicefrac{\lambda_s}{a^2} \ve_j \bar v_s^T);\\
\rho(\beta^-) &= {\lambda_s^2}/{a^2} + a^2>0,
&&\textrm{eigenvector: }
	(\bar u_j\ve_s^T, - \nicefrac{a^2}{\lambda_s} \ve_j \bar v_s^T).
\end{align}

(b) For $(\bar u_j \ve_t, \vzero),$ and 
$(\vzero, \ve_j \bar v_t^T),$
$j\in \Int 1 k,$ $t\in \cT,$ we have 
$\nabla^2 J(W_ca,a^{-1} S_c) [(\bar U_j \ve_t,\vzero)]  = (\bar u_j \ve_t^T (a^{-2} \Lambda^2), \bar U^T \bar u_j \ve_t^T \Lambda \bar V^T)
= (\bar u_j \ve_t^T (a^{-2} \lambda_t^2), \ve_j \lambda_t \bar v_t^T).$ Noting that $\lambda_t=0$, this simplifies to $0 (\bar u_j \ve_t^T , \vzero).$
Similarly,
$\nabla^2 J(W_ca,a^{-1} S_c) [(\vzero, \ve_j \bar v_t^T)] =
( \bar U \ve_j \bar v_t^T \bar V \Lambda , a^2 \ve_j \bar v_t^T)
= (\bar u_j \ve_t^T \lambda_t , a^2 \ve_j \bar v_t^T)
= a^2 (\vzero, \ve_j v_t^T).$
Thus $\{ (\bar u_j \ve_t, \vzero)\colon j\in \Int 1 k, t\in \cT \}$ is a set of $k|\cT|$ 
eigenvectors of  $\nabla^2 J(W_ca,a^{-1} S_c)$ with eigenvalue $0$,  and 
$\{ (\vzero , \ve_j \bar v_t^T )\colon j\in \Int 1 k, t\in \cT \}$ is a set of $k|\cT|$ 
eigenvectors with eigenvalue $a^2>0$.
\end{proof}

The above three lemmas 
have displayed $k(n+m)$ eigenvalues of $\nabla^2J(W_ca, a^{-1} S_c).$
As expected, as $a$ varies over the nonzero reals, 
the positive eigenvalues remain positive and the negative eigenvalues remain negative. 
We are particularly interested in the negative eigenvalues.

\begin{lemma}\label{lem:eigorder}
$\lambda_{\min}(\nabla^2 J(W_ca,a^{-1}S_c))<0$ if and only if
there exists $p\in \Int 1 k$ with $\lambda_p <\sigma_p$.
\end{lemma}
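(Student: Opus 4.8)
The plan is to read the negative eigenvalues of $\nabla^2 J(W_ca,a^{-1}S_c)$ directly off Lemmas \ref{lem:g1}--\ref{lem:g3}, and then reduce the assertion to a combinatorial fact about which of the fixed left singular vectors $u_1,\dots,u_m$ occur as columns of $\bar U$. Those three lemmas between them exhibit all $k(m+n)$ eigenvalues. Every eigenvalue coming from Lemma \ref{lem:g1}, from Lemma \ref{lem:g2}(b), or from Lemma \ref{lem:g3} is nonnegative (each equals $a^2$, $\lambda_j^2/a^2$, $0$, or $\lambda_s^2/a^2+a^2$), and among the eigenvalues in Lemma \ref{lem:g2}(a) the $\rho'_{i,j}$ are strictly positive while $\rho_{i,j}$ is strictly negative exactly when $\lambda_j<\sigma_i$. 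So the first step is the bookkeeping statement: $\lambda_{\min}(\nabla^2 J(W_ca,a^{-1}S_c))<0$ iff there is a pair $(i,j)$ with $i\in\Int 1 r$, $u_i$ not a column of $\bar U$, $j\in\Int 1 k$, and $\lambda_j<\sigma_i$; and since $\lambda_k$ is the least of the $\lambda_j$, this is the same as: there is $i\in\Int 1 r$ with $u_i\notin\bar U$ and $\sigma_i>\lambda_k$. This condition does not involve $a$, which is why the lemma holds along the whole curve.

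Next I would observe that one of the two target equivalences is free. Since the $q=k$ numbers $\lambda_1^2\ge\cdots\ge\lambda_k^2$ form a size-$k$ sub-multiset of $\sigma_1^2\ge\cdots\ge\sigma_m^2$, we always have $\lambda_j\le\sigma_j$; hence ``$\exists\,p\in\Int 1 k$ with $\lambda_p<\sigma_p$'' is equivalent to ``$(W_c,S_c)$ is not maximal'', i.e. the multiset $\{\sigma_i^2 : u_i\text{ is a column of }\bar U\}$ is not $\{\sigma_1^2,\dots,\sigma_k^2\}$.

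It then remains to prove the core equivalence: there is $i\in\Int 1 r$ with $u_i\notin\bar U$ and $\sigma_i>\lambda_k$ iff $(W_c,S_c)$ is not maximal. For the direction ``not maximal $\Rightarrow$'', take the least $p$ with $\lambda_p<\sigma_p$; then $\sigma_p>0$ (so $p\le r$), minimality gives $\lambda_1=\sigma_1,\dots,\lambda_{p-1}=\sigma_{p-1}$ and $\lambda_p,\dots,\lambda_k<\sigma_p$, so exactly $p-1$ columns of $\bar U$ carry a singular value $\ge\sigma_p$; since at least $p$ indices $i$ satisfy $\sigma_i\ge\sigma_p$, some such $i^*\le r$ has $u_{i^*}\notin\bar U$, and then $\sigma_{i^*}\ge\sigma_p>\lambda_p\ge\lambda_k$. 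For the converse, if $(W_c,S_c)$ were maximal then $\{\sigma_i^2:u_i\in\bar U\}=\{\sigma_1^2,\dots,\sigma_k^2\}$; writing $a=|\{i:\sigma_i>\sigma_k\}|$, maximality forces $\bar U$ to contain exactly $a$ columns with singular value $>\sigma_k$, hence all $a$ of the $u_i$ with $\sigma_i>\sigma_k$, so no $u_i$ with $\sigma_i>\sigma_k=\lambda_k$ can lie outside $\bar U$ --- contradicting the hypothesis. Combining the three steps gives the lemma.

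The step I expect to require the most care is the counting in the core equivalence: ``maximal'' is a condition on multisets of squared singular values rather than on which particular eigenvectors were selected, so both directions must argue at the level of how many columns of $\bar U$ realize each eigenvalue level, keeping in mind the convention (in force throughout Lemmas \ref{lem:g2}--\ref{lem:g3}) that each column of $\bar U$ equals one of the fixed vectors $u_1,\dots,u_m$. Everything else is routine bookkeeping on the eigenvalue list.
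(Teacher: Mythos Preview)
Your proposal is correct and follows essentially the same route as the paper: both reduce the question, via the eigenvalue catalogue in Lemmas \ref{lem:g1}--\ref{lem:g3}, to whether some $u_i$ with $\sigma_i>\lambda_j$ lies outside $\bar U$, and then argue combinatorially that this happens exactly when the selected singular values are not the $k$ largest. Your version is somewhat more careful than the paper's in two respects: you make explicit the general inequality $\lambda_j\le\sigma_j$ (so that ``not maximal'' coincides with ``$\exists\,p:\lambda_p<\sigma_p$''), and your counting argument handles repeated singular values cleanly at the level of multisets, whereas the paper's phrasing (``a corresponding left singular vector $u_p$ for $\sigma_p$ has been omitted'') is loose when eigenvalues repeat. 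The reduction to $j=k$ is a minor streamlining not in the paper. No gaps.
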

\begin{proof}
(Only If) 
Assume $\nabla^2 J(W_ca,a^{-1}S_c)$ has a negative eigenvalue. 
Then by Lemma \ref{lem:g2} there exists $i\in \Int 1 r$ such that 
$u_i$ not a column in $\bar U,$ and $j\in \Int 1k$ such that  $\lambda_j <\sigma_i.$ 
For some $s\in \Int 1m,$ $\bar u_j = u_s$ and hence $\lambda_j = \sigma_s < \sigma_i$. Thus $i < s$. So $u_i$ has been omitted from $\bar U$ and $\sigma_i$ from the diagonal of $\Lambda$. Yet $\sigma_s <\sigma_i$ is included in the diagonal of $\Lambda$.
It follows that the diagonal of $\Lambda$ does not contain a set of $k$ largest singular values of $X$. Hence there is a least $p\in \Int 1k$ such that $\lambda_p <\sigma_p$. \\
(If) Assume that for some $j\in \Int 1 k$, $\lambda_j < \sigma_j$.
Then there exists a least $p$ with $\lambda_p < \sigma_p$. 
Thus for $j\in \Int 1 {p-1}$, $\lambda_j = \sigma_j$ and a corresponding 
left singular vector $u_j$ occupies column $j$ of $\bar U$.  But $\bar u_p$ is
a left singular vector for a singular value $\lambda_p <\sigma_p$. 
Hence a corresponding left singular vector $u_p$ for $\sigma_p$ 
has been omitted from $\bar U$. Thus there exists a
left singular vector $u_p$,  such that $u_p$ is not a column of 
$\bar U =\vzero$, and an integer $j=p\in \Int 1k,$ such that 
$\lambda _j < \sigma_p$.
\end{proof}

By Lemma \ref{lem:eigorder}, $(W_c,S_c)$ is a strict saddle if and only if it is 
\missatp.
We now determine the least eigenvalue of 
$\nabla^2J(W_ca, a^{-1} S_c)$ 
when $(W_c,S_c)$ is a strict saddle.

\begin{theorem}\label{thm:H_mineig}
Assume $(W_c,S_c)$ is \missatp\ and $p$ is the least integer
for which $\lambda_p < \sigma_p$. Then 
\begin{align}\label{eq:lambdaminq=k}
\lambda_{\min} (\nabla^2J(W_ca, a^{-1} S_c)) 
&=\textstyle
\frac{1}{2} \left (\frac{\lambda_k^2 +a^4}{a^2} 
		- \sqrt{\left(\frac{\lambda_k^2 - a^4}{a^2}\right)^2 + 4\sigma_p^2} 
		\right ) <0
\end{align}
\end{theorem}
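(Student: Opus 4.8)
The plan is to assemble $\lambda_{\min}(\nabla^2 J(W_ca,a^{-1}S_c))$ from the complete list of eigenvalues produced by Lemmas~\ref{lem:g1}--\ref{lem:g3}, and then to minimize over that list. Lemma~\ref{lem:g1} and part (b) of Lemmas~\ref{lem:g2} and~\ref{lem:g3} contribute only the nonnegative eigenvalues $a^2$ and $\lambda_j^2/a^2$, so these can be discarded immediately. Part (a) of Lemma~\ref{lem:g3} contributes $0$ and the positive value $\lambda_s^2/a^2+a^2$, again irrelevant. Hence every negative eigenvalue must come from part (a) of Lemma~\ref{lem:g2}, namely from the family
\[
\rho_{i,j} = \tfrac{1}{2}\Bigl(\tfrac{\lambda_j^2+a^4}{a^2} - \sqrt{\bigl(\tfrac{\lambda_j^2-a^4}{a^2}\bigr)^2 + 4\sigma_i^2}\Bigr),
\]
indexed by pairs $(i,j)$ with $i\in\Int 1r$, $u_i$ not a column of $\bar U$, and $\lambda_j<\sigma_i$ (the sign classification in \eqref{eq:rho+} shows this last condition is exactly when $\rho_{i,j}<0$). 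By Lemma~\ref{lem:eigorder} such a pair exists precisely because $(W_c,S_c)$ is \missatp, so the set of negative eigenvalues is nonempty and $\lambda_{\min}<0$.

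The core of the argument is then a two-variable monotonicity analysis of $\rho_{i,j}$ over the admissible index pairs. First I would show $\rho_{i,j}$ is decreasing in $\sigma_i$: writing $\rho = \tfrac12(c - \sqrt{c'^2 + 4\sigma_i^2})$ with $c,c'$ not depending on $\sigma_i$, the derivative in $\sigma_i$ is manifestly negative, so among admissible $i$ we should take $\sigma_i$ as large as possible. Since $p$ is the least index with $\lambda_p<\sigma_p$, the columns $u_1,\dots,u_{p-1}$ (with $\lambda_j=\sigma_j$) all occur in $\bar U$, while $u_p$ is omitted; thus $i=p$ is admissible and gives the largest $\sigma_i$ among admissible indices, so $\sigma_i=\sigma_p$ is optimal. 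Second, holding $\sigma_i=\sigma_p$ fixed, I would examine dependence on $\lambda_j$. The relevant quantity is $g(\lambda_j^2) = \tfrac{\lambda_j^2+a^4}{a^2} - \sqrt{\bigl(\tfrac{\lambda_j^2-a^4}{a^2}\bigr)^2+4\sigma_p^2}$; differentiating with respect to $t=\lambda_j^2$ gives $\tfrac1{a^2}\bigl(1 - \tfrac{(t-a^4)/a^2}{\sqrt{((t-a^4)/a^2)^2+4\sigma_p^2}}\bigr)$, and since the fraction has absolute value $<1$, $g$ is strictly increasing in $\lambda_j^2$. Hence to minimize we want $\lambda_j$ as large as possible subject to $\lambda_j<\sigma_p$ and $j\in\Int 1k$; because the $\lambda_j$ are in decreasing order and $\lambda_p<\sigma_p$, every $j\ge p$ satisfies $\lambda_j\le\lambda_p<\sigma_p$, so the admissible $j$ are exactly $\{p,p+1,\dots,k\}$ and the largest among $\{\lambda_p,\dots,\lambda_k\}$ is $\lambda_p$. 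Wait — we want the \emph{smallest} $\rho$, i.e. the smallest $g$, i.e. the \emph{smallest} $\lambda_j^2$, which is $\lambda_k^2$ (attained at $j=k$). Substituting $i=p$, $j=k$ into $\rho_{i,j}$ yields exactly \eqref{eq:lambdaminq=k}, and the sign classification in \eqref{eq:rho+} (with $\lambda_k\le\lambda_p<\sigma_p$) confirms it is $<0$.

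The main obstacle I anticipate is the bookkeeping in the second monotonicity step: one must be careful that the index $j=k$ is genuinely admissible, i.e. that $\lambda_k<\sigma_p$ and that $u_p$ (the singular vector we pair with) is really absent from $\bar U$. This needs the precise consequences of ``$p$ is least with $\lambda_p<\sigma_p$'': for $j<p$ one has $\lambda_j=\sigma_j$ so those columns are filled by the top singular vectors, forcing $u_p\notin\bar U$, and the ordering $\lambda_k\le\lambda_p<\sigma_p$ then handles admissibility of $j=k$. A secondary subtlety is confirming we have enumerated \emph{all} negative eigenvalues — but the three lemmas together account for all $k(m+n)$ eigenvalues, and we have classified the sign of each, so nothing is missed. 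Everything else is routine calculus on the explicit closed forms.
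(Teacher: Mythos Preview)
Your proposal is correct and follows essentially the same route as the paper's proof: both identify via Lemmas~\ref{lem:g1}--\ref{lem:g3} that the only negative eigenvalues are the $\rho_{i,j}$ from Lemma~\ref{lem:g2}(a), then perform the same two-variable monotonicity analysis (decreasing in $\sigma_i$, increasing in $\lambda_j$) to pin down the minimizing pair $(i,j)=(p,k)$. Your enumeration of why the other lemmas contribute only nonnegative eigenvalues is in fact slightly more explicit than the paper's, and your self-correction in the $\lambda_j$ step lands in the right place.
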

\begin{proof}
Since there exists $j\in \Int 1 k$ with $\lambda_j < \sigma_j$,
Lemma \ref{lem:eigorder} implies $\nabla^2 J(Wa,a^{-1}S)$ has a negative eigenvalue $\rho$, and Lemma \ref{lem:g2} implies
\begin{equation}\label{eq:mag_rho}
\textstyle
2 |\rho| = \sqrt{\left(\frac{\lambda^2 - a^4}{a^2}\right)^2 + 4\sigma^2}
		 - \frac{\lambda^2 +a^4}{a^2}
\end{equation}
where $\lambda \in \{\lambda_j \colon j\in \Int 1 k\}$, 
$\sigma \in \{\sigma_i\colon \sigma_i >\lambda_j, u_i^T \bar U=\vzero\}$.
We have
$$ \textstyle
2 \frac{d|\rho|}{ d \lambda} = \frac{(\frac{2\lambda}{a^2})(\frac{\lambda^2+a^4}{a^2}) }{ \sqrt{\left(\frac{\lambda^2 - a^4}{a^2}\right)^2 + 4\sigma^2}} - \frac{2\lambda}{a^2}
= \frac{2\lambda}{a^2} \left(
\frac{(\frac{\lambda^2+a^4}{a^2}) }{ \sqrt{\left(\frac{\lambda^2 - a^4}{a^2}\right)^2 + 4\sigma^2}} - 1 \right) = \frac{2\lambda}{a^2} \left (\frac{c}{d} -1 \right ).
$$
The terms $c$ and $d$, defined in \eqref{eq:def_cd}, satisfy $c<d$. 
Hence $\frac{d|\rho|}{ d \lambda}$ is negative
and $|\rho|$ is monotone decreasing in $\lambda$.  
It is clear from \eqref{eq:mag_rho} that $|\rho|$ is monotonically
increasing in $\sigma$. 
So we let $p$ be the least index such that $\sigma_p >\lambda_p$. 
Then $\sigma_p>\lambda_p \geq \lambda_k$.
So we select $\lambda =\lambda_k.$ 
This gives \eqref{eq:lambdaminq=k}.
\end{proof}

\begin{corollary}\label{cor:T2_lambda_min}
Under the assumptions of Theorem \ref{thm:H_mineig}, 
\begin{equation}\label{eq:H_eigmin_q=k_2}
\textstyle
\lambda_{\min} (\nabla^2J(W_ca, a^{-1} S_c)) 
= \frac{-(\sigma_p^2 - \lambda_k^2)}{ \left(\frac{\lambda_k^2}{2a^2} + \frac{a^2}{2}\right) +\sqrt{\sigma_p^2 +\left (\frac{\lambda_k^2}{2a^2} - \frac{a^2}{2} \right)^2} }
\end{equation}
\end{corollary}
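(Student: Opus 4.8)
The plan is to treat Corollary~\ref{cor:T2_lambda_min} as a purely algebraic rewriting of the expression \eqref{eq:lambdaminq=k} already proved in Theorem~\ref{thm:H_mineig}; no new facts about the Hessian are needed. First I would write the right-hand side of \eqref{eq:lambdaminq=k} as $\tfrac{1}{2}(c-d)$, where $c$ and $d$ are the quantities from \eqref{eq:def_cd} specialized to $\lambda=\lambda_k$ and $\sigma=\sigma_p$, namely $c=(\lambda_k^2+a^4)/a^2>0$ and $d=\sqrt{((\lambda_k^2-a^4)/a^2)^2+4\sigma_p^2}>0$. Since both are positive, rationalizing gives $\tfrac{1}{2}(c-d)=\tfrac{c^2-d^2}{2(c+d)}$.

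Next I would compute $c^2-d^2$. We have $c^2=(\lambda_k^2+a^4)^2/a^4$ and $d^2=(\lambda_k^2-a^4)^2/a^4+4\sigma_p^2$, so using $(x+y)^2-(x-y)^2=4xy$ with $x=\lambda_k^2$ and $y=a^4$ the first two terms combine to $4\lambda_k^2 a^4/a^4=4\lambda_k^2$, giving $c^2-d^2=4\lambda_k^2-4\sigma_p^2=-4(\sigma_p^2-\lambda_k^2)$. Hence $\lambda_{\min}(\nabla^2 J(W_ca,a^{-1}S_c))=-2(\sigma_p^2-\lambda_k^2)/(c+d)$.

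Then I would identify $c+d$ with twice the denominator in \eqref{eq:H_eigmin_q=k_2}. Indeed $\tfrac{c}{2}=\tfrac{\lambda_k^2}{2a^2}+\tfrac{a^2}{2}$, and $\bigl(\tfrac{d}{2}\bigr)^2=\tfrac14\bigl(\tfrac{\lambda_k^2-a^4}{a^2}\bigr)^2+\sigma_p^2=\sigma_p^2+\bigl(\tfrac{\lambda_k^2}{2a^2}-\tfrac{a^2}{2}\bigr)^2$, so taking the nonnegative root (forced since $d>0$) one gets $\tfrac{d}{2}=\sqrt{\sigma_p^2+\bigl(\tfrac{\lambda_k^2}{2a^2}-\tfrac{a^2}{2}\bigr)^2}$. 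Therefore $c+d=2\bigl[\bigl(\tfrac{\lambda_k^2}{2a^2}+\tfrac{a^2}{2}\bigr)+\sqrt{\sigma_p^2+\bigl(\tfrac{\lambda_k^2}{2a^2}-\tfrac{a^2}{2}\bigr)^2}\bigr]$, and substituting this into the expression from the previous step produces exactly \eqref{eq:H_eigmin_q=k_2}.

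The hard part will be essentially nonexistent: the argument reduces to the identity $(x+y)^2-(x-y)^2=4xy$ together with the observation that the denominator in \eqref{eq:H_eigmin_q=k_2} is nothing but $(c+d)/2$. The only points to keep an eye on are that $a\neq 0$, so all divisions by $a^2$ are legitimate, and that every radicand is manifestly nonnegative, so no branch ambiguity arises. As a sanity check, since $(W_c,S_c)$ is \missatp\ we have $\sigma_p>\lambda_k\ge 0$ (established in the proof of Theorem~\ref{thm:H_mineig}), so $\sigma_p^2-\lambda_k^2>0$ and the right-hand side of \eqref{eq:H_eigmin_q=k_2} is negative, consistent with the strict inequality in \eqref{eq:lambdaminq=k}.
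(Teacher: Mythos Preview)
Your proposal is correct and follows essentially the same approach as the paper: both write $\lambda_{\min}=\tfrac{1}{2}(c-d)$ with the $c,d$ of \eqref{eq:def_cd}, rationalize via $(c-d)(c+d)=c^2-d^2$, compute $c^2-d^2=-4(\sigma_p^2-\lambda_k^2)$, and then identify $(c+d)/2$ with the denominator of \eqref{eq:H_eigmin_q=k_2}. Your write-up is a bit more explicit about the algebra (e.g., invoking $(x+y)^2-(x-y)^2=4xy$), but the argument is the same.
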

\begin{proof}
By Theorem \ref{thm:H_mineig}, 
$\lambda_{\min} (\nabla^2J(W_c a, a^{-1} S_c)) $ is given by \eqref{eq:lambdaminq=k}.
Let $c=\frac{\lambda_k^2 + a^4}{a^2}$
and
$d= \sqrt{\left(\frac{\lambda_k^2 - a^4}{a^2}\right)^2 + 4\sigma_p^2}.$
Then $\lambda_{\min} (\nabla^2J(Wa, a^{-1} S))  = \frac{1}{2} (c-d)$.
Use $c^2-d^2 = (c-d)(c+d)$ to write 
$\lambda_{\min} (\nabla^2J(Wa, a^{-1} S)) 
= \frac{1}{2} \frac{c^2-d^2 }{c+d}.$ 
Noting that $c^2-d^2 = -4(\sigma_p^2 - \lambda_k^2)$ 
and evaluating $1/2(c+d)$ using the definitions of $c$ and $d$ yields   \eqref{eq:H_eigmin_q=k_2}.
\end{proof}

\subsection{A Canonical Point with $\mathbf{q<k}$} \label{sec:cp_q<k}

We now determine the eigenvalues and eigenvectors of the Hessian at a
canonical point $(W_c,S_c)$ with
$W_c = \begin{bmatrix} W& \vzero_{k-q}\end{bmatrix},$
$S_c =\begin{bmatrix} S^T & V_0C_0 \end{bmatrix}^T,$
$k-q>0,$ and $\rank(W)=q.$  
Noting that $(W, S)$ is a full rank canonical point for dimension $q,$
we first ``lift'' the $q(m+n)$ eigenvalues and eigenvectors 
of $\nabla^2 J(W,S)$ to eigenvalues and 
eigenvectors of $\nabla^2 J(W_c,S_c).$

\begin{lemma} \label{lemma:relate-evals}
If $(G,H)$ is an eigenvector of  $\nabla^2J(W,S)$ with eigenvalue $\rho$
then $(\begin{bmatrix}G & \vzero_{k-q} \end{bmatrix},$ $\begin{bmatrix} H^T & \vzero_{k-q}\end{bmatrix}^T)$ is an eigenvector of $\nabla^2J(W_c,S_c)$ with eigenvalue $\rho$.
\end{lemma}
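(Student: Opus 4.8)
The plan is to verify directly from the formula for the Hessian map \eqref{eq:hessianmap} that the lifted pair is an eigenvector with the same eigenvalue, exploiting the block structure of $(W_c,S_c)$. Write $(G,H)$ for the given eigenvector of $\nabla^2 J(W,S)$, and let $\tilde G = \begin{bmatrix} G & \vzero_{m\times(k-q)}\end{bmatrix}$ and $\tilde H = \begin{bmatrix} H^T & \vzero_{(k-q)\times?}\end{bmatrix}^T$ — more precisely $\tilde H$ is $H$ with $k-q$ zero rows appended. Denote the error term at the lifted point by $E_c = W_cS_c - X$. The first observation is that $E_c = WS + \vzero\cdot(C_0^TV_0^T) - X = WS - X = E$, the error at $(W,S)$; this is because the zero block in $W_c$ annihilates the $C_0^T V_0^T$ row block of $S_c$. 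So the lifted point has exactly the same residual as the base point.

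First I would substitute $\tilde G, \tilde H$ into \eqref{eq:hessianmap} and expand each of the two block components. For the $W$-component, $\nabla^2 J(W_c,S_c)[(\tilde G,\tilde H)]$ has first entry $\tilde G S_c S_c^T + W_c \tilde H S_c^T + E_c \tilde H^T$. Using the block forms, $S_c S_c^T$, $W_c$, and $\tilde H$ all interact so that the $k-q$ trailing columns/rows contribute nothing: $\tilde G S_c S_c^T = \begin{bmatrix} G S_c S_c^T|_{\text{top-left }q\times q}\text{ part} & \vzero\end{bmatrix}$, and since the top-left $q\times q$ block of $S_c S_c^T$ is exactly $SS^T$ (the $C_0$ rows live in the bottom block), this equals $\begin{bmatrix} G SS^T & \vzero\end{bmatrix}$. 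Similarly $W_c\tilde H = W H$ and $W_c\tilde H S_c^T = \begin{bmatrix} WHS^T & \vzero\end{bmatrix}$, and $E_c\tilde H^T = E H^T$ placed in the first $m$ rows with nothing in the trailing columns. Summing gives $\begin{bmatrix} GSS^T + WHS^T + EH^T & \vzero\end{bmatrix} = \begin{bmatrix} \big(\nabla^2 J(W,S)[(G,H)]\big)_1 & \vzero\end{bmatrix} = \rho\,\tilde G$, using that $(G,H)$ is an eigenvector.

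Next I would do the analogous computation for the $S$-component, $W_c^T W_c \tilde H + W_c^T \tilde G S_c + \tilde G^T E_c$. Here $W_c^T W_c = \begin{bmatrix} W^T W & \vzero \\ \vzero & \vzero\end{bmatrix}$, $W_c^T \tilde G = \begin{bmatrix} W^T G \\ \vzero\end{bmatrix}$, and $\tilde G^T E_c = \tilde G^T E$, and one checks each term has a zero bottom $(k-q)$-row block and a top $q$-row block equal to the corresponding term of $\nabla^2 J(W,S)[(G,H)]$. So the $S$-component equals $\begin{bmatrix} \big(\nabla^2 J(W,S)[(G,H)]\big)_2 \\ \vzero\end{bmatrix} = \rho\,\tilde H$. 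Combining the two components, $\nabla^2 J(W_c,S_c)[(\tilde G,\tilde H)] = \rho(\tilde G,\tilde H)$, which is the claim. This computation is entirely routine block bookkeeping; there is no real obstacle, the only thing to be careful about is tracking which blocks of $S_c S_c^T$ and $W_c^T W_c$ survive — the key fact, used repeatedly, is that the zero block in $W_c$ kills every interaction with the $C_0^T V_0^T$ portion of $S_c$, so the lifted Hessian action simply reproduces the base Hessian action padded with zeros. (One might also note, as a sanity check, that $\|(\tilde G,\tilde H)\|_F = \|(G,H)\|_F$, so normalized eigenvectors lift to normalized eigenvectors.)
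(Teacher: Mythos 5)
Your overall strategy (direct block computation from \eqref{eq:hessianmap}) is exactly the paper's, and most of the bookkeeping is right, but one asserted identity is false, and it is precisely the one place where the $C_0^TV_0^T$ block can survive. You claim $W_c\tilde H S_c^T = \begin{bmatrix} WHS^T & \vzero\end{bmatrix}$. Writing $S_c^T=\begin{bmatrix}S^T & V_0C_0\end{bmatrix}$, the correct expression is $W_c\tilde HS_c^T = WH\begin{bmatrix}S^T & V_0C_0\end{bmatrix}=\begin{bmatrix}WHS^T & WHV_0C_0\end{bmatrix}$. Here the zero block of $W_c$ is of no help: it has already been consumed in forming $W_c\tilde H=WH$, and the $V_0C_0$ columns of $S_c^T$ then multiply $WH$ from the right. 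So the first component of the lifted Hessian action is $\begin{bmatrix}\rho G & WHV_0C_0\end{bmatrix}$, and the conclusion requires $HV_0C_0=\vzero$ (equivalently $WHV_0C_0=\vzero$, since $W$ has full column rank). This holds for every eigenvector with $H\propto \ve_jv_i^T$ and $i\in\Int 1r$, because then $v_i^TV_0=\vzero$; in particular it holds for every eigenvector carrying a negative eigenvalue. But it fails for the eigenvalue-$a^2$ eigenvectors $(\vzero,H_{j,i})$ of Lemmas \ref{lem:g1}, \ref{lem:g2}(b) and \ref{lem:g3}(b), where $i>r$ and $v_i^TV_0=\ve_{i-r}^T$. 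A minimal counterexample: $X=[1\;\;0]$, $k=2$, $q=1$, $W_c=[1\;\;0]$, $S_c=\bigl[\begin{smallmatrix}1&0\\0&c\end{smallmatrix}\bigr]$ with $c\neq0$; lifting the eigenvector $(0,[0\;\;1])$ of the $q=1$ problem (eigenvalue $1$) gives first component $[0\;\;c]\neq\vzero$, so the lifted vector is not an eigenvector.

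Thus your proof goes through only when $C_0=\vzero$, or when restricted to eigenvectors satisfying $HV_0C_0=\vzero$ (which covers all the negative eigenvalues used in \eqref{eq:lam_min_q<k_not} and Theorem \ref{thm:acp}). To be fair, this gap is inherited from the statement itself: the paper's own one-line proof ("simple algebra") silently treats $S_c$ as $\begin{bmatrix}S^T & \vzero\end{bmatrix}^T$, i.e., assumes $C_0=\vzero$. But since you chose to write out the block algebra, you needed to check this term rather than assert that "the zero block in $W_c$ kills every interaction with the $C_0^TV_0^T$ portion of $S_c$." A second, smaller point: your step $\tilde GS_cS_c^T=\begin{bmatrix}GSS^T & \vzero\end{bmatrix}$ is correct but needs a justification you did not give — the off-diagonal block of $S_cS_c^T$ is $SV_0C_0=\Lambda\bar V^TV_0C_0$, which vanishes by Lemma \ref{lem:S2props}(a), not merely because the $C_0$ rows "live in the bottom block."
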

\begin{proof}
Simple algebra shows that  $(W_c, S_c)$, 
$(\begin{bmatrix}G & \vzero_{k-q} \end{bmatrix},
\begin{bmatrix} H^T & \vzero_{k-q}\end{bmatrix}^T)$ and $\rho$
	satisfy \eqref{eq:Heigvec_0}. 
\end{proof}

We obtain $(k-q)(n-m)$ additional eigenvalues from
Lemma \ref{lem:g1}. Specifically,
for $i\in \Int {m+1} n$ and $j \in \Int {q+1}k,$ 
$(\vzero, H_{j,i})$ is an eigenvector of $\nabla^2J(W_c,S_c)$ with eigenvalue $\rho_0=1$. 

There are $2(k-q)m - 2 (k-q)^2$ remaining eigenvalues. 
Since $C_0^TC_0 \in \R^{(k-q)\by (k-q)}$ is symmetric positive semidefinite, $C_0^TC_0=Z\Omega Z^T$ where $Z \in \OG {k-q}$,
and $\Omega$ is diagonal with the eigenvalues $\omega_j$ of $C_0^TC_0$  
listed in decreasing order on the diagonal.
Let $\tilde z_j = [\vzero, z_j^T ]^T.$ 
Then for $i\in \Int 1 m$ with $u_i$ not in $\bar U,$ and $j\in \Int {q+1} {k},$
set $\GC_{i,j} = u_i \tilde z_j^T$  and $\HC_{j,i} = \tilde z_j v_i^T.$ 

\begin{lemma}\label{lem:qkC0}
For $i\in \Int 1 m$ with $u_i$ not a column in $\bar U,$ 
and $j\in \Int {q+1} {k},$ the following hold:

(a) If $i\in \Int 1 r,$ there exist $\delta_{i,j}, \delta'_{i,j} \in \R$ with 
$\delta_{i,j}\delta'_{i,j}= -1$ such that $(\GC_{i,j}, \delta_{i,j} \HC_{j,i})$ 
and $(\GC_{i,j}, \delta'_{i,j} \HC_{j,i})$ are eigenvectors of 
$\nabla^2 J(W_c, S_c)$ with eigenvalues
$\rho_{i,j}$ and $\rho'_{i,j}$ given in \eqref{eq:rho+-}.

(b) If $i\in \Int {r+1} m,$ the pair of vectors $(\GC_{i,j},  \vzero)$ and 
$(\vzero, \HC_{j,i})$,  are eigenvectors of $\nabla^2J(W_c,S_c)$ 
with eigenvalues  $\omega_j \geq 0,$ and $0,$ respectively.
\end{lemma}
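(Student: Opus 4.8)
The plan is to mimic the argument used for the critical points $(\vzero, C_0^TV_0^T)$ in Theorem~\ref{thm:ev0c0}: the point is that each $\tilde z_j$ lives in the part of $\R^k$ annihilated by $W_c$ and is an eigenvector of the lower block of $S_cS_c^T$. First I would record the structural identities that drive the computation. Since the columns of $\bar V$ are a subset of the columns of $V$, Lemma~\ref{lem:S2props}(a) gives $\Lambda\bar V^TV_0=\vzero$, so that
\[
W_c^TW_c = \begin{bmatrix} I_q & \vzero\\ \vzero & \vzero\end{bmatrix}, \qquad
S_cS_c^T = \begin{bmatrix}\Lambda^2 & \vzero\\ \vzero & C_0^TC_0\end{bmatrix}, \qquad
E \triangleq W_cS_c-X = \bar U\Lambda\bar V^T-X .
\]
Because $\tilde z_j=[\vzero^T,z_j^T]^T$ has zero top block, $W_c\tilde z_j=\vzero$, $W_c^TW_c\tilde z_j=\vzero$, and $\tilde z_j^TS_cS_c^T=\omega_j\tilde z_j^T$; and because $u_i$ is not a column of $\bar U$ (so the matching $v_i$ is not a column of $\bar V$), $u_i^T\bar U=\vzero$ and $\bar V^Tv_i=\vzero$, whence $\bar U\Lambda\bar V^Tv_i=\vzero$, $u_i^T\bar U\Lambda\bar V^T=\vzero$, and for $i\in\Int 1r$ also $u_i^TE=-\sigma_iv_i^T$ and $Ev_i=-\sigma_iu_i$.

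For part (a) I would substitute $(G,H)=(\GC_{i,j},\delta\HC_{j,i})=(u_i\tilde z_j^T,\delta\,\tilde z_jv_i^T)$ into the Hessian map~\eqref{eq:hessianmap}. The identities above collapse every term, leaving the first component equal to $(\omega_j-\delta\sigma_i)\GC_{i,j}$ and the second equal to $-(\sigma_i/\delta)\HC_{j,i}$. Hence $(G,H)$ is an eigenvector with eigenvalue $\rho$ exactly when $\rho=\omega_j-\delta\sigma_i=-\sigma_i/\delta$, which (as $\sigma_i>0$) is the same quadratic $\delta^2-\tfrac{\omega_j}{\sigma_i}\delta-1=0$ solved in the proof of Theorem~\ref{thm:ev0c0}; its roots $\delta_{i,j},\delta'_{i,j}$ satisfy $\delta_{i,j}\delta'_{i,j}=-1$, and feeding them back into $\rho=\omega_j-\delta\sigma_i$ recovers precisely $\rho_{i,j}$ and $\rho'_{i,j}$ of~\eqref{eq:rho+-}. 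Orthogonality of the resulting pair then follows from Lemmas~\ref{lem:orthvects} and~\ref{lem:aap}: the $\tilde z_j$, $j\in\Int{q+1}k$, are orthonormal in $\R^k$, so $\{(\GC_{i,j},\vzero)\}\cup\{(\vzero,\HC_{j,i})\}$ is an orthonormal system, and $\delta_{i,j}\delta'_{i,j}=-1$ guarantees that replacing $(\GC_{i,j},\vzero),(\vzero,\HC_{j,i})$ by $(\GC_{i,j},\delta_{i,j}\HC_{j,i}),(\GC_{i,j},\delta'_{i,j}\HC_{j,i})$ preserves orthogonality. For part (b) I would set $\sigma_i=0$, which additionally gives $u_i^TX=\vzero$, $Xv_i=\vzero$, hence $u_i^TE=\vzero$ and $Ev_i=\vzero$; substituting $(\GC_{i,j},\vzero)$ into~\eqref{eq:hessianmap} leaves only $\GC_{i,j}S_cS_c^T=\omega_j\GC_{i,j}$, so the eigenvalue is $\omega_j$, while substituting $(\vzero,\HC_{j,i})$ annihilates every term, so the eigenvalue is $0$; these two vectors are trivially orthogonal, lying in complementary summands of $\WS$.

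I do not expect a genuine obstacle, since the computation is a near-verbatim repeat of Theorem~\ref{thm:ev0c0} with $z_j$ replaced by its zero-padded lift $\tilde z_j$. The only points needing care are (i) invoking Lemma~\ref{lem:S2props}(a) to kill the off-diagonal blocks of $S_cS_c^T$, so that $\tilde z_j$ is genuinely an eigenvector of $S_cS_c^T$, and (ii) the index bookkeeping: the $k-q$ lifts $\tilde z_j$, $j\in\Int{q+1}k$, are matched to the $k-q$ eigenvalues $\omega_j$ of $C_0^TC_0$, and the hypotheses on $(i,j)$ are exactly what make the cancellations $u_i^T\bar U=\vzero$, $\bar V^Tv_i=\vzero$ available. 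Verifying that these eigenvectors, together with those lifted through Lemma~\ref{lemma:relate-evals} and the $(k-q)(n-m)$ obtained from Lemma~\ref{lem:g1}, exhaust all $k(m+n)$ eigenvalues is a separate counting step carried out after the lemma.
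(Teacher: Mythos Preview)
Your proposal is correct and follows essentially the same route as the paper's proof: record the block structure of $W_c^TW_c$ and $S_cS_c^T$ via Lemma~\ref{lem:S2props}(a), substitute $(\GC_{i,j},\delta\HC_{j,i})$ into the Hessian map~\eqref{eq:hessianmap}, and reduce to the quadratic in $\delta$ already solved in Theorem~\ref{thm:ev0c0}. One minor slip: the second component of the image is $-\sigma_i\HC_{j,i}$, not $-(\sigma_i/\delta)\HC_{j,i}$; your subsequent eigenvalue equation $\rho=-\sigma_i/\delta$ is nonetheless correct, since the $H$-part of the input is $\delta\HC_{j,i}$.
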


\begin{proof}
(a) Write $W=\bar U$ and $S=\Lambda \bar V^T.$ 
Then $W_cS_c = WS= \bar U \Lambda \bar V^T.$ In addition,  
$W_c^TW_c = \left [\begin{smallmatrix} I_q & \vzero\\ \vzero & \vzero\end{smallmatrix}\right ]$ and using Lemma \ref{lem:S2props} part (a), 
$S_cS_c^T = \left [\begin{smallmatrix} \Lambda^2 & \vzero\\ \vzero & C_0^TC_0\end{smallmatrix}\right ].$ 
From \eqref{eq:hessianmap}, the first component of $\nabla^2 J(W_c,S_c)[(\GC_{i,j},\delta \HC_{j,i})]$ is
$
u_i \left [ \vzero \ z_j^T \right] \left [\begin{smallmatrix}\Lambda^2 & \vzero\\ \vzero & Z\Omega Z^T\end{smallmatrix}\right ] + \delta \left [ \bar U \ \vzero \right ]
\left [ \begin{smallmatrix}  \vzero \\ z_j \end{smallmatrix} \right ] v_i^T S_c^T +
\delta (\bar U \Lambda \bar V^T - X) v_i \tilde z_j^T
= (\omega_j - \delta \sigma_i) u_i \tilde z_j^T,$
and the second component is
$\delta \left [ \begin{smallmatrix}  I_q & \vzero\\ \vzero & \vzero \end{smallmatrix}\right] 
\left [ \begin{smallmatrix} \vzero \\ z_j\end{smallmatrix} \right ] v_i^T
+ \left[ \begin{smallmatrix} \bar U^T \\ \vzero\end{smallmatrix}\right ] u_i \tilde z_i^T S_c
+ \tilde z_j u_i^T (\bar U \Lambda \bar V^T - X) = -\sigma_i \bar z_j v_i^T.$
We obtain an eigenvector with eigenvalue $\rho$ if and only if 
$\rho = (\omega_j - \delta \sigma_i) = -\frac{\sigma_i}{\delta}.$
This equation appears in the proof of Theorem \ref{thm:ev0c0} 
and the analysis there yields the result.

(b) The first component of $\nabla^2 J(W_c,S_c)[(\GC_{i,j},\vzero)]$ is
$u_i \left [ \vzero \ \tilde z_j^T \right] \left [\begin{smallmatrix}\Lambda^2 & \vzero\\ \vzero & Z\Omega Z^T\end{smallmatrix}\right ] 
= \omega_j u_i \tilde z_j^T;$
the second is
$\left[ \begin{smallmatrix} \bar U^T \\ \vzero\end{smallmatrix}\right ] u_i \tilde z_i^T S_c
+ \tilde z_j u_i^T (\bar U \Lambda \bar V^T - X) = -\sigma_i \bar z_j v_i^T =\vzero.$
Thus $\nabla^2 J(W_c,S_c)[(\GC_{i,j},\vzero)] = \omega_j (\GC_{i,j},\vzero).$
The first component of $\nabla^2 J(W_c,S_c)[(\vzero, \HC_{i,j})]$ is
$\left[ \bar U \ \vzero\right ] \left [ \begin{smallmatrix} \vzero \\ z_j^T \end{smallmatrix}\right ]
v_i^T S_c^T + (\bar U \Lambda \bar V^T - X)v_i \tilde z_j^T= \vzero;$
the second is
$\left[ \begin{smallmatrix} I_q & \vzero \\ \vzero & \vzero \end{smallmatrix}\right ] 
\left [ \begin{smallmatrix} \vzero \\ z_j^T \end{smallmatrix}\right ] v_i^T
=
\vzero.$ 
Thus $\nabla^2 J(W_c,S_c)[(\vzero, \HC_{i,j})] = 0 (\vzero, \HC_{i,j}).$
\end{proof}

It remains to find $2(k-q)^2$ eigenvalues. 
To do so, consider indices $i\in \Int 1 m$ for which $u_i$ 
is a column of $\bar  U$.
For each column $\bar u_j$ of $\bar U,$ 
$j\in \Int 1q,$  there exists $i \in \Int 1m$
such that $\bar u_j = u_i,$ $\bar v_j = v_i,$ and $\lambda_j = \sigma_i.$ 
We partition the index set $\Int 1q$ into $\cS,$ with $s\in \cS$ 
if $\lambda_s>0$, and $\cT,$ with $t\in \cT$ if $\lambda_t=0.$ 
One can verify that the remaining eigenvalues and eigenvectors of the Hessian have the form given in Lemma \ref{lem:g3} 
for when $k=q$ and $a=1$.
This determines all the eigenvalues.

To find the least eigenvalue of $\nabla^2 J(W_c,S_c)$ we consider two situations. 
First, using the notation defined above, if $(W,S) = (\bar U, \Lambda \bar V^T)$ is \nomiss, the eigenvalues of $\nabla^2 J(W,S)$ are non-negative. Hence the only negative eigenvalues of $\nabla^2 J(W_c,S_c)$ are given in Lemma \ref{lem:qkC0}. 
The least among these is clearly 
\begin{align}
\label{eq:lam_min_q<k}
\lambda_{\min}(\nabla^2 J(W_c,S_c)) = 
\textstyle \frac{\omega_{k-q}}{2} - \sqrt{ \sigma_{q+1}^2 +\left(\frac{\omega_{k-q}}{2}\right)^2}.
\end{align}
Second, if $(W,S)$ is \missatp, then there is a least $p\in \Int 1 q$ with 
$\sigma_p>\lambda_p$. In this case, negative eigenvalues emerge from two sources. First, by ``lifting'' the eigenvectors of $\nabla^2J(W,S)$ (Lemma \ref{lemma:relate-evals} and Theorem \ref{thm:H_mineig}). Second, negative eigenvalues arise because of the existence of zero columns in $W_c$ (Lemma \ref{lem:qkC0}).
The least eigenvalue of $\nabla^2J(W,S)$
is then the minimum eigenvalue from these two sources. 
Using Lemma \ref{lem:qkC0} and Theorem \ref{thm:H_mineig}, 
this can be expressed as 
\begin{align} 
\begin{split}\label{eq:lam_min_q<k_not}
\lambda_{\min}(\nabla^2 J(W_c,S_c)) 
&=  \min \Big \{ \textstyle \frac{\omega_{k-q}}{2} - \sqrt{\sigma_{p}^2 +\left(\frac{\omega_{k-q}}{2}\right)^2}, \\
&\qquad\qquad \shalf \left (\lambda_q^2 +1 
		- \sqrt{\left(\lambda_q^2 - 1\right)^2 + 4\sigma_p^2} 
		\right )
 \Big \}.
 \end{split}
\end{align}
The expression above simplifies when $C_{0} = \vzero.$ 

\begin{theorem}\label{thm:lammin_C0=0}
For a canonical point $(W_c,S_c)$ with $q<k$ and $C_0=\vzero$, 
$$
\lambda_{\min}(\nabla^2J(W_c,S_c)) =
\begin{cases}
-\sigma_{q+1}, & \textrm{if } (W_c,S_c) \textrm{ is \nomiss};\\
-\sigma_p, & \textrm{if } (W_c,S_c) \textrm{ is \missatp};
\end{cases}
$$
where in the second case, $p \in \Int 1 q$ is the least index with 
$\lambda_p < \sigma_p.$
\end{theorem}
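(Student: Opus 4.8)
The plan is to specialize the general formulas for $\lambda_{\min}(\nabla^2 J(W_c,S_c))$ already obtained in \S\ref{sec:cp_q<k} — namely \eqref{eq:lam_min_q<k} in the \nomiss\ case and \eqref{eq:lam_min_q<k_not} in the \missatp\ case — to the situation $C_0=\vzero$. The only structural input needed is that $C_0=\vzero$ forces $C_0^TC_0=\vzero$, so every eigenvalue $\omega_j$ of $C_0^TC_0$ is zero; in particular the relevant quantity $\omega_{k-q}$ in \eqref{eq:lam_min_q<k} and \eqref{eq:lam_min_q<k_not} equals $0$. Everything else is substitution plus one elementary inequality.

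First, the \nomiss\ case. Setting $\omega_{k-q}=0$ in \eqref{eq:lam_min_q<k} gives $\lambda_{\min}(\nabla^2 J(W_c,S_c)) = \tfrac{0}{2} - \sqrt{\sigma_{q+1}^2 + 0} = -\sigma_{q+1}$, as claimed. Next, the \missatp\ case. Setting $\omega_{k-q}=0$ in \eqref{eq:lam_min_q<k_not}, the first entry of the minimum collapses to $-\sqrt{\sigma_p^2}=-\sigma_p$, so
\[
\lambda_{\min}(\nabla^2 J(W_c,S_c)) = \min\Big\{ -\sigma_p,\ \shalf\big(\lambda_q^2+1-\sqrt{(\lambda_q^2-1)^2+4\sigma_p^2}\big)\Big\}.
\]
It remains to show the first argument is the smaller one, i.e.\ $-\sigma_p \le \shalf\big(\lambda_q^2+1-\sqrt{(\lambda_q^2-1)^2+4\sigma_p^2}\big)$.

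To verify this, note first that $\sigma_p>0$: since $p$ is the least index with $\lambda_p<\sigma_p$ and singular values are non-negative, $\sigma_p>\lambda_p\ge 0$. The desired inequality rearranges to $\sqrt{(\lambda_q^2-1)^2+4\sigma_p^2} \le \lambda_q^2+1+2\sigma_p$; both sides are non-negative, so squaring is legitimate and, after cancellation, the inequality becomes $0 \le 4\lambda_q^2(1+\sigma_p)+4\sigma_p$, which holds (strictly, since $\sigma_p>0$). Hence the minimum is attained by the first argument and $\lambda_{\min}(\nabla^2 J(W_c,S_c)) = -\sigma_p$. I do not expect any real obstacle here: the only computation is the radical inequality in this last step, and the only point requiring a moment's care is that $\sigma_p>0$ is genuinely available (it is, by minimality of $p$) so that squaring is valid and the bound $-\sigma_p<0$ is a true negative bound.
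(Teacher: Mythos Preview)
Your proposal is correct and follows essentially the same route as the paper's proof: specialize \eqref{eq:lam_min_q<k} and \eqref{eq:lam_min_q<k_not} by setting $\omega_{k-q}=0$, then in the \missatp\ case rearrange the radical inequality and square both sides to conclude $-\sigma_p$ is the smaller term. Your write-up is in fact slightly more careful than the paper's, since you explicitly justify $\sigma_p>0$ (needed both for the squaring step and for the bound to be genuinely negative) and display the post-cancellation inequality $0\le 4\lambda_q^2(1+\sigma_p)+4\sigma_p$ rather than leaving it implicit.
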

\begin{proof}
When $C_0 = \vzero$, $\omega_j=0$ for $j \in \Int 1{k-q}$. When $(W_c,S_c)$ is \nomiss, using \eqref{eq:lam_min_q<k}, the least eigenvalue is $-\sigma_{q+1}$.
Otherwise, there are two possibilities. We show that $-\sigma_p$ is the least eigenvalue. Let
$
-\sigma_p < \textstyle \frac{\lambda_q^2 +1}{2} 
		- \sqrt{\left(\frac{\lambda_q^2 - 1}{2}\right)^2 + \sigma_p^2}.
$
This is equivalent to 
$\sigma_p +  \frac{\lambda_q^2 +1}{2} >  \sqrt{\left(\frac{\lambda_q^2 - 1}{2}\right)^2 + \sigma_p^2}.$ Squaring both sides, expanding, and eliminating common terms confirms the claimed ordering.
\end{proof}

\section{The Manifold $\mathbf{W^T W - S S^T = C}$ } \label{sec:append-intersection}

\begin{lemma} [{\bf \cite[Theorem 1]{AroraICML2018a} }]
\label{lem:invariance}
Along every solution of the gradient flow o.d.e.,  $W_t^TW_t -S_tS_t^T$ is a constant symmetric $k\by k$ matrix and
$\|W_t\|_F^2 - \|S_t\|_F^2$ is a constant.
\end{lemma}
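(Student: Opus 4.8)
The plan is to differentiate the candidate invariant along a solution of the flow and show that its time derivative vanishes identically. Set $E_t \triangleq W_t S_t - X$, so that by \eqref{eq:gradJ} the gradient flow o.d.e.\ $(\dot W_t, \dot S_t) = -\nabla J(W_t, S_t)$ reads $\dot W_t = -E_t S_t^T$ and $\dot S_t = -W_t^T E_t$. Note the solution is differentiable on its maximal interval of existence because $\nabla J$ is a polynomial, hence smooth, vector field; the argument below is carried out on that interval.

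First I would compute, by the product rule,
\[
\frac{d}{dt}\bigl(W_t^T W_t\bigr) = \dot W_t^T W_t + W_t^T \dot W_t = -S_t E_t^T W_t - W_t^T E_t S_t^T ,
\]
and, likewise,
\[
\frac{d}{dt}\bigl(S_t S_t^T\bigr) = \dot S_t S_t^T + S_t \dot S_t^T = -W_t^T E_t S_t^T - S_t E_t^T W_t .
\]
Subtracting, the two right-hand sides coincide, so $\frac{d}{dt}\bigl(W_t^T W_t - S_t S_t^T\bigr) = 0$. Hence $W_t^T W_t - S_t S_t^T$ is constant along the solution, equal to its value $C$ at $t = 0$. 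It is symmetric because $W_t^T W_t$ and $S_t S_t^T$ are each symmetric.

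The scalar claim then follows by taking traces: since $\|W_t\|_F^2 = \trace(W_t^T W_t)$ and $\|S_t\|_F^2 = \trace(S_t S_t^T)$, we get $\|W_t\|_F^2 - \|S_t\|_F^2 = \trace(W_t^T W_t - S_t S_t^T) = \trace(C)$, a constant. One can also see this directly: $\frac{d}{dt}\|W_t\|_F^2 = 2\ip{W_t}{\dot W_t} = -2\trace(W_t^T E_t S_t^T)$, while $\frac{d}{dt}\|S_t\|_F^2 = 2\ip{S_t}{\dot S_t} = -2\trace(S_t^T W_t^T E_t) = -2\trace(W_t^T E_t S_t^T)$ by the cyclic property of the trace, so the two derivatives agree. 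There is essentially no obstacle here: the only things demanding a little care are transposing correctly so that the four cross terms cancel in pairs rather than reinforce, and observing that constancy is asserted only on the interval where the solution exists. The whole computation is two lines of matrix algebra.
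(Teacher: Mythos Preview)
Your proof is correct. The paper's proof reaches the same conclusion by a slightly different route: instead of substituting $\dot W_t=-E_tS_t^T$ and $\dot S_t=-W_t^TE_t$ directly, it observes that the gradient $\nabla J(W,S)$ is orthogonal to every tangent $(W_tH,-HS_t)$ of the $\GLG k$-orbit (equation \eqref{eq:glg_tang}), which for all $H$ gives $\ip{H}{W_t^T\dot W_t-\dot S_t S_t^T}=0$, hence $W_t^T\dot W_t=\dot S_t S_t^T$; symmetrizing then yields $\frac{d}{dt}(W_t^TW_t-S_tS_t^T)=0$. Your direct substitution is more elementary and self-contained, while the paper's version makes explicit that the invariant is a consequence of the orbit symmetry; computationally they amount to the same two-line identity.
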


\begin{proof} 
Given an initial condition $(W_0, S_0 ) \in \SP,$
the gradient flow o.d.e. $\frac{d}{dt} {(W_t,S_t)} = -  \nabla J(W_t,S_t)$
defines a curve $(W_t, S_t)$, $t\geq 0$, in $\SP$. 
Taking the inner product of both sides of the o.d.e
with $T_{W,S}(H)$ in \eqref{eq:glg_tang}, and using 
$\ip{T_{W,S}(H) }{\nabla J(W,S)} =0$, yields
$H^TW_t^T \dot W_t - S_t^TH^T \dot S_t = \ip{H}{W_t^T\dot W_t - \dot S_t S_t^T} =\vzero$, $\forall H \in \R^{k \times k}$. Thus 
$W_t^T\dot W_t - \dot S_t S_t^T = \vzero$.
Adding this to its transpose gives
$\frac{d}{dt} (W_t^TW_t -S_tS_t^T) = \vzero$. Thus
$W_t^TW_t-S_tS_t^T = W_0^TW_0 - S_0S_0^T$. Now note that 
$\|W_t\|_F^2  -\|S_t\|_F^2 = \trace(W_t^T W_t) - \trace(S_t^TS_t) = 
 \trace(W_t^T W_t - S_tS_t^T)$, and $\trace(W_t^T W_t - S_tS_t^T)$ is a constant. 
\end{proof}

\bibliographystyle{unsrt}
\bibliography{references}

\begin{thebibliography}{10}

\bibitem{hotelling1933analysis}
Harold Hotelling.
\newblock Analysis of a complex of statistical variables into principal
  components.
\newblock {\em Journal of educational psychology}, 24(6):417, 1933.

\bibitem{hotelling1936}
Harold Hotelling.
\newblock Relations between two sets of variates.
\newblock {\em Biometrika}, 28(3/4):321--377, 1936.

\bibitem{jolliffe2011principal}
Ian Jolliffe.
\newblock {\em Principal component analysis}.
\newblock Springer, 2011.

\bibitem{hardoon2004canonical}
David~R. Hardoon, Sandor Szedmak, and John Shawe-Taylor.
\newblock Canonical correlation analysis: An overview with application to
  learning methods.
\newblock {\em Neural computation}, 16(12):2639--2664, 2004.

\bibitem{hyvarinen2000independent}
Aapo Hyv{\"a}rinen and Erkki Oja.
\newblock Independent component analysis: algorithms and applications.
\newblock {\em Neural networks}, 13(4-5):411--430, 2000.

\bibitem{paatero1994positive}
Pentti Paatero and Unto Tapper.
\newblock Positive matrix factorization: A non-negative factor model with
  optimal utilization of error estimates of data values.
\newblock {\em Environmetrics}, 5(2):111--126, 1994.

\bibitem{mairal2009online}
Julien Mairal, Francis Bach, Jean Ponce, and Guillermo Sapiro.
\newblock Online dictionary learning for sparse coding.
\newblock In {\em Proceedings of the 26th annual international conference on
  machine learning}, pages 689--696. ACM, 2009.

\bibitem{baldi1989neural}
Pierre Baldi and Kurt Hornik.
\newblock Neural networks and principal component analysis: Learning from
  examples without local minima.
\newblock {\em Neural networks}, 2(1):53--58, 1989.

\bibitem{Bourlard1988}
Y.~Bourlard, H.and~Kamp.
\newblock Auto-association by multilayer perceptrons and singular value
  decomposition.
\newblock {\em Biological Cybernetics}, 59(4):291--294, Sep 1988.

\bibitem{chen2018landscape}
Zhehui Chen, Xingguo Li, Lin~F. Yang, Jarvis Haupt, and Tuo Zhao.
\newblock On landscape of lagrangian functions and stochastic search for
  constrained nonconvex optimization.
\newblock {\em arXiv preprint arXiv:1806.05151}, 2018.

\bibitem{li2019symmetry}
Xingguo Li, Junwei Lu, Raman Arora, Jarvis Haupt, Han Liu, Zhaoran Wang, and
  Tuo Zhao.
\newblock Symmetry, saddle points, and global optimization landscape of
  nonconvex matrix factorization.
\newblock {\em IEEE Transactions on Information Theory}, 2019.

\bibitem{mohammadi2018stability}
Hesameddin Mohammadi, Meisam Razaviyayn, and Mihailo~R Jovanovi{\'c}.
\newblock On the stability of gradient flow dynamics for a rank-one matrix
  approximation problem.
\newblock In {\em 2018 Annual American Control Conference (ACC)}, pages
  4533--4538. IEEE, 2018.

\bibitem{ge2017no}
Rong Ge, Chi Jin, and Yi~Zheng.
\newblock No spurious local minima in nonconvex low rank problems: A unified
  geometric analysis.
\newblock In {\em Proceedings of the 34th International Conference on Machine
  Learning-Volume 70}, pages 1233--1242. JMLR. org, 2017.

\bibitem{sun2018geometric}
Ju~Sun, Qing Qu, and John Wright.
\newblock A geometric analysis of phase retrieval.
\newblock {\em Foundations of Computational Mathematics}, 18(5):1131--1198,
  2018.

\bibitem{sun2015complete}
Ju~Sun, Qing Qu, and John Wright.
\newblock Complete dictionary recovery over the sphere.
\newblock In {\em 2015 International Conference on Sampling Theory and
  Applications (SampTA)}, pages 407--410. IEEE, 2015.

\bibitem{boumal2016nonconvex}
Nicolas Boumal.
\newblock Nonconvex phase synchronization.
\newblock {\em SIAM Journal on Optimization}, 26(4):2355--2377, 2016.

\bibitem{kawaguchi2016deep}
Kenji Kawaguchi.
\newblock Deep learning without poor local minima.
\newblock In {\em Advances in Neural Information Processing Systems}, pages
  586--594, 2016.

\bibitem{nguyen2017loss}
Quynh Nguyen and Matthias Hein.
\newblock The loss surface of deep and wide neural networks.
\newblock In {\em Proceedings of the 34th International Conference on Machine
  Learning-Volume 70}, pages 2603--2612. JMLR. org, 2017.

\bibitem{hardt2016identity}
Moritz Hardt and Tengyu Ma.
\newblock Identity matters in deep learning.
\newblock {\em "arXiv preprint arXiv:1611.04231"}, 2016.

\bibitem{ge2017learning}
Rong Ge, Jason~D. Lee, and Tengyu Ma.
\newblock Learning one-hidden-layer neural networks with landscape design.
\newblock {\em arXiv preprint arXiv:1711.00501}, 2017.

\bibitem{ge2015escaping}
Rong Ge, Furong Huang, Chi Jin, and Yang Yuan.
\newblock Escaping from saddle points? online stochastic gradient for tensor
  decomposition.
\newblock In {\em Conference on Learning Theory}, pages 797--842, 2015.

\bibitem{ge2016matrix}
Rong Ge, Jason~D Lee, and Tengyu Ma.
\newblock Matrix completion has no spurious local minimum.
\newblock In {\em Advances in Neural Information Processing Systems}, pages
  2973--2981, 2016.

\bibitem{bhojanapalli2016global}
Srinadh Bhojanapalli, Behnam Neyshabur, and Nati Srebro.
\newblock Global optimality of local search for low rank matrix recovery.
\newblock In {\em Advances in Neural Information Processing Systems}, pages
  3873--3881, 2016.

\bibitem{lee2016gradient}
Jason~D Lee, Max Simchowitz, Michael~I Jordan, and Benjamin Recht.
\newblock Gradient descent only converges to minimizers.
\newblock In {\em Conference on Learning Theory}, pages 1246--1257, 2016.

\bibitem{jin2017escape}
Chi Jin, Rong Ge, Praneeth Netrapalli, Sham~M Kakade, and Michael~I Jordan.
\newblock How to escape saddle points efficiently.
\newblock In {\em Proceedings of the 34th International Conference on Machine
  Learning-Volume 70}, pages 1724--1732. JMLR. org, 2017.

\bibitem{AroraICML2018a}
S.~Arora, N.~Cohen, and E.~Hazan.
\newblock On the optimization of deep networks: Implicit acceleration by
  overparameterization.
\newblock In {\em Proc. of the 35th International Conference on Machine
  Learning}, pages 244--253. PMLR, 10--15 Jul 2018.

\bibitem{DuNIPS2018}
Simon~S. Du, Wei Hu, and Jason~D. Lee.
\newblock Algorithmic regularization in learning deep homogeneous models:
  Layers are automatically balanced.
\newblock In S.~Bengio, H.~Wallach, H.~Larochelle, K.~Grauman, N.~Cesa-Bianchi,
  and R.~Garnett, editors, {\em Advances in Neural Information Processing
  Systems 31}, pages 384--395. Curran Associates, Inc., 2018.

\bibitem{Horn2013}
Roger~A. Horn and Charles~R. Johnson.
\newblock {\em {Matrix Analysis}}.
\newblock Cambridge University Press, 2nd edition, 2013.

\end{thebibliography}

\end{document}